\newtheorem{thm}{Theorem}[section]
\newtheorem{lem}[thm]{Lemma}
\newtheorem{prop}[thm]{Proposition}
\theoremstyle{definition}
\theoremstyle{remark}
\newtheorem{rem}[thm]{Remark}
\numberwithin{equation}{section}
\newcommand{\abs}[1]{\left\lvert#1\right\rvert}
\newcommand{\ind}[1]{\mathbf{1}_{\{#1\}}}
\renewcommand{\P}{\mathbb{P}}
\newcommand{\E}{\mathbb{E}}
\newcommand{\R}{\mathbb{R}}
\newcommand{\T}{\mathbb{T}}
\newcommand{\N}{\mathbb{N}}
\newcommand{\J}{\mathbb{J}}
\newcommand{\K}{\mathbb{K}}
\newcommand{\I}{\mathscr{I}}
\newcommand{\F}{\mathscr{F}}
\newcommand{\D}{\mathscr{D}}
\newcommand{\KM}{K_{\mathrm{max}}}
\begin{document}
\title{
    {Exact convergence rates in central limit theorems for  a branching random walk   with a   random  environment in time\thanks{The project is partially supported
by the National Natural Science Foundation of China (NSFC, Grants No. 11101039, No. 11171044, No. 11271045, and No. 11401590), by a cooperation program
between NSFC and CNRS of France (Grant No. 11311130103), by the Fundamental Research Funds for the Central Universities (2013YB52) and by Hunan Provincial Natural Science Foundation of China
(Grant No. 11JJ2001).} }%
      } 

\author{Zhiqiang Gao\thanks
 {School of Mathematical Sciences, Laboratory of Mathematics and Complex Systems, Beijing Normal University, Beijing 100875, P. R. China (gaozq@bnu.edu.cn)},   Quansheng Liu \thanks{Corresponding author,  Univ. Bretagne-Sud, CNRS UMR 6205, LMBA, campus de Tohannic, F-{56000} Vannes, France  and
 Changsha University of Science and Technology, School of Mathematics and Computing Science, Changsha {410004}, China(quansheng.liu@univ-ubs.fr)}}%
\date{\today}%

\maketitle

\begin{abstract}
Chen [Ann. Appl. Probab. {\bf 11} (2001), 1242--1262] derived  exact  convergence rates in  a central limit theorem and a local limit theorem for a supercritical branching Wiener process.
We extend Chen's results to a  branching random walk  under weaker moment conditions. For the branching Wiener  process, our results sharpen Chen's
 by relaxing the second moment condition used by Chen to a moment condition of the form $ \E X (\ln^+X )^{1+\lambda}< \infty$.
 In the rate functions that we find for a  branching random walk, we figure out some new terms which didn't appear in Chen's work.
 The results are established in the more general framework, i.e. for a  branching random walk with a random environment in time.
 The lack of the second moment condition for the offspring distribution and the fact that the exponential moment does not exist necessarily for the displacements make the proof  delicate; the difficulty is overcome by a careful analysis of martingale convergence using a truncating argument. The analysis is significantly more awkward due to the  appearance of the random environment.



  \vskip 5pt \noindent \textbf{2000 Mathematics Subject Classification.} Preliminary 60K37, 60J10, 60F05, 60J80.
\vskip 5pt \noindent\textbf{Key Words and phrases.} Branching random walk,  random environment in time, central limit theorems,  convergence rate.
\end{abstract}

\section{Introduction}

The theory of branching random walk has been studied by
many authors. It plays
an important role, and is closely related to many problems arising
in a variety of applied probability setting, including
 branching processes, multiplicative
 cascades, infinite particle systems,  Quicksort algorithms
  and random fractals (see e.g. \cite{Liu98, Liu00}).   For recent developments of the subject, see e.g. Hu and Shi \cite{HS09}, Shi \cite{Shi2012}, Hu \cite{Hu14}, Attia and Barral \cite{Barral14} and the references therein.

In the classical branching random walk, the point processes indexed
by the particles $u$, formulated by the number of its children and
their displacements, have a fixed constant distribution  for all
particles $u$. In reality this distributions may vary from
generation to generation according to a random environment, just as
in the case of a branching process in random environment introduced
in \cite{SmithWilkinson69, AthreyaKarlin71BPRE1, AthreyaKarlin71BPRE2}. In other words, the
distributions themselves may be realizations of a stochastic
process, rather than being fixed.  This property makes the model be closer to the reality compared to the classical branching random walk.
In this paper, we shall consider such a model, called \emph{a branching random walk with a random environment in time} .


Different kinds of branching random walks in  random environments have been introduced and studied in the literature.
 Baillon,  Cl{\'e}ment, Greven and den Hollander \cite{BaillonClementGrevenHollander93,GrevenHollander92PTRF}  considered the case  where the
offspring distribution of a particle situated at $z\in \mathbb{Z}^d$
depends on a random environment indexed by the location $z$, while the moving
mechanism is controlled by a fixed deterministic law.  Comets and Popov  \cite{CometsPopov2007AOP, CometsPopov2007ALEA}  studied  the case where
 both the offspring distributions and the moving laws depend on a random environment indexed by the location.  In the model studied in \cite{BirknerGK05,HuYoshida09,Nakashima11,Yoshida08,CometsYoshida2011JTP},  the offspring distribution of a particle of generation $n$ situated at $z\in \mathbb{Z}^d (d\geq 1)$
depends on a random space-time environment indexed by $\{(z, n)\}$, while each particle performs a  simple symmetric random walk on $d$-dimensional integer lattice $\mathbb{Z}^d (d\geq 1)$.  The model that we study in this paper  is different from those mentioned above.
It should also be mentioned that recently another  different  kind of branching random walks in time inhomogeneous environments has been considered extensively, see e.g.  Fang and Zeitouni (2012, \cite{FangZeitouni2012}), Zeitouni (2012, \cite{Zeitouni2012}) and Bovier and Hartung(2014, \cite{Bovier14}).
The readers may refer to these articles and  references therein for more information.

Denote by $Z_n(\cdot)$ the counting measure which counts the number of particles of generation $n$ situated in a given set.
For the classical branching random walk, a central limit theorem on $Z_n(\cdot)$, first conjectured by Harris (1963, \cite{Harris63BP}),  was shown by Asmussen and Kaplan (1976, \cite{AsmussenKaplan76BRW1,AsmussenKaplan76BRW2}),  and then extended to a general case by Klebaner (1982, \cite{Klebaner82AAP}) and
 Biggins (1990, \cite{Biggins90SPA});  for a  branching Wiener process,  R\'ev\'esz (1994,\cite{Revesz94}) studied  the convergence rates in the central limit theorems and  conjectured the exact convergence rates, which were confirmed by Chen (2001,\cite{Chen2001}).    Kabluchko (2012,\cite{Kabluchko12}) gave an alternative proof of Chen's results under slightly stronger hypothesis.  R\'ev\'esz, Rosen and Shi (2005,\cite{ReveszRosenShi2005}) obtained a large time asymptotic expansion in the local limit theorem  for branching Wiener processes,  generalizing Chen's result.

The  first objective of our present paper is to  extend Chen's results to the branching random walk under weaker moment conditions. In our results  about the exact convergence rate in the central limit theorem and the local limit theorem, the rate functions that we find  include some new terms  which didn't appear in Chen's paper \cite{Chen2001}.
 In Chen's work, the second moment condition was assumed for the offspring distribution. Although the setting we  consider now  is much more general, in our results the second moment  condition will be relaxed to a moment condition  of the form
$\E X (\ln^+ X) ^{1+\lambda} < \infty$ . It has been well known that in branching random walks, such a relaxation is quite delicate. Another interesting aspect is that we do not assume the existence of exponential moments for the moving law, which holds automatically in the case of the branching Wiener process. The lack of the second moment condition (resp. the exponential moment condition) for the offspring distribution (resp. the moving law) makes the proof delicate. The difficulty will be overcome via a careful analysis of the convergence of some associated martingales using truncating arguments.

 The second objective of our paper is to extend the results to the branching random walk with a random environment in time.
This  model first appeared  in Biggins and Kyprianou (2004, \cite[Section 6]{BigginsKyprianou04AAP}),  where a criterion was given for the non-degeneration of the limit of the natural martingale; see also  Kuhlbusch (2004, \cite{Ku04})  for the equivalent form of the criterion on weighted branching processes in random environment.
For $Z_n(\cdot)$ and related quantities on this model, Liu (2007,\cite{Liu07ICCM}) surveyed several limit theorems, including large deviations theorems and a law of large numbers  on the rightmost particle.  In   \cite{GLW14}, Gao, Liu and Wang   showed a central limit theorem on the counting measure $Z_n(\cdot)$ with appropriate norming. Here we  study the convergence rate in the central limit theorem and a local limit theorem for  $Z_n(\cdot)$.   Compared with the classical branching random walk,
 the approach is significantly more difficult due to  the appearance of the random environment.
%
%



The article is organized as follows. In Section \ref{sec2},  we give a rigorous description of the model and   introduce the basic assumptions and notation, then we formulate  our main results  as Theorems \ref{th1} and \ref{th2}. In Section \ref{sec6}, we introduce some notation and recall a theorem on the Edgeworth expansions for sums of independent random variables  used in our  proofs.  We give the proofs of the main theorems in Section \ref{sec3} and \ref{sec4}, respectively. Whilst Section \ref{sec5} will be  devoted to the proofs of the reminders.

\section{Description of the model and the main results}\label{sec2}

\subsection{Description of the model}
We describe the model as follows (\cite{Liu07ICCM,GLW14}).
 \emph{A random environment in time} $\xi=(\xi_n)$ is formulated as a sequence of random variables independent and identically distributed with values in some measurable space $(\Theta,\mathcal{F})$. Each realization of $ \xi_n $ corresponds to two probability distributions: the offspring distribution $p(\xi_n)
 = (p_0(\xi_n), p_1(\xi_n), \cdots ) $ on  $\N = \{0,1, \cdots\}$, and the moving distribution $ G (\xi_n) $ on $\R$.
 Without loss of generality, we can take $\xi_n $ as coordinate functions defined on the product space $(\Theta^{\mathbb{N}}, \mathcal{F}^{\otimes\mathbb{N}})$
equipped with the product law $\tau$  of some probability law $\tau_0$ on $(\Theta, \cal F)$,
 which is invariant and ergodic under the usual shift transformation
 $\theta$ on $\Theta^{\mathbb{N}}$: $\theta(\xi_0,\xi_1,\cdots)= (\xi_1,\xi_2,\cdots) $.

When the environment $\xi=(\xi_n)$ is given, the process can be described as follows.
It
begins at time $ 0$  with one initial particle $\varnothing$ of
generation $0 $ located at $S_{\varnothing} = 0 \in \mathbb{R}$; at time
$1$, it is replaced by $N = N_{\varnothing}$ new particles $ \varnothing i = i( 1\leq i\leq  N) $ of generation
$1$, located at $S_i = L_{\varnothing i} (1\leq i\leq  N),$ where
$N, L_1, L_2,  \cdots $ are mutually  independent, $N$ has the law $p(\xi_0)$,  and each $ L_i$ has the law $G(\xi_0)$.
 In general,
 each particle $u= u_1...u_n$ of generation $n$ is replaced at time $ n + 1 $ by $N_{u} $ new particles $ u i (1\leq i \leq N_u) $ of generation $n+1$,
  with displacements $L_{u i} (1\leq i \leq N_u) $, so that the $i$-th
child  $ u i  $ is located at  $$S_{u i}=S_{u}+ L_{u i},$$ where
$N_u,  L_{u1}, L_{u2}, \cdots  $  are mutually  independent, $N_u$ has the law $ p(\xi_n)$, and each $L_{ui}$  has the same law $G(\xi_n)$. By definition,
  given the environment $\xi$, the random variables $N_u$  and $L_u$, indexed by all the finite sequences $u$ of positive integers, are independent of each other.

For each realization $\xi \in \Theta^\N$ of the environment sequence,
let $(\Gamma, {\cal G},  \mathbb{P}_\xi)$ be the probability space under which the
process is defined (when the environment $\xi$ is fixed to the given realization).  The probability
$\mathbb{P}_\xi$ is usually called \emph{quenched law}.
The total probability space can be formulated as the product space
$( \Theta^{\mathbb{N}}\times\Gamma , {\cal E}^{\N} \otimes \cal G,   \mathbb{P})$,
 where $ \mathbb{P} = \E  (\delta_\xi \otimes \mathbb{P}_{\xi}) $ with $\delta_\xi $ the Dirac measure at $\xi$ and $\E$ the expectation with respect to the random variable $\xi$, so that  for all measurable and
 positive $g$ defined on $\Theta^{\mathbb{N}}\times\Gamma$, we have
  \[\int_{ \Theta^{\mathbb{N}}\times\Gamma } g (x,y) d\mathbb{P}(x,y) = \E  \int_\Gamma g(\xi,y) d\mathbb{P}_{\xi}(y).\]
The total
probability $\P$ is usually called \emph{annealed law}.
The quenched law $\P_\xi$ may be considered to be the conditional
probability of $\P$ given $\xi$. The expectation with respect to $\mathbb{P}$ will still be denoted by $\E$; there will be no confusion for reason of consistence.   The expectation with respect to
$\P_\xi$ will be denoted by $\E_\xi$.

Let $\mathbb{T}$ be the genealogical tree with $\{N_u\}$ as defining elements. By definition, we have:
(a) $\varnothing\in \mathbb{T}$; (b) $ui \in \mathbb{T}$ implies $u\in \mathbb{T}$; (c) if $ u\in \mathbb{T} $, then $ui\in \mathbb{T} $
if and only if $1\leq i\leq N_u $.
 Let $$ \mathbb{T}_n =\{u\in
\mathbb{T} :|u|=n\} $$
 be the set of particles of generation $n$, where $|u|$ denotes the length of the
sequence $u$ and represents the number of generation to which $u$ belongs.
\subsection{Main results}
Let  $Z_n(\cdot)$ be the counting measure of particles of generation $n$:
  for $B \subset \mathbb{R}$,
$$Z_n(B)= \sum_{u\in \mathbb{T}_n}  \mathbf{1}_{ B}(S_u).$$
 Then  $\{ Z_n(\mathbb{R})\}$ constitutes a branching process
in a random environment (see e.g. \cite{AthreyaKarlin71BPRE1,AthreyaKarlin71BPRE2,SmithWilkinson69}).
For $n\geq 0$, let  $\widehat{N}_n$ (resp.  $\widehat{L}_n$)  be a random variable with distribution $p(\xi_n)$ (resp. $G(\xi_n)$) under the law $\P_\xi$, and   define
\begin{equation*}
   m_n= m(\xi_n)= \E_\xi \widehat{N}_n,\quad  \Pi_n = m_0\cdots m_{n-1}, \quad   \Pi_0=1.
\end{equation*}
It is well known that   the normalized sequence  $$W_n=\frac{1}{\Pi_n} Z_n(\mathbb{R}), \quad n\geq 1$$
    constitutes a martingale with respect to the filtration
    $(\mathscr{F}_n)$ defined by
    $$ \mathscr{F}_0=\{\emptyset,\Omega \}, \quad   \mathscr{F}_n =\sigma ( \xi, N_u:|u| < n), \mbox{ for  }n\geq 1. $$
Throughout the paper, we  shall always assume the following conditions:
 \begin{equation}\label{cbrweq1}
    \E \ln m_0>0   \quad { \mathrm{and}}\quad  {\E}\left[\frac{1}{m_0}\widehat{N}_0 \Big(\ln ^+{\widehat{N}_0 }\Big)^{1+\lambda}   \right]<\infty   ,
    \end{equation}
where the value of $\lambda >0$ is to be specified in the hypothesis of the theorems. Under these conditions, the underlying  branching process  $\{ Z_n(\mathbb{R})\}$  is  \emph{supercritical},    $ Z_n(\mathbb{R}) \rightarrow \infty$   with positive probability,
  and the limit
    \begin{equation*}
    W=\lim_n W_n
\end{equation*}
verifies $\E W =1$ and $W>0$  almost surely (a.s.) on the explosion event $\{Z_\infty \rightarrow \infty  \}$ (cf. e.g. \cite{AthreyaKarlin71BPRE2,Tanny1988SPA}).

For $n\geq 0$, define
\begin{eqnarray*}
   && l_n   = \E_\xi \widehat{L}_n,~~
   \sigma_{n}^{(\nu)} =\E_\xi \big(\widehat{L}_n- l_n\big)^\nu, \mbox{ for } \nu\geq 2;\\
    &&
    \ell_n= \sum_{k=0}^{n-1} l_{k},  \quad s_n^{(\nu)} = \sum_{k=0}^{n-1} \sigma_{k}^{(\nu)} , \mbox{ for } \nu\geq 2, \quad
 s_n  = \big(s_n^{(2)}\big)^{\frac{1}{2} }.
\end{eqnarray*}

We  will need   the following conditions on the motion of   particles:
\begin{equation}\label{cbrweq2-3}
      \P \Big( \limsup_{|t|\rightarrow \infty }\big|\E_\xi e^{it\widehat{L}_0}\big| <1 \Big) >0   \quad \mbox{  and } \quad   \E\big(| \widehat{L}_0 |^{\eta } \big) <\infty  , \end{equation}
where the value of $\eta>1 $ is to  be specified  in the hypothesis of the theorems.  The first hypothesis  means that Cram\'er's condition about the  characteristic function of $\widehat{L}_0$  holds with positive probability.




Let $\{N_{1,n}\}$ and $\{N_{2,n}\}$ be two sequences of random variables, defined  respectively by
\begin{equation*}
N_{1,n} = \frac{1}{\Pi_n} \sum_{u\in \T_n} (S_u-\ell_n)\quad \mbox{and} \quad N_{2,n} = s_n^2 W_n- \frac{1}{\Pi_n} \sum_{u\in \T_n} (S_u-\ell_n)^2.
\end{equation*}
We shall prove that they are martingales with respect to
 the filtration $ (\mathscr{D}_n ) $  defined by $$ \mathscr{D}_0=\{\emptyset,\Omega \}, \quad  \mathscr{D}_n  =  \sigma ( \xi, N_u, L_{ui}:  i\geq 1, |u| <
  n),  \mbox{ for  $n\geq 1$}.$$

More precisely,we have the following  propositions.

\begin{prop}\label{th1a} Assume \eqref{cbrweq1} and $\E \big(\ln^- m_0\big)^{1+\lambda}<\infty$ for some $ \lambda>1$,  and
   $ \E\big(| \widehat{L}_0 |^{\eta } \big) <\infty  $ for some  $ \eta>2$.
Then    the sequence $\{(N_{1,n}, \mathscr{D}_n) \}$ is a martingale  and converges a.s.:
    $$V_1:= \displaystyle \lim_{n\rightarrow\infty}N_{1,n}  \mbox{ exists  a.s. in  } \R.
$$
\end{prop}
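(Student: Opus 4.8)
The plan is to verify the martingale identity by a one‑step computation and then to obtain the almost‑sure limit by a truncation of the offspring numbers, splitting each increment into a displacement part controlled in $L^2$ and a branching part controlled by the moment condition $\E[\frac1{m_0}\widehat{N}_0(\ln^+\widehat{N}_0)^{1+\lambda}]<\infty$. Since $\mathscr{D}_n$ records the environment together with all reproduction and displacement data up to generation $n-1$, the set $\T_n$ and the positions $\{S_u:u\in\T_n\}$ are $\mathscr{D}_n$‑measurable, so $N_{1,n}$ is $\mathscr{D}_n$‑measurable. Writing each $v\in\T_{n+1}$ as $v=ui$ with $u\in\T_n$, and using $S_v=S_u+L_{ui}$ and $\ell_{n+1}=\ell_n+l_n$, one gets
\[\sum_{v\in\T_{n+1}}(S_v-\ell_{n+1})=\sum_{u\in\T_n}\Big[N_u(S_u-\ell_n)+\sum_{i=1}^{N_u}(L_{ui}-l_n)\Big].\]
Given $\mathscr{D}_n$ the families $(N_u,L_{u1},L_{u2},\dots)$, $u\in\T_n$, are independent with $\E_\xi N_u=m_n$ and $\E_\xi(L_{ui}-l_n)=0$; taking $\E[\,\cdot\mid\mathscr{D}_n]$ and dividing by $\Pi_{n+1}=m_n\Pi_n$ gives $\E[N_{1,n+1}\mid\mathscr{D}_n]=N_{1,n}$. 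This is the routine part.

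For convergence I would first record the almost‑sure facts furnished by the ergodic theorem and the hypotheses: $W_n\to W$, $\Pi_n\ge e^{(\E\ln m_0-\varepsilon)n}$ eventually (here $\E(\ln^-m_0)^{1+\lambda}<\infty$ guarantees $\E|\ln m_0|<\infty$, so the law of large numbers applies), $s_n^2\sim n\,\E\sigma_0^{(2)}$, and $\Pi_n^{-1}\sum_{u\in\T_n}(S_u-\ell_n)^2=s_n^2W_n-N_{2,n}=O(n)$ a.s., the last bound being obtained in parallel (it is the content of the companion statement for $N_{2,n}$, where $\eta>2$ enters). I then write $N_{1,n+1}-N_{1,n}=A_n+B_n$, where $A_n=\Pi_{n+1}^{-1}\sum_{u\in\T_n}\sum_{i=1}^{N_u}(L_{ui}-l_n)$ is the displacement fluctuation and $B_n=\Pi_{n+1}^{-1}\sum_{u\in\T_n}(N_u-m_n)(S_u-\ell_n)$ the branching fluctuation. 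The displacement part is immediate: a conditional‑variance computation gives $\E_\xi[A_n^2\mid\mathscr{D}_n]=\sigma_n^{(2)}W_n/\Pi_{n+1}$, and since $\Pi_{n+1}$ grows exponentially while $\sigma_n^{(2)}$ has finite mean, $\sum_n\E_\xi[A_n^2\mid\mathscr{D}_n]<\infty$ a.s., so $\sum_nA_n$ converges; only $\eta\ge2$ is used here.

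The branching part $B_n$ is the delicate one, because under $\E[\frac1{m_0}\widehat{N}_0(\ln^+\widehat{N}_0)^{1+\lambda}]<\infty$ the offspring numbers have no second moment. I would truncate at a deterministic level $c_n=e^{\beta n}$ with $0<\beta<\E\ln m_0$, splitting $N_u-m_n$ into the centred truncated part $(N_u\wedge c_n)-\E_\xi(N_u\wedge c_n)$ and a tail. Using $\E_\xi[(N_u\wedge c_n)^2]\le c_n m_n$, the truncated martingale has conditional variance bounded by $c_n(s_n^2W_n-N_{2,n})/\Pi_{n+1}=O(c_n\,n/\Pi_{n+1})$, which is summable because $\beta<\E\ln m_0$; hence the truncated part converges a.s. For the tail I integrate out the offspring first, reducing matters to the almost‑sure finiteness of $\sum_n\Pi_{n+1}^{-1}\E_\xi[(\widehat{N}_n-c_n)^+]\sum_{u\in\T_n}|S_u-\ell_n|$. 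Cauchy--Schwarz bounds $\Pi_{n+1}^{-1}\sum_{u\in\T_n}|S_u-\ell_n|$ by $m_n^{-1}W_n^{1/2}(s_n^2W_n-N_{2,n})^{1/2}=O(\sqrt n)$ a.s., while $m_n^{-1}\E_\xi[(\widehat{N}_n-c_n)^+]$ is a function of $\xi_n$ alone with mean $\E[\frac1{m_0}(\widehat{N}_0-c_n)^+]$; Tonelli then reduces the series to $\sum_n\sqrt n\,\E[\frac1{m_0}(\widehat{N}_0-c_n)^+]\le C\,\E[\frac1{m_0}\widehat{N}_0(\ln^+\widehat{N}_0)^{3/2}]$, which is finite since $\lambda>1$. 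The compensator satisfies the same bound via $|\sum_u(S_u-\ell_n)|\le\sum_u|S_u-\ell_n|$, so $\sum_nB_n$ converges a.s. as well, and $V_1=\lim_nN_{1,n}$ exists.

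The main obstacle is exactly this last step. The absence of a second moment forces the truncation, and the random environment makes it awkward: one cannot take annealed expectations against $\Pi_n^{-1}$, since $\E[1/\Pi_n]=(\E[1/m_0])^n$ may grow exponentially, so the normalisers must be handled pathwise through the ergodic lower bound $\Pi_n\ge e^{(\E\ln m_0-\varepsilon)n}$, while the factor $\E_\xi[(\widehat{N}_n-c_n)^+]/m_n$, being independent of the past, is averaged separately. The truncation level $c_n$ must lie below the exponential growth rate of $\Pi_n$ (for the $L^2$ part) yet be high enough that the discarded mass is summable against the extra $\sqrt n$ produced by the spatial weights $|S_u-\ell_n|$ (for the tail part); balancing these two constraints against the $(\ln^+)^{1+\lambda}$ moment, in the presence of the environment‑induced loss of annealed moment bounds, is the crux of the proof, and is where $\lambda>1$ and $\eta>2$ are consumed.
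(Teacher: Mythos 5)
Your overall architecture --- verify the martingale identity directly, then split each increment into a displacement fluctuation $A_n$ handled purely in $L^2$ and an offspring fluctuation $B_n$ handled by truncating $N_u$ at a deterministic level $c_n=e^{\beta n}$, $0<\beta<\E\ln m_0$ --- is sound and genuinely different from the paper's proof. The paper instead truncates the composite summand $X_u=S_u\big(\tfrac{N_u}{m_n}-1\big)+\tfrac{1}{m_n}\sum_{i=1}^{N_u}L_{ui}$ at the random level $\Pi_n$, proves the quenched log-moment estimate of Lemma \ref{lem6}, $\E_\xi|\widehat X_n|(\ln^+|\widehat X_n|)^{1+\lambda}\le K_\xi n(\cdots)$, and runs a three-series decomposition with respect to $\E_{\xi,\F_n}$; your route avoids that lemma entirely, at the price of needing pathwise control of the spatial weights $\sum_{u\in\T_n}|S_u-\ell_n|$ and $\sum_{u\in\T_n}(S_u-\ell_n)^2$. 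Your Tonelli step for the tail is structurally correct: the random constant in the a.s. spatial bound factors out of a nonnegative environment-only series whose annealed expectation $\sum_n\sqrt n\,\E\big[\tfrac1{m_0}(\widehat N_0-c_n)^+\big]\le C\,\E\big[\tfrac1{m_0}\widehat N_0(\ln^+\widehat N_0)^{3/2}\big]$ is finite for $\lambda>1$.

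There is, however, a genuine gap where you assert $\Pi_n^{-1}\sum_{u\in\T_n}(S_u-\ell_n)^2=s_n^2W_n-N_{2,n}=O(n)$ a.s., calling it ``the content of the companion statement for $N_{2,n}$, where $\eta>2$ enters.'' The companion statement, Proposition \ref{th2a}, is only proved under $\lambda>2$ and $\eta>4$; under the present hypotheses ($\lambda>1$, $\eta>2$) it is not available, so both the $O(\sqrt n)$ bound for $\Pi_n^{-1}\sum_{u\in\T_n}|S_u-\ell_n|$ that you extract by Cauchy--Schwarz and the summability of the truncated conditional variances $\tfrac{c_n}{m_n\Pi_n}\big(s_n^2W_n-N_{2,n}\big)$ rest on an unproved input. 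The gap is reparable without touching the hypotheses: since $\E_\xi\big[\Pi_n^{-1}\sum_{u\in\T_n}(S_u-\ell_n)^2\big]=s_n^2=O(n)$, Markov's inequality plus Borel--Cantelli give the cruder a.s. bound $O(n^{2+\epsilon})$ for any $\epsilon>0$; this still makes the truncated variance series summable (the factor $c_n/\Pi_n$ decays exponentially, and $1/m_n$ grows at most subexponentially because $\E(\ln^-m_0)^{1+\lambda}<\infty$), and it turns the tail series into $\sum_n n^{1+\epsilon/2}\,\E\big[\tfrac1{m_0}(\widehat N_0-c_n)^+\big]\le C\,\E\big[\tfrac1{m_0}\widehat N_0(\ln^+\widehat N_0)^{2+\epsilon/2}\big]$, which is finite once $\epsilon<2(\lambda-1)$ --- exactly where $\lambda>1$ is consumed. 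With that substitution your argument closes; as written, it does not.
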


\begin{prop}
  \label{th2a}
  Assume \eqref{cbrweq1} and   $\E \big(\ln^- m_0\big)^{1+\lambda}<\infty$ for some $ \lambda>2$ ,
   and $ \E\big(| \widehat{L}_0 |^{\eta } \big) <\infty  $ for some  $ \eta>4$.
Then    the sequence $\{(N_{2,n}, \mathscr{D}_n) \}$ is a martingale  and converges a.s.:
    $$V_2 := \displaystyle \lim_{n\rightarrow\infty}N_{2,n}  \mbox{ exists  a.s. in  } \R.
$$
\end{prop}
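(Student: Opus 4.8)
The plan is to verify the martingale identity by a direct one-step computation, and then to obtain almost-sure convergence by splitting the martingale differences into four conditionally centred pieces, each disposed of by a truncation argument calibrated to the available moments.

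For the martingale property, fix $u\in\T_n$ and note that its children satisfy $S_{ui}-\ell_{n+1}=(S_u-\ell_n)+(L_{ui}-l_n)$. Expanding $(S_{ui}-\ell_{n+1})^2$ and using the conditional independence, given $\mathscr{D}_n$ and $\xi$, of $N_u$ and the $L_{ui}$ (so that $\E_\xi[\sum_{i=1}^{N_u}f(L_{ui})\mid\mathscr{D}_n]=m_n\,\E_\xi f(\widehat{L}_n)$ for $\mathscr{D}_n$-measurable multipliers), together with $s_{n+1}^2=s_n^2+\sigma_n^{(2)}$ and $\E_\xi[W_{n+1}\mid\mathscr{D}_n]=W_n$, one finds
\[
\Pi_{n+1}^{-1}\,\E_\xi\Big[\sum_{v\in\T_{n+1}}(S_v-\ell_{n+1})^2\,\Big|\,\mathscr{D}_n\Big]
=\Pi_n^{-1}\sum_{u\in\T_n}(S_u-\ell_n)^2+\sigma_n^{(2)}W_n ,
\]
whence $\E_\xi[N_{2,n+1}\mid\mathscr{D}_n]=N_{2,n}$, and integrating in $\xi$ gives the annealed statement. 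The hypotheses $\E(|\widehat{L}_0|^\eta)<\infty$ with $\eta>4$ and the log-moment condition make every term above integrable, so the manipulation is legitimate.

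The differences are $N_{2,n+1}-N_{2,n}=\Pi_{n+1}^{-1}\sum_{u\in\T_n}D_u$ with
\[
D_u=\underbrace{s_n^2(N_u-m_n)}_{\mathrm{(I)}}
+\underbrace{\Big(\sigma_n^{(2)}N_u-\sum_{i=1}^{N_u}(L_{ui}-l_n)^2\Big)}_{\mathrm{(II)}}
-\underbrace{(S_u-\ell_n)^2(N_u-m_n)}_{\mathrm{(III)}}
-\underbrace{2(S_u-\ell_n)\sum_{i=1}^{N_u}(L_{ui}-l_n)}_{\mathrm{(IV)}} .
\]
Each bracket is $\mathscr{D}_n$-conditionally centred, and given $\mathscr{D}_n$ and $\xi$ the summands over $u\in\T_n$ are independent, so each of the four partial sums is a martingale and it suffices that each converges a.s. For every piece I would fix a threshold $t_k$ (polynomial in $k$ for the displacements, of the form $e^{\delta\Pi_k}$ or polynomial in $k$ for the offspring numbers), split $D_u$ into a truncated, recentred part $\bar D_u$ and a remainder $\tilde D_u$, show that the truncated martingale $\sum_k\Pi_{k+1}^{-1}\sum_u\bar D_u$ converges in $L^2$ by verifying $\sum_k\Pi_{k+1}^{-2}\sum_{u}\E_\xi[\bar D_u^2\mid\mathscr{D}_k]<\infty$, and show that the remainder is eventually zero through a Borel--Cantelli estimate $\sum_k\E\,\#\{u\in\T_k:\text{truncation active}\}<\infty$. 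The geometric growth $\Pi_k^{1/k}\to e^{\E\ln m_0}>1$ a.s. (here $\E\ln m_0>0$ and $\E(\ln^-m_0)^{1+\lambda}<\infty$ are used to control the random normalization $\Pi_k$) makes these series summable, while $\E(|\widehat{L}_0|^\eta)<\infty$ with $\eta>4$ controls the fourth-power displacement terms arising in the conditional second moments of (II) and (IV).

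The genuinely delicate piece is (I), which assembles to the weighted martingale
\[
M_n^{(1)}=\sum_{k=0}^{n-1}s_k^2\,(W_{k+1}-W_k),
\]
a transform of the fundamental martingale $W_n$ by the $\mathscr{D}_k$-measurable weight $s_k^2$, which by the law of large numbers grows linearly, $s_k^2\sim k\,\E\sigma_0^{(2)}$. Because the weight is unbounded, almost-sure convergence of $W_n$ does not suffice: one needs $W_{k+1}-W_k$ to decay fast enough that $\sum_k s_k^2(W_{k+1}-W_k)$ still converges, and this is exactly where the strengthened offspring moment $\E[\tfrac1{m_0}\widehat{N}_0(\ln^+\widehat{N}_0)^{1+\lambda}]<\infty$ with $\lambda>2$ is consumed, the linear weight demanding two extra logarithmic powers beyond the $\lambda>1$ that sufficed for the first-moment martingale $N_{1,n}$ of Proposition~\ref{th1a}. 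I expect this weighted martingale to be the main obstacle, and the origin of the extra terms in the rate functions announced in the introduction; I would handle it by truncating $N_u$ at $t_k$, bounding the resulting quadratic variation $\sum_k (s_k^2)^2\,\Pi_{k+1}^{-2}\sum_{u\in\T_k}\mathrm{Var}_\xi(N_u\wedge t_k\mid\mathscr{D}_k)$ so that the polynomial weight $(s_k^2)^2\sim k^2$ is absorbed by $\Pi_{k+1}^{-2}$, and disposing of the tail via the $(\ln^+)^{1+\lambda}$ moment.
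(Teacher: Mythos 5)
Your martingale verification is correct (it is the paper's own computation), and your decomposition is in fact algebraically identical to the paper's: with $\ell_n=l_n=0$ your $D_u$ equals $m_nX_u$, where $X_u$ is exactly the variable \eqref{Xu2}, so your increment $\Pi_{n+1}^{-1}\sum_{u\in\T_n}D_u$ is the paper's $I_n=\Pi_n^{-1}\sum_{u\in\T_n}X_u$; your truncate/recentre/$L^2$/Borel--Cantelli skeleton is also the paper's three-series argument, with Lemma \ref{lem7} as moment input. The gaps are in the calibration, and as stated they are fatal. The truncation levels you propose for the offspring variables cannot work: under \eqref{cbrweq1} alone $\widehat N_k$ has no second moment, and the only available (and generally sharp) estimates are
\[
\E_\xi\big(\widehat N_k\wedge t\big)^2\le e^{4\lambda}+\frac{t}{(\ln t)^{1+\lambda}}\,\E_\xi \widehat N_k\big(\ln^+\widehat N_k\big)^{1+\lambda},
\qquad
\P_\xi\big(\widehat N_k> t\big)\le \frac{\E_\xi \widehat N_k\big(\ln^+\widehat N_k\big)^{1+\lambda}}{t\,(\ln t)^{1+\lambda}} .
\]
Your quadratic-variation series behaves like $\sum_k (s_k^2)^2\,\Pi_k\Pi_{k+1}^{-2}\,\E_\xi(\widehat N_k\wedge t_k)^2$ and your Borel--Cantelli series like $\sum_k \Pi_k\,\P_\xi(\widehat N_k>t_k)$. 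If $t_k=e^{\delta\Pi_k}$, the first series diverges because the factor $t_k/\Pi_{k+1}$ explodes; if $t_k$ is polynomial in $k$, the second series diverges geometrically because $\Pi_k/t_k$ explodes, so the truncation is active infinitely often and the remainder cannot be discarded. The only choice making both series summable simultaneously is the random, environment-dependent level $t_k\asymp\Pi_k$ --- precisely the paper's truncation $|X_u|\le\Pi_{|u|}$ --- and with that choice your piece (I) costs $\sum_k k^2(\ln\Pi_k)^{-(1+\lambda)}(\cdots)\asymp\sum_k k^{1-\lambda}(\cdots)$, which is indeed where $\lambda>2$ enters your scheme.

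The second gap is that your Borel--Cantelli criterion is annealed: $\sum_k\E\,\#\{u\in\T_k:\text{truncation active}\}<\infty$. The hypotheses of Proposition \ref{th2a} (unlike those of Theorems \ref{th1}--\ref{th2}) do not include $\E m_0^{-\delta}<\infty$, and the annealed sum can be infinite even with the correct threshold $t_k=\Pi_{k+1}$: every usable bound carries the factor $(\ln\Pi_{k+1})^{-(1+\lambda)}$, which is controlled only eventually, $\P$-almost surely, via $\frac1n\ln\Pi_n\to\E\ln m_0>0$; its expectation over environments is dominated by exponentially rare realizations in which $\Pi_{k+1}$ sits near $1$ while $\Pi_k\sim 1/m_k$ is large, and without a negative moment of $m_0$ these contributions need not be summable. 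The argument must be run quenched, exactly as the paper does: one proves $\sum_n\E_\xi|I_n-I_n'|<\infty$, $\sum_n\E_\xi\big(I_n'-\E_{\xi,\F_n}I_n'\big)^2<\infty$ and $\sum_n\E_\xi\big|\E_{\xi,\F_n}I_n'\big|<\infty$ almost surely (not in annealed mean), then concludes under $\P_\xi$. With these two repairs --- truncation at the random level $\Pi_k$ and quenched summability --- your four-piece version does go through, but it then coincides, up to bookkeeping, with the paper's proof, which handles all four pieces at once through the single estimate $\E_\xi|\widehat X_n|(\ln^+|\widehat X_n|)^{1+\lambda}\le K_\xi n^2[\cdots]$ of Lemma \ref{lem7}. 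One last accounting remark: in the paper the factor $n^2$ (hence the need for $\lambda>2$) is produced by the terms containing $S_u^2$ and $S_u\sum_iL_{ui}$; the weighted term $s_n^2(1-N_u/m_n)$, truncated as a product, only costs $n(\ln n)^{1+\lambda}$, i.e.\ $\lambda>1$. So your piece (I) is not intrinsically the delicate one --- it becomes delicate in your scheme only because truncating $N_u$ alone squares the weight $s_k^2$ inside the quadratic variation.
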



Our main results are the following two theorems.   The first theorem concerns the exact convergence rate in the central limit theorem about the counting measure $Z_n$, while the second one is a local limit theorem. We shall use the notation
$$ Z_n(t)=Z_n((-\infty, t]),  \;\;  \phi(t)=\frac{1}{\sqrt{2\pi}}e^{-t^2/2}, \;\;  \Phi(t) = \int_{-\infty}^{t}\phi(x) \mathrm{d}x, \quad t\in \R. $$

\begin{thm}\label{th1}
Assume    \eqref{cbrweq1} for some $\lambda>8$,  \eqref{cbrweq2-3}  for some $ \eta>12$  and $\E m_0^{-\delta}<\infty$ for some $\delta>0.$    Then  for all $ t\in \R$,
 \begin{equation} \label{cbrweq4}
   \sqrt{n}\Big[\frac{1}{\Pi_n} Z_n(\ell_n+s_n t)  - \Phi(t) W\Big] \xrightarrow{n \rightarrow \infty } \mathcal{V}(t)  \quad \mbox {a.s.},
 \end{equation}
 where
 \begin{align*}
   \mathcal{V} (t) =- \frac{ \phi(t)\; V_1 }{  (\E \sigma_0^{(2)})^{1/2} }   +\frac{(\E \sigma_0^{(3)}) \,  (1-t^2)\; \phi(t) \; W }{6 (\E \sigma_0^{(2)})^{3/2} } .
 \end{align*}
\end{thm}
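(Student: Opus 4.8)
The plan is to compare $\Pi_n^{-1}Z_n(\ell_n+s_nt)$ with its conditional expectation given an intermediate generation and to read off the two terms of $\mathcal{V}(t)$ from a Taylor-plus-Edgeworth expansion of that conditional mean, while showing that the conditional fluctuation contributes nothing at the scale $\sqrt{n}$. Concretely, I would fix a level $p=p_n\to\infty$ with $p_n=o(\sqrt{n})$ (but growing fast enough for the martingale-rate estimate below) and condition on $\mathscr{D}_{p}$. Writing each $v\in\T_n$ as the descendant of a unique $u\in\T_{p}$ and using that, given $\mathscr{D}_{p}$, the subtrees issued from the particles of generation $p$ are independent branching random walks in the shifted environment, the many-to-one formula gives
\begin{equation*}
\frac{1}{\Pi_n}\E_\xi\big[Z_n(\ell_n+s_nt)\mid \mathscr{D}_{p}\big]
=\frac{1}{\Pi_{p}}\sum_{u\in\T_{p}}
\P_\xi\Big(\frac{S^{*}-(\ell_n-\ell_{p})}{s_{p,n}}\le \frac{s_nt-(S_u-\ell_{p})}{s_{p,n}}\Big),
\end{equation*}
where $S^{*}=\sum_{k=p}^{n-1}\widehat{L}_k$ is the associated walk over generations $p,\dots,n-1$, with mean $\ell_n-\ell_{p}$, variance $s_{p,n}^2=s_n^2-s_{p}^2$ and third central moment $s_n^{(3)}-s_{p}^{(3)}$. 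The basic split is then
\begin{equation*}
\frac{1}{\Pi_n}Z_n(\ell_n+s_nt)-\Phi(t)W
=\Big(\frac{1}{\Pi_n}Z_n-\frac{1}{\Pi_n}\E_\xi[Z_n\mid\mathscr{D}_{p}]\Big)
+\Big(\frac{1}{\Pi_n}\E_\xi[Z_n\mid\mathscr{D}_{p}]-\Phi(t)W\Big).
\end{equation*}

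Next I would expand the conditional-mean bracket. Applying the Edgeworth expansion recalled in Section~\ref{sec6} to the inner probability, whose validity rests on the Cram\'er-type hypothesis in \eqref{cbrweq2-3} together with the ergodicity of the environment, and then Taylor expanding $\Phi$ around $t$, the argument $\tfrac{s_nt-(S_u-\ell_{p})}{s_{p,n}}$ produces a constant term $\Phi(t)$, a linear term $-\phi(t)(S_u-\ell_{p})/s_{p,n}$ and the skewness correction $\tfrac{s_n^{(3)}-s_p^{(3)}}{6s_{p,n}^3}\phi(t)(1-t^2)$. Summing over $u\in\T_{p}$ and dividing by $\Pi_{p}$, these become $\Phi(t)W_{p}$, $-\phi(t)N_{1,p}/s_{p,n}$ and $\tfrac{s_n^{(3)}-s_p^{(3)}}{6s_{p,n}^3}\phi(t)(1-t^2)W_{p}$ respectively. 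I then invoke the ergodic theorem in the forms $s_n^2/n\to\E\sigma_0^{(2)}$ and $s_n^{(3)}/n\to\E\sigma_0^{(3)}$ (so $s_{p,n}\sim(n\E\sigma_0^{(2)})^{1/2}$ since $p_n=o(n)$), Proposition~\ref{th1a} giving $N_{1,p}\to V_1$, and $W_{p}\to W$. Multiplying by $\sqrt{n}$, the linear term tends to $-\phi(t)V_1/(\E\sigma_0^{(2)})^{1/2}$ and the skewness term to $\tfrac{(\E\sigma_0^{(3)})(1-t^2)\phi(t)W}{6(\E\sigma_0^{(2)})^{3/2}}$, i.e.\ exactly $\mathcal{V}(t)$. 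The quadratic Taylor term is $\tfrac12\phi'(t)\Pi_{p}^{-1}\sum_{u\in\T_p}\big((S_u-\ell_{p})/s_{p,n}\big)^2$; by the identity $\Pi_{p}^{-1}\sum_{u\in\T_{p}}(S_u-\ell_{p})^2=s_{p}^2W_{p}-N_{2,p}$ (which also underlies Proposition~\ref{th2a}) it is of order $s_p^2/s_n^2\asymp p_n/n$, hence vanishes after multiplication by $\sqrt{n}$ precisely because $p_n=o(\sqrt{n})$; the remaining Taylor and Edgeworth remainders are $o(1/\sqrt{n})$ for the same reason.

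The residual term $\sqrt{n}\,\Phi(t)(W_{p_n}-W)$ left in the conditional-mean bracket does not vanish for fixed $p$, which is why $p=p_n$ must grow: choosing $p_n\to\infty$ fast enough and invoking the a.s.\ rate of convergence of the fundamental martingale $W_n\to W$ guaranteed by the moment hypotheses forces $\sqrt{n}(W_{p_n}-W)\to0$. I expect the \emph{main obstacle} to be the first bracket, the conditionally centered fluctuation $\sqrt{n}\,\Pi_n^{-1}\big(Z_n-\E_\xi[Z_n\mid\mathscr{D}_{p_n}]\big)$, which must converge to $0$ a.s. Given $\mathscr{D}_{p_n}$ it is a sum of independent centered subtree contributions, so a conditional-variance estimate would control it; but the absence of a second moment for the offspring law (only $\E X(\ln^+X)^{1+\lambda}<\infty$) and of exponential moments for the displacements makes the naive second-moment bound unavailable. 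The route is a truncation of the offspring numbers (and, if needed, of the displacements): the truncated fluctuation has a conditional variance that decays geometrically in $n$ through the factor $\Pi_{p_n}/\Pi_n$, so that summability and Borel--Cantelli yield the a.s.\ limit $0$, while the mass removed by truncation is controlled by the weighted moment condition in \eqref{cbrweq1}, along the lines of the martingale analysis behind Propositions~\ref{th1a}--\ref{th2a}. Reconciling the two competing requirements on $p_n$ (large enough for the $W_{p_n}-W$ rate, yet $o(\sqrt{n})$ for the quadratic term and the fluctuation) is what forces the strong exponent $\lambda>8$ in the hypotheses.
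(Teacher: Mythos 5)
Your proposal is correct and follows essentially the same route as the paper's proof: the paper conditions at generation $k_n=\lfloor n^\beta\rfloor$ with $\max\{2/\lambda,3/\eta\}<\beta<1/4$, splits the difference into the conditionally centered fluctuation $\mathds{A}_n$, the conditional-mean bracket $\mathds{B}_n$, and $\mathds{C}_n=(W_{k_n}-W)\Phi(t)$, and treats them exactly as you outline --- truncation at $\Pi_{k_n}$ plus a Borel--Cantelli argument (using weighted moments of $W^*=\sup_n W_n$) for $\mathds{A}_n$, the Bai--Zhao Edgeworth expansion plus Taylor expansion together with $N_{1,k_n}\to V_1$ and $W_{k_n}\to W$ for $\mathds{B}_n$, and the rate $W-W_n=o(n^{-\lambda})$ for $\mathds{C}_n$. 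Your accounting of where $\lambda>8$ comes from (the tension between needing $\lambda\beta>2$ and keeping $\beta$ small enough for the Taylor remainders) likewise matches the paper's choice of $\beta$.
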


\begin{thm}\label{th2}
Assume   \eqref{cbrweq1}for some $\lambda>16$, \eqref{cbrweq2-3} for some $ \eta >16$  and $\E m_0^{-\delta}<\infty$ for some $\delta>0.$   Then for any bounded measurable set $A\subset \R$ with   Lebesgue measure  $|A|>0$,
\begin{equation}\label{cbrweq4a}
{n}\Bigg[ \sqrt{2\pi}s_n\Pi_n^{-1} Z_n(A+\ell_n)  - W\int_A e^{-\frac{x^2}{2s_n^2}} dx\Bigg] \xrightarrow{n \rightarrow \infty } \mu(A)   \quad \mbox {a.s.},
\end{equation}
where
$$ \mu(A) = \frac{|A|} { 2 \E \sigma_0^{(2)} } \Big( V_2 + 2 \; \overline{x}_A V_1\Big) + \frac{  |A|    \; c(A) }{8 (\E \sigma_0^{(2)})^{2}}
$$
 with $\displaystyle \overline{x}_A=  \frac{1} { |A| }  \int_A xdx $\;  and
$$ c(A)  =     W \; \E\big(\sigma_0^{(4)}-3\big(\sigma_0^{(2)}\big)^2   \big)
~+ 4 \; ({\E \sigma_0^{(3)} })  (V_1 -\overline{x}_AW   ) -  \frac{5 (\E\sigma_0^{(3)})^2 }{ 3 \; \E \sigma_0^{(2)} }W. $$
\end{thm}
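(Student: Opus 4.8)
The plan is to work on the level of characteristic functions and to read off the limit from the low-order Taylor coefficients of an associated martingale. For $\theta\in\R$ put
\[
\Psi_n(\theta)=\frac{1}{\Pi_n}\sum_{u\in\T_n}e^{i\theta(S_u-\ell_n)},\qquad
\psi_n(\theta)=\prod_{k=0}^{n-1}\E_\xi e^{i\theta(\widehat{L}_k-l_k)} .
\]
The computation behind the martingale property of $W_n$ shows that, for each fixed $\theta$, $W_n(\theta):=\Psi_n(\theta)/\psi_n(\theta)$ is a complex $\mathscr D_n$-martingale, and its Taylor coefficients at $\theta=0$ are exactly the martingales already available: since $\frac1{\Pi_n}\sum_{u\in\T_n}(S_u-\ell_n)^2=s_n^2W_n-N_{2,n}$, the $s_n^2W_n$ part cancels against $\psi_n$ and one gets $W_n(\theta)-W_n=i\theta N_{1,n}+\tfrac{\theta^2}{2}N_{2,n}+o(\theta^2)$, whose coefficients converge a.s.\ to $W,V_1,V_2$ by the convergence of $W_n$ and by Propositions \ref{th1a} and \ref{th2a}. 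Since the law $\mu_n$ of $S_u-\ell_n$ (an $n$-fold convolution recentred at $\ell_n$) has a density for large $n$ under the Cram\'er condition in \eqref{cbrweq2-3}, almost surely no particle sits on $\partial A$, and Fourier inversion together with the substitution $\theta=\tau/s_n$ gives
\[
\sqrt{2\pi}\,s_n\,\Pi_n^{-1}Z_n(A+\ell_n)=\frac{1}{\sqrt{2\pi}}\int_{\R}\widehat{\mathbf 1}_A(\tau/s_n)\,\overline{\Psi_n(\tau/s_n)}\,d\tau ,
\]
where $\widehat{\mathbf 1}_A(\theta)=\int_A e^{i\theta x}\,dx$ (the integral understood in the principal-value sense).

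I would then cut the integral at $|\theta|=\delta$ and analyse the low-frequency part via the expansions $\psi_n(\theta)=\exp\big(-\tfrac{\theta^2}2 s_n^2+\tfrac{(i\theta)^3}6 s_n^{(3)}+\tfrac{(i\theta)^4}{24}\kappa_n+\cdots\big)$ with $\kappa_n=s_n^{(4)}-3\sum_{k<n}(\sigma_k^{(2)})^2$ (this is the Edgeworth information recalled in Section \ref{sec6}), $\widehat{\mathbf 1}_A(\tau/s_n)=|A|+i\tfrac{\tau}{s_n}\int_A x\,dx-\tfrac{\tau^2}{2s_n^2}\int_A x^2\,dx+\cdots$, and $\overline{W_n(\tau/s_n)}=W_n-i\tfrac{\tau}{s_n}N_{1,n}+\tfrac{\tau^2}{2s_n^2}N_{2,n}+\cdots$. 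A key simplification is that the product of $W_n$ with the pure Gaussian factor $e^{-\tau^2/2}$ and with $\widehat{\mathbf 1}_A(\tau/s_n)$ resums exactly to $W_n\int_A e^{-x^2/(2s_n^2)}\,dx$, which up to the negligible term $n(W_n-W)\int_A e^{-x^2/(2s_n^2)}dx\to0$ is precisely the quantity subtracted in \eqref{cbrweq4a}. Hence the contributions to $\mu(A)$ come exactly from the terms carrying at least one cumulant correction of $\psi_n$ or at least one of the fluctuation coefficients $N_{1,n},N_{2,n}$.

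Collecting these $O(1/n)$ terms and evaluating the resulting Gaussian moment integrals $\tfrac1{\sqrt{2\pi}}\int\tau^{2m}e^{-\tau^2/2}d\tau\in\{1,3,15\}$ for $m=1,2,3$, then using the ergodic theorem in the form $s_n^{(\nu)}/n\to\E\sigma_0^{(\nu)}$ and $\kappa_n/n\to\E(\sigma_0^{(4)}-3(\sigma_0^{(2)})^2)$ a.s., reproduces each term of $\mu(A)$: the pair $(N_{2,n},\,|A|)$ gives $\tfrac{|A|}{2\E\sigma_0^{(2)}}V_2$; the cross terms of $N_{1,n}$ with the $\overline{x}_A$-coefficient of $\widehat{\mathbf 1}_A$ and with the cubic cumulant of $\psi_n$ give $\tfrac{|A|\overline{x}_A}{\E\sigma_0^{(2)}}V_1$ and $\tfrac{|A|\E\sigma_0^{(3)}}{2(\E\sigma_0^{(2)})^2}V_1$; and the terms $W_n\kappa_n$, $W_n(s_n^{(3)})^2$, and the cross term of $W_n$'s cubic cumulant with the $\overline{x}_A$-coefficient supply the three pieces of $|A|c(A)/(8(\E\sigma_0^{(2)})^2)$. (The term $W_n\int_A x^2 dx/s_n^2$ is part of the resummed main term and does not contribute, as noted above; the purely linear term is odd and integrates to $0$.) The proof of Theorem \ref{th1} is the same machinery carried to one order lower with scaling $\sqrt n$.

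The main obstacle is the rigorous justification under the stated hypotheses. First, the high-frequency range $|\theta|>\delta$ must be shown to contribute $o(1/n)$ after multiplication by $n\sqrt{2\pi}s_n$: because the Cram\'er condition in \eqref{cbrweq2-3} holds only with positive probability, this needs the ergodicity of $(\xi_k)$ to force $\prod_k|\E_\xi e^{i\theta(\widehat{L}_k-l_k)}|\to0$, a Riemann–Lebesgue argument from $\E|\widehat{L}_0|^\eta<\infty$ for the very large frequencies, and an $L^2$-type bound on $\Psi_n(\theta)=W_n(\theta)\psi_n(\theta)$ uniform in $\theta$. Second, one must control the Taylor remainders (involving $\Pi_n^{-1}\sum_u(S_u-\ell_n)^k$, $k\ge3$) uniformly over the growing window $|\tau|\le\delta s_n$. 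Third, and most delicate, the a.s.\ convergence of $W_n$, $N_{1,n}$, $N_{2,n}$ and the requisite moment bounds must be obtained without a second moment on the offspring law, using only $\E[m_0^{-1}\widehat{N}_0(\ln^+\widehat{N}_0)^{1+\lambda}]<\infty$ with $\lambda$ large; this is handled by truncating the offspring numbers and the displacements and estimating the resulting martingale increments. Finally, the random environment makes $s_n^{(\nu)}$ and $\kappa_n$ random, so the passage from quenched estimates to the almost-sure statement rests on the ergodic theorem together with $\E m_0^{-\delta}<\infty$ and $\E(\ln^-m_0)^{1+\lambda}<\infty$; the strengthened exponents $\lambda>16$ and $\eta>16$ are exactly what make the second-order expansion and all of these martingale estimates simultaneously valid.
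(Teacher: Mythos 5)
Your route --- direct Fourier inversion of the empirical measure combined with the complex Biggins martingale $W_n(\theta)=\Psi_n(\theta)/\psi_n(\theta)$ --- is genuinely different from the paper's proof, which decomposes $Z_n$ at an intermediate generation $k_n=\lfloor n^\beta\rfloor$ via \eqref{cbrweq11} and applies the Bai--Zhao Edgeworth expansion (Lemma \ref{lem-Edge-exp}) to the conditional law of $\sum_{k=k_n}^{n-1}\widehat L_k$; your plan is closer in spirit to Kabluchko's alternative proof cited in the introduction. However, it contains a gap that is fatal as written: the inversion identity itself. The measure $\Pi_n^{-1}\sum_{u\in\T_n}\delta_{S_u}$ is purely atomic, so $\Psi_n$ is an almost periodic function of the frequency with $\limsup_{|\tau|\to\infty}|\Psi_n(\tau)|=W_n>0$ (the phases $\tau S_u$ realign arbitrarily well along a subsequence of $\tau$), while $\widehat{\mathbf 1}_A\notin L^1(\R)$; hence $\int_\R\widehat{\mathbf 1}_A(\tau/s_n)\overline{\Psi_n(\tau/s_n)}\,d\tau$ is not absolutely convergent, the principal-value reading is unjustified, and --- worse --- the high-frequency part cannot be shown to be $o(1/n)$ because it is not even small: the only bound on $\Psi_n$ that holds uniformly in the frequency is $|\Psi_n(\theta)|\le W_n$. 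The decay you invoke concerns $\psi_n(\theta)=\prod_k\E_\xi e^{i\theta\widehat L_k}$ (and there Cram\'er's condition \eqref{cbrweq2-3} plus ergodicity does work), but it does not transfer to $\Psi_n(\theta)=W_n(\theta)\psi_n(\theta)$, since $|W_n(\theta)|$ is uncontrolled precisely where $\psi_n(\theta)$ is small. The two auxiliary claims you use to patch this are also false: Cram\'er's condition does not imply that the law of $S_u$ has a density (it is compatible with singular continuous laws), so ``almost surely no particle sits on $\partial A$'' is unsupported (and for a general bounded measurable $A$ the set $\partial A$ may have positive Lebesgue measure in any case); and $\E|\widehat L_0|^\eta<\infty$ yields no Riemann--Lebesgue decay whatsoever --- that requires absolute continuity, which is not assumed.

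This obstruction is exactly what the paper's architecture avoids: by conditioning at generation $k_n$, all Fourier analysis (with its built-in smoothing) is applied only to the honest probability law of the post-$k_n$ walk, through Proposition \ref{pro4}, where Cram\'er's condition controls the high frequencies; the atomic randomness is confined to generation $k_n$, where it is handled by elementary truncation--martingale estimates (Lemma \ref{lem4-1}, resting on the $W^*$-moment bound of Lemma \ref{lem5}) and enters the limit only through $W_{k_n}$, $N_{1,k_n}$, $N_{2,k_n}$ and the rate $W-W_{k_n}=o(k_n^{-\lambda})$ of Proposition \ref{pro3}. To salvage a direct Fourier proof you would have to either smooth $\mathbf 1_A$ (a Berry--Esseen-type smoothing inequality) and control the smoothing error for the \emph{atomic} measure --- which is essentially as hard as the theorem itself --- or reinsert a conditioning step at an intermediate generation, at which point you have reconstructed the paper's proof. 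I note that your bookkeeping of the limiting coefficients (which terms produce $V_2$, $\overline x_A V_1$, and the three pieces of $c(A)$) does match the paper's Lemma \ref{lem4-2}, so the algebra of the expansion is right; it is the analytic foundation that is missing.
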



\begin{rem}
For a branching Wiener process,
 Theorems \ref{th1} and \ref{th2} improve  Theorems 3.1 and 3.2 of Chen (2001,\cite{Chen2001}) by relaxing the second moment condition used by Chen to the
 moment condition of the form $ \E X (\ln^+ X)^{1+\lambda} < \infty$  (cf.  (\ref{cbrweq1})). For a branching random walk with a constant or random  environment,
 the second terms in $\mathcal{V}(\cdot)$ and $\mu(\cdot)$ are new: they did not appear
 in Chen's results \cite{Chen2001} for a branching Wiener process; the reason is that in the case of a Brownian motion, we have $\sigma_0^{(3)}= \sigma_0^{(4)}-3\big(\sigma_0^{(2)}\big)^2=0$.

\end{rem}
\begin{rem} As will be seen in the proof,
  if we assume   an exponential moment condition for the motion law, then the  moment condition on the underlying branching mechanism  can be weakened: in that case, we only need to assume that  $\lambda>3/2$ in Theorem \ref{th1} and $\lambda >4$ in Theorem \ref{th2}.  In particular, for a branching Wiener process, Theorem \ref{th1} (resp. Theorem \ref{th2} ) is valid when  (\ref{cbrweq1}) holds for some $\lambda>3/2$  (resp. $\lambda >4$).

\end{rem}

\begin{rem} \label{rem-lattice}
When the  Cram\'er condition $ \P \Big( \limsup_{|t|\rightarrow \infty }\big|\E_\xi e^{it\widehat{L}_0}\big| <1 \Big) >0 $ fails,
the situation is different. Actually, while revising our manuscript we find that   a lattice version (about a branching random walk on $\mathbb{Z}$ in a constant environment,  for which the preceding condition fails) of Theorems \ref{th1} and \ref{th2}  has been established very recently in \cite{GK2015}.

\end{rem}

For simplicity and without loss of generality, hereafter we always assume that $l_n=0$ (otherwise, we only need to replace  $L_{ui}$ by $L_{ui}-l_n$) and hence $\ell_n=0$. In the  following, we will write $K_\xi $ for a constant depending on the environment,  whose value may vary from lines to lines.

\section{Notation and Preliminary results}\label{sec6}
In this section, we  introduce some notation  and important lemmas which will be used in the sequel.
\subsection{Notation}\label{sec3.1}

In addition to the $\sigma-$fields $\mathscr{F}_n$ and  $\mathscr{D}_n$,  the following  $\sigma$-fields  will also be used:
\begin{eqnarray*}
  \mathscr{I}_0=\{\emptyset,\Omega \}, \quad  \mathscr{I}_n &=&   \sigma ( \xi_k, N_u, L_{ui}: k<n, i\geq 1, |u| <
  n) \mbox{ for  $n\geq 1$}.
\end{eqnarray*}
For conditional probabilities and  expectations, we write:
\begin{eqnarray*}
 &&\P_{\xi, n}(\cdot ) = \P_\xi(\cdot | \D_n), \quad   \E_{\xi,n}(\cdot )= \E_\xi(\cdot | \D_n);\quad \P_{n}(\cdot )= \P(\cdot | \I_n), \quad  \E_{n}(\cdot )= \E(\cdot | \I_n);
 \\  && \P_{\xi, \mathscr{F}_n}(\cdot ) = \P_\xi(\cdot | \F_n), \quad   \E_{\xi,\F_n}(\cdot )= \E_\xi(\cdot | \F_n)  .
\end{eqnarray*}
As usual, we set $\N^* = \{1,2,3,\cdots \}$ and denote by
$$ U= \bigcup_{n=0}^{\infty} (\N^*)^n $$
the set of all finite sequences, where $(\N^*)^0=\{\varnothing \}$ contains the null sequence $ \varnothing$.

For all $u\in U$, let $\mathbb{T}(u)$ be the shifted tree of $\mathbb{T}$ at $u$  with defining elements $\{N_{uv}\}$: we have
1) $\varnothing \in \mathbb{T}(u)$, 2) $vi\in \mathbb{T}(u)\Rightarrow v\in \mathbb{T}(u)$ and  3) if  $v\in \mathbb{T}(u)$, then $vi\in \mathbb{T}(u)$ if and only if $1\leq i\leq N_{uv} $. Define $\mathbb{T}_n(u)=\{v\in \mathbb{T}(u): |v|=n\}$. Then  $\mathbb{T}=\mathbb{T}(\varnothing)$ and $\mathbb{T}_n=\mathbb{T}_n(\varnothing)$.

For every integer $m\geq 0$,  let  $H_m$ be the Chebyshev-Hermite polynomial of degree $m$ (\cite{Petrov75}):
\begin{equation}\label{eqCH}
   H_{m}(x)=m! \sum_{k=0}^{\lfloor \frac{m}{2}\rfloor} \frac{(-1)^k x^{m-2k}}{ k!(m-2k)! 2^k}.
\end{equation}
The first few Chebyshev-Hermite polynomials relevant to us are:
\begin{eqnarray*}
   & &  H_0(x)=1,  \\
    & & H_1(x) =x,\\
   & &  H_2(x)=x^2-1,\\
  &&  H_3(x)= x^3-3x,  \\
    & & H_4(x)= x^4-6x^2+3, \\
   && H_5 (x)= x^5-10x^3+15x, \\
    && H_6(x)= x^6-15x^4+45x^2-15,  \\
    &&   H_7(x)= x^7-21x^5+105x^3-105x, \\
    &&   H_8(x)= x^8-28x^6+210x^4-420x^2+105.
\end{eqnarray*}It is known that (\cite{Petrov75}) :    for every integer $m\geq 0$
\begin{equation*}
 \Phi^{(m+1)}(x) = \frac{d^{m+1}}{dx^{m+1}}\Phi(x)= (-1)^m\phi(x)H_m(x).
\end{equation*}

\subsection{Two preliminary  lemmas}
We first give an  elementary lemma which will be often used in Section  \ref{sec5}.
\begin{lem}\label{lemma2-1}
 \begin{enumerate}
   \item[(a)]  For $x,y \geq 0$,
   \begin{equation}\label{cbrw2.1}
         \ln^+(x+y) \leq 1+\ln^+x+ \ln^+y , \qquad  \ln(1+x) \leq 1+\ln^+x.
      \end{equation}
 \item[(b)] For each $\lambda >0$, there exists a constant $K_\lambda>0$, such that   \begin{equation}\label{cbrw3.4}
(\ln^+ x) ^{1+\lambda} \leq   K_\lambda  x,  \ \  x>0,
  \end{equation}
\item[(c)] For each $\lambda >0$, the function
 \begin{equation}\label{cbrw2.3}
  ( \ln(e^\lambda+ x) )^{1+\lambda} \;\; \mbox{ is concave for } \;  x>0.
  \end{equation}
  \end{enumerate}
\end{lem}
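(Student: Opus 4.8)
The plan is to treat the three parts independently; each reduces to an elementary estimate, with only part (c) requiring an actual differentiation. No machinery beyond the monotonicity of $\ln$ and the basic growth comparison between polynomials and exponentials is needed.

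For part (a), I would argue by splitting according to whether the argument exceeds $1$. For the second inequality: if $0\le x\le 1$ then $\ln^+x=0$ and $\ln(1+x)\le\ln 2\le 1$; if $x>1$ then $1+x\le 2x$ gives $\ln(1+x)\le\ln 2+\ln x\le 1+\ln^+x$. For the first inequality I would use $x+y\le 2\max(x,y)$: when $x+y\le 1$ the left-hand side is $0$, and when $x+y>1$ one has $\ln^+(x+y)\le\ln 2+\ln^+\max(x,y)\le 1+\ln^+x+\ln^+y$, using $\ln 2\le 1$ together with $\ln^+\max(x,y)=\max(\ln^+x,\ln^+y)\le\ln^+x+\ln^+y$.

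For part (b), the bound is immediate for $0<x\le 1$, where $\ln^+x=0$. For $x>1$ I would substitute $x=e^u$ with $u>0$, so that $(\ln^+x)^{1+\lambda}/x=u^{1+\lambda}e^{-u}$; since $e^{-u}$ decays faster than any power of $u$ grows, this quotient is continuous and bounded on $(0,\infty)$, and taking $K_\lambda:=\sup_{u>0}u^{1+\lambda}e^{-u}<\infty$ yields $(\ln^+x)^{1+\lambda}\le K_\lambda x$.

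Part (c) is the only part needing a genuine computation, and it is where I expect the one point requiring care. Writing $h(x)=\ln(e^\lambda+x)$, so that $h'(x)=(e^\lambda+x)^{-1}$ and $h''(x)=-(e^\lambda+x)^{-2}$, and $g(x)=h(x)^{1+\lambda}$, a direct calculation gives
$$ g''(x)=(1+\lambda)\,h(x)^{\lambda-1}\,\frac{\lambda-h(x)}{(e^\lambda+x)^{2}}. $$
The decisive observation, and the reason the base is taken to be $e^\lambda$ rather than $1$, is that for $x>0$ we have $e^\lambda+x>e^\lambda$, hence $h(x)=\ln(e^\lambda+x)>\lambda$, so the factor $\lambda-h(x)$ is strictly negative while $(1+\lambda)$, $h(x)^{\lambda-1}$, and $(e^\lambda+x)^{-2}$ are all positive. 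Therefore $g''(x)<0$ on $(0,\infty)$ and $g$ is concave. The main (indeed only) thing to watch is this sign, which is precisely what the shift by $e^\lambda$ secures; without it the bracket would change sign near the origin and concavity would fail.
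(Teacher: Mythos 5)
Your proof is correct and matches the paper's approach: part (a) uses the same $x+y\le 2\max\{x,y\}$ bound that the paper invokes, and parts (b) and (c), which the paper dismisses as easily verified, are carried out by you via the standard substitution $x=e^u$ and the routine second-derivative computation, with the key sign observation $h(x)=\ln(e^\lambda+x)>\lambda$ correctly identified. Nothing further is needed.
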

\begin{proof}
 Part (a) holds since    $\ln^+(x+y) \leq \ln^+(2 \max\{x,y\}) \leq  1+\ln^+x+ \ln^+y$.
Parts (b) and  (c)  can be verified easily.
\end{proof}

We next present
 the Edgeworth expansion  for sums of independent random variables, that we shall need  in Sections  \ref{sec3} and \ref{sec4}   to prove the main theorems.    Let us recall the theorem used in this paper  obtained by Bai and Zhao(1986, \cite{BaiZhao1986}),  that generalizing the case for i.i.d random variables (cf. \cite[P.159, Theorem 1]{Petrov75}).

Let $\{X_j\}$ be  independent random variables,  s atisfying for each $j\geq 1$
\begin{equation}\label{cbrwa1}
   \E X_j=0, \E |X_j|^{k} <\infty   \mbox{  with some integer   }  k \geq 3.
\end{equation}
We write  $B_n^2 = \sum_{j=1}^{n} \E X_j^2$ and only consider the nontrivial case $B_n>0$.
Let $\gamma_{\nu j}$  be  the $\nu$-order cumulant of $X_j$  for each $j\geq1$.
Write
\begin{align*}
    &  \lambda_{\nu,n }= n^{(\nu-2)/2} B_n^{-\nu}  \sum_{j=1}^n \gamma_{\nu j},  \quad {\nu=3,4\cdots, k}; \\
    &  Q_{\nu,n}(x)=  \sum { }^{'}(-1)^{\nu+2s}\Phi^{(\nu+2s)}(x) \prod_{m=1}^{\nu} \frac{1}{k_m!} \bigg(\frac{\lambda_{m+2,n}}{(m+2)!}\bigg)^{k_m}
    \\ & \qquad \quad =- \phi(x)\sum { }^{'}  H_{\nu+2s-1}(x)\prod_{m=1}^{\nu} \frac{1}{k_m!} \bigg(\frac{\lambda_{m+2,n}}{(m+2)!}\bigg)^{k_m},
\end{align*}
where the summation  $ \sum { }^{'}$  is carried out over all  nonnegative integer solutions $(k_1, \dots, k_\nu  )$  of the equations:
\begin{equation*}
   k_1+\cdots+  k_\nu=s \quad   \mbox{ and  } \quad  k_1+2k_2+\cdots +\nu k_{\nu}=\nu.
\end{equation*}
 For   $ 1\leq j\leq n$   and   $x\in \R$, define
\begin{align*}
    & F_n(x)= \P \Big (  {B_n}^{-1}  \sum_{j=1}^n X_j  \leq x\Big),    \quad v_j(t) = \E e^{itX_j}; \\
    &Y_{nj}= X_j \mathbf{1}_{\{ |X_j| \leq B_n\}},  \quad    Z_{nj}^{(x)}= X_{j }\mathbf{1}_{ \{|X_j| \leq B_n(1+|x|)\}}, \quad W_{nj}^{(x)}= X_{j }\mathbf{1}_{ \{|X_j| > B_n(1+|x|)\}}.
\end{align*}
The  Edgeworth expansion  theorem can be stated as follows.
\vspace{2mm}

\begin{lem} [\cite{BaiZhao1986}] \label{lem-Edge-exp}
  Let $n\geq 1$ and $X_1, \cdots, X_n$ be a sequence of  independent random variables satisfying    \eqref{cbrwa1}  and $ B_n>0$.   Then for the  integer $k\geq 3$,
\begin{multline*}
   | F_n(x) - \Phi(x)- \sum_{\nu=1}^{k-2} Q_{\nu n}(x)n^{-1/2} | \leq   C(k)\Bigg\{   (1+|x|)^{ -k} B_n^{-k} \sum_{j=1}^n  \E |W_{nj}^{(x)}|^k +    \\   (1+|x|)^{ -k-1} B_n^{-k-1} \sum_{j=1}^n\E |Z_{nj}^{(x)}|^{k+1}  +   (1+|x|)^{ -k-1} n^{k(k+1)/2}\Big( \sup_{|t|\geq \delta_n} \frac{1}{n} \sum_{j=1}^n |v_{j}(t)| +\frac{1}{2n} \Big)^n \Bigg\},
   \end{multline*}
where $\displaystyle \delta_n =  \frac{1}{12}  {B_n^2}{  (\sum_{j=1}^n\E  |Y_{nj}|^3)^{-1}  }$,  $C(k)>0 $  is a constant depending only on $k$.
 \end{lem}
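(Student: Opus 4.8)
The statement is a non-uniform Edgeworth expansion, and I would follow the classical Fourier-analytic scheme (Esseen smoothing together with a cumulant expansion of the characteristic function), adapted so as to produce the $x$-dependent bound. The key organizing principle is that all three remainder terms, and in particular the two truncation levels $B_n$ and $B_n(1+|x|)$ hidden in $Y_{nj}$, $Z_{nj}^{(x)}$, $W_{nj}^{(x)}$, should be generated by running the whole argument \emph{pointwise in $x$}: I fix $x\in\R$ and estimate $(1+|x|)^{k}|F_n(x)-\Phi(x)-\sum_{\nu=1}^{k-2}Q_{\nu n}(x)n^{-1/2}|$. The first step is a tail truncation. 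Writing $X_j=Z_{nj}^{(x)}+W_{nj}^{(x)}$, on the event $\{|X_j|\le B_n(1+|x|)\ \text{for all }j\le n\}$ the normalized sum $B_n^{-1}\sum_j X_j$ coincides with $B_n^{-1}\sum_j Z_{nj}^{(x)}$, whose distribution function I denote $\widetilde F_n$. The probability of the complementary event is at most $\sum_{j=1}^n\P(|X_j|>B_n(1+|x|))$, and a Markov estimate against the $k$-th moment on that event bounds it by $(1+|x|)^{-k}B_n^{-k}\sum_j\E|W_{nj}^{(x)}|^k$, which is exactly the first remainder. It remains to estimate $(1+|x|)^k|\widetilde F_n(x)-G_n(x)|$, where $G_n(y)=\Phi(y)+\sum_{\nu=1}^{k-2}Q_{\nu n}(y)n^{-1/2}$.

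Next I would apply a weighted (non-uniform) Esseen smoothing inequality, which controls this quantity by an integral of $|\widetilde f_n(t)-g_n(t)|/|t|$, and of suitable $t$-derivatives to absorb the weight $(1+|x|)^k$, over a finite frequency range $|t|\le T$, plus a negligible kernel term; here $\widetilde f_n(t)=\prod_{j}\widetilde v_j(t/B_n)$ is the characteristic function of the recentered truncated sum and $g_n$ is the Fourier transform of the signed density $G_n'$. On the central range $|t|\le\delta_n$ I would expand $\log\widetilde v_j(t/B_n)$ through its cumulants up to order $k$: the quadratic terms assemble to $e^{-t^2/2}$ and the lower cumulant corrections reproduce $g_n(t)$, once the Hermite identity $\Phi^{(m+1)}=(-1)^m\phi H_m$ is invoked. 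The choice $\delta_n=\tfrac1{12}B_n^2(\sum_j\E|Y_{nj}|^3)^{-1}$ guarantees that on this range the quadratic term dominates, so $|\widetilde f_n(t)|\le e^{-t^2/4}$ and the Taylor remainder of the logarithmic expansion is integrable; that remainder is controlled by the $(k+1)$-th truncated absolute moments and yields the second term $(1+|x|)^{-k-1}B_n^{-k-1}\sum_j\E|Z_{nj}^{(x)}|^{k+1}$.

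On the complementary tail range $|t|>\delta_n$ no expansion is available, so I would bound $|\widetilde f_n(t)|$ crudely. By the arithmetic–geometric mean inequality $\prod_j|v_j(t)|\le(\tfrac1n\sum_j|v_j(t)|)^n$, and after accounting for the truncation by an additive $\tfrac1{2n}$, the integrand is at most $(\sup_{|t|\ge\delta_n}\tfrac1n\sum_j|v_j(t)|+\tfrac1{2n})^n$; multiplying by the polynomially growing length of the integration interval produces the third remainder $(1+|x|)^{-k-1}n^{k(k+1)/2}(\cdots)^n$. Collecting the three contributions completes the estimate, and taking the supremum over $x$ afterward (absorbed into the already-present weights) gives the stated form.

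The main obstacle is the central-range step: controlling the cumulant and Taylor expansion of $\log\widetilde v_j$ uniformly in $j$ under only the order-$(k+1)$ truncated-moment hypothesis and the non-uniform weight, while simultaneously verifying that the threshold $\delta_n$ genuinely yields the Gaussian domination $|\widetilde f_n(t)|\le e^{-t^2/4}$ needed for integrability. A secondary but unavoidable subtlety is the bookkeeping between truncated and genuine cumulants: the expansion $Q_{\nu n}$ is written in the \emph{original} cumulants $\gamma_{\nu j}$, whereas the estimates above naturally involve truncated moments, so one must check that replacing the truncated cumulants by the true ones costs no more than the $Z_{nj}^{(x)}$ and $W_{nj}^{(x)}$ terms that are already present in the bound.
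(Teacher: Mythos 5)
The first thing you should know is that the paper does not prove this lemma at all: it is quoted as a known result of Bai and Zhao \cite{BaiZhao1986}, generalizing the i.i.d.\ case in Petrov \cite[p.~159, Theorem~1]{Petrov75}, and it is used purely as a black box in the proofs of Propositions \ref{prop4.5} and \ref{pro4}. So there is no proof in the paper to compare yours against; the only meaningful benchmark is the classical proof in the cited literature.

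Measured against that, your outline follows the correct and standard architecture: truncation at level $B_n(1+|x|)$ to generate the $W_{nj}^{(x)}$ remainder, a smoothing inequality, a cumulant expansion of the log-characteristic function on the central range $|t|\le\delta_n$, and the arithmetic--geometric mean bound $\prod_j |v_j(t)|\le\big(\tfrac1n\sum_j|v_j(t)|\big)^n$ on the tail range, which is visibly the source of the third remainder. But as written it is a proof plan, not a proof, and the three places where essentially all of the work lies are exactly the ones you defer. First, the ``weighted Esseen smoothing inequality'' that absorbs the factor $(1+|x|)^{k+1}$ is itself a nontrivial lemma which you neither state precisely nor establish; the non-uniform weight cannot simply be carried through the standard Esseen inequality. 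Second, the Gaussian domination $|\widetilde f_n(t)|\le e^{-ct^2}$ on $|t|\le\delta_n$, with the specific constant $\tfrac{1}{12}$ in the definition of $\delta_n$, requires a genuine argument involving the third truncated moments --- note that this is precisely where $Y_{nj}$ must enter, yet your sketch never actually uses $Y_{nj}$. Third, the bookkeeping you flag at the end is not a ``secondary subtlety'': the polynomials $Q_{\nu n}$ are written in the original cumulants $\gamma_{\nu j}$, while your expansion is performed for recentered truncated variables whose means are no longer zero, and showing that this recentering and the cumulant replacement cost only the $Z_{nj}^{(x)}$ and $W_{nj}^{(x)}$ terms already present is a substantial computation. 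Since the paper treats the lemma as external input, the appropriate ``proof'' in this context is the citation; actually proving it would require carrying out all three deferred steps in full, which amounts to reproducing the Bai--Zhao paper.
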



\section{Convergence of the martingales $\{(N_{1,n},\D_n)\}$ and $\{( N_{2,n}, \D_n )\}$}\label{sec5}
Now we can proceed to prove the convergence of the two martingales defined in Section \ref{sec2}.

\subsection{Convergence of the martingale $ \{ (N_{1,n},\D_n) \}$ }
The fact that $ \{ (N_{1,n},\D_n) \}$ is a martingale can be easily shown: it suffices to notice that
 \begin{eqnarray*}
     \E_{\xi, n} {N_{1,n+1} }&=&\E_{\xi, n} \bigg(   \frac{1}{\Pi_{n+1}} \sum_{u\in \T_{n+1}} S_u  \bigg) = \frac{1}{\Pi_{n+1}} \E_{\xi, n} \bigg(   \sum_{u\in \T_{n}} \sum_{i=1}^{N_u}( S_u+ L_{ui} ) \bigg)  \\
     &=&\frac{1}{\Pi_{n+1}} \sum_{u\in \T_{n}} \E_{\xi, n} \Bigg(  \sum_{i=1}^{N_u}( S_u+ L_{ui} ) \Bigg)   \\
     &=&\frac{1}{\Pi_{n+1}} \sum_{u\in \T_{n}} m_n S_u=  N_{1,n}.
 \end{eqnarray*}
 We shall prove the convergence of the martingale by showing that the series
\begin{equation} \label{eqn-series1}
\sum_{n=1}^{\infty} I_n  \;\;  \mbox{ converges a.s., \; with }  \;\; I_{n}= N_{1,n+1}-N_{1,n}.
\end{equation}
To this end, we first establish a lemma. For $n\geq 1$ and $|u|= n$, set
\begin{eqnarray}\label{eqn-Xu}
   && X_u=S_u \bigg(\frac{N_u}{m_{|u|}} -1\bigg) + \sum_{i=1}^{N_u}\frac{L_{ui}}{m_{|u|}},
\end{eqnarray}
 and let $\widehat{X}_n $ be a generic random variable  of $X_u$,  i.e. $\widehat{X}_n $  has the same distribution with $X_u$ (for $|u|=n$). Recall
 that  $\widehat{N}_n $  has the same distribution as $N_u$,  $|u|=n$.

We proceed  the proof by proving the following lemma:
\begin{lem}\label{lem6}  Under the conditions of  Proposition \ref{th1a}, we have
\begin{equation}\label{cbrweq5-1}
 \E_{\xi} {|\widehat{X}_n|} (\ln^+{{|\widehat{X}_n|}} )^{1+\lambda} \leq K_{\xi}n \left( (\ln n)^{1+\lambda} +  \E_\xi  \frac{\widehat{N}_n}{ m_n} (\ln^+ \widehat{N}_n)^{1+\lambda} + (\ln^- m_n)^{1+\lambda}\right),
\end{equation}
where $K_\xi$ is a constant.
 \end{lem}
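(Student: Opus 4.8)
Fix $u$ with $|u|=n$; since $\widehat{X}_n$ has the law of $X_u$, the task is to bound $\E_\xi g_0(|X_u|)$ with $g_0(x)=x(\ln^+x)^{1+\lambda}$. The plan is to split $X_u=A+B$, where $A=S_u\big(\tfrac{N_u}{m_n}-1\big)$ and $B=\tfrac1{m_n}\sum_{i=1}^{N_u}L_{ui}$, and to reduce the estimate to $A$ and $B$ \emph{separately}. Under the conditions of Proposition \ref{th1a} we have $\lambda>1$, so $g_0$ is convex and nondecreasing on $[0,\infty)$ with $g_0(0)=0$, and $g_0(2x)\le C_\lambda(x+g_0(x))$ by Lemma \ref{lemma2-1}(a); convexity then yields
\begin{equation*}
 g_0(|X_u|)\le g_0(|A|+|B|)\le\tfrac12 g_0(2|A|)+\tfrac12 g_0(2|B|)\le C_\lambda\big(|A|+|B|+g_0(|A|)+g_0(|B|)\big).
\end{equation*}
The point of passing through convexity, rather than expanding $(\ln^+(|A|+|B|))^{1+\lambda}$ directly, is that it removes all cross terms between $A$ and $B$ (which are dependent through $N_u$). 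It then suffices to bound the four expectations $\E_\xi|A|$, $\E_\xi|B|$, $\E_\xi g_0(|A|)$, $\E_\xi g_0(|B|)$, for which I will freely use that, under $\P_\xi$, the three blocks $S_u$, $N_u$ and $\{L_{ui}\}_{i\ge1}$ are mutually independent, $S_u$ being a sum of $n$ independent centered displacements and the $L_{ui}$ being i.i.d.\ centered with law $G(\xi_n)$.

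For the $A$-term I would use $S_u\perp N_u$ to factor. Writing $Y=N_u/m_n$, Lemma \ref{lemma2-1}(a) gives $(\ln^+(|S_u|\,|Y-1|))^{1+\lambda}\le C_\lambda\big((\ln^+|S_u|)^{1+\lambda}+(\ln^+|Y-1|)^{1+\lambda}\big)$, so $\E_\xi g_0(|A|)$ splits into $\E_\xi[|S_u|(\ln^+|S_u|)^{1+\lambda}]\,\E_\xi|Y-1|$ and $\E_\xi|S_u|\,\E_\xi[|Y-1|(\ln^+|Y-1|)^{1+\lambda}]$. The walk bounds $\E_\xi[|S_u|(\ln^+|S_u|)^{1+\lambda}]\le K_\xi\sqrt n(\ln n)^{1+\lambda}$ and $\E_\xi|S_u|\le s_n\le K_\xi\sqrt n$ follow by truncating at $|S_u|\le n^2$ (where the log factor is $\le C_\lambda(\ln n)^{1+\lambda}$) and controlling the tail through $(\ln^+x)^{1+\lambda}\le K_{\lambda,\varepsilon}x^\varepsilon$ together with the Rosenthal bound $\E_\xi|S_u|^\eta\le K_\xi n^{\eta/2}$ (valid since $\eta>2$). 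For the offspring factor, $|Y-1|\le Y+1$, $\E_\xi Y=1$, $(\ln^+Y)^{1+\lambda}\le K_\lambda Y$ (Lemma \ref{lemma2-1}(b)), and $\ln^+(N_u/m_n)\le\ln^+N_u+\ln^-m_n$ give $\E_\xi[|Y-1|(\ln^+|Y-1|)^{1+\lambda}]\le C_\lambda\big(1+\E_\xi\tfrac{\widehat{N}_n}{m_n}(\ln^+\widehat{N}_n)^{1+\lambda}+(\ln^-m_n)^{1+\lambda}\big)$, which is exactly the shape of the target bound.

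The $B$-term is the main obstacle, because $B$ is a \emph{randomly stopped} sum. I would condition on $N_u=k$, so that $T_k=\sum_{i=1}^k L_{ui}$ is a sum of $k$ i.i.d.\ centered displacements, and first establish the uniform estimate $\E_\xi[|T_k|(\ln^+|T_k|)^{1+\lambda}]\le K_\xi\sqrt{kn}\big((\ln(k+e))^{1+\lambda}+(\ln n)^{1+\lambda}\big)$; this comes from truncating at $|T_k|\le kn$ (using $\E_\xi|T_k|\le(k\sigma_n^{(2)})^{1/2}$ on the bulk) and, on the tail, combining $(\ln^+x)^{1+\lambda}\le K_{\lambda,\varepsilon}x^\varepsilon$ with Marcinkiewicz--Zygmund/Rosenthal, $\E_\xi|T_k|^\eta\le K_\xi(kn)^{\eta/2}$, and Markov's inequality. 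Averaging over $N_u$ and using $\ln^+(|T|/m_n)\le\ln^+|T|+\ln^-m_n$, the term $\tfrac1{m_n}\E_\xi[|T|(\ln^+|T|)^{1+\lambda}]$ carries the weight $\sqrt{N_u}/m_n$; the decisive step is the elementary bound $\sqrt{N_u}\le N_u$, which replaces $\sqrt{N_u}/m_n$ by the size-biased weight $N_u/m_n$ (with $\E_\xi\tfrac{N_u}{m_n}=1$) and produces precisely $\E_\xi\tfrac{\widehat{N}_n}{m_n}(\ln^+\widehat{N}_n)^{1+\lambda}$ and $(\ln n)^{1+\lambda}$, while the $\ln^-m_n$ contribution lands in $(\ln^-m_n)^{1+\lambda}$. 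The leftover $\tfrac{(\ln^-m_n)^{1+\lambda}}{m_n}\E_\xi|T|\le K_\xi\sqrt n(\ln^-m_n)^{1+\lambda}$ is controlled identically.

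Finally, the one genuinely environmental point is that every displacement moment appearing above --- $s_n^2=\sum_{k<n}\sigma_k^{(2)}$, $\sum_{k<n}\E_\xi|\widehat{L}_k|^\eta$, and the single-generation quantities $\sigma_n^{(2)}$ and $\E_\xi|\widehat{L}_n|^\eta$ --- must be shown to be $\le K_\xi n$ a.s.\ for one constant $K_\xi=K_\xi(\xi)$ \emph{not depending on} $n$. This follows from Kolmogorov's strong law (equivalently Birkhoff's theorem) applied to the i.i.d.\ environment $(\xi_k)$, since $\E\,\sigma_0^{(2)}=\E|\widehat{L}_0|^2<\infty$ and $\E|\widehat{L}_0|^\eta<\infty$ force $\sigma_n^{(2)}=o(n)$ and $\E_\xi|\widehat{L}_n|^\eta=o(n)$ a.s. Collecting the four bounds, and reading $(\ln n)^{1+\lambda}$ as $(\ln(n+e))^{1+\lambda}$ to absorb additive constants and the case of small $n$, one arrives at \eqref{cbrweq5-1}. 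I expect the delicate point to be the uniform-in-$(k,n)$ control of $\E_\xi[|T_k|(\ln^+|T_k|)^{1+\lambda}]$ for the random sum and the ensuing size-biasing via $\sqrt{N_u}\le N_u$; everything else is bookkeeping with Lemma \ref{lemma2-1}.
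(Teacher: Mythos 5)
Your proposal is correct, but it follows a genuinely different route from the paper's own proof. The paper never splits $X_u$ additively: it bounds $\ln^+|X_u|$ by a sum of four logarithmic pieces and then expands the product $|X_u|(\ln^+|X_u|)^{1+\lambda}$ into eight cross terms $\mathbb{J}_1,\dots,\mathbb{J}_8$, estimating each one via the independence of $S_u$, $N_u$ and the $L_{ui}$. Its two key devices are the elementary bound $(\ln^+x)^{1+\lambda}\le K_\lambda x$ (Lemma \ref{lemma2-1}(b)), which reduces the random-sum contributions to first and second conditional moments through Wald-type identities, and Jensen's inequality applied \emph{conditionally on $N_u$} to the concave function $(\ln(e^\lambda+x))^{1+\lambda}$ (Lemma \ref{lemma2-1}(c)), which replaces $\big|\sum_i L_{ui}\big|$ inside a logarithm by $N_u\,\E_\xi|\widehat L_n|$ and thereby produces exactly the three terms $(\ln n)^{1+\lambda}$, $\E_\xi\frac{\widehat N_n}{m_n}(\ln^+\widehat N_n)^{1+\lambda}$ and $(\ln^-m_n)^{1+\lambda}$. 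Your convexity step for $g_0(x)=x(\ln^+x)^{1+\lambda}$, namely $g_0(|A|+|B|)\le\tfrac12 g_0(2|A|)+\tfrac12 g_0(2|B|)$, removes all cross terms at the outset, so you never need the concavity/conditional-Jensen trick; the price is your uniform stopped-sum estimate $\E_\xi\big[|T_k|(\ln^+|T_k|)^{1+\lambda}\big]\le K_\xi\sqrt{kn}\,\big((\ln(k+e))^{1+\lambda}+(\ln n)^{1+\lambda}\big)$, which rests on truncation plus Rosenthal's inequality and hence on the full moment $\E|\widehat L_0|^{\eta}<\infty$ with $\eta>2$ --- available under the hypotheses of Proposition \ref{th1a}, but a heavier tool than anything the paper uses in this lemma, since the paper's proof gets by with only first and second conditional moments of the displacements (cf. \eqref{cbrweq3.1}). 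Your $\sqrt{N_u}\le N_u$ size-biasing step is sound and plays the role that Wald's identity plays in the paper's estimates of $\mathbb{J}_5$--$\mathbb{J}_8$; your appeal to the strong law/Birkhoff theorem to make $K_\xi$ uniform in $n$ is exactly the paper's argument for \eqref{cbrweq3.1}; and your bounds come out slightly sharper ($K_\xi\sqrt n$ where the paper settles for $K_\xi n$), though this gains nothing for the application. The small-$n$ caveat you flag --- reading $(\ln n)^{1+\lambda}$ as $(\ln(n+e))^{1+\lambda}$ so that the right-hand side does not vanish at $n=1$ --- is implicitly present in the paper's statement as well, and is harmless because only large $n$ matters in the proof of Proposition \ref{th1a}.
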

\begin{proof}

   For $u\in \T_n$,
\begin{eqnarray*}
  & &|X_u|\leq  |S_u|\left(1+ \frac{N_u}{m_{n}}\right) + \frac{ \abs{\sum_{i=1}^{N_u} L_{ui}}}{m_{n}}, \\
 & & \ln^+ |X_u|\leq 2+ \ln^+|S_u| + \ln(1+ N_u/m_n) + \ln^+ \abs{\frac{1}{m_{n}} \sum_{i=1}^{N_u} L_{ui} }, \\
 & &  4^{-\lambda}(\ln^+ |X_u|)^{1+\lambda}\leq  2^{1+\lambda}+( \ln^+|S_u|)^{1+\lambda} + \bigg(\ln\left(1+ \frac{N_u}{m_{n}}\right)\bigg)^{1+\lambda} + \bigg(\ln^+ \abs{ \frac{1}{m_{n}} \sum_{i=1}^{N_u} L_{ui} }\bigg)^{1+\lambda}.
\end{eqnarray*}
Hence we get that
 $$  4^{-\lambda} |X_u| (\ln^+|X_u|)^{1+\lambda}\leq \sum_{i=1}^8 \mathbb{J}_i,  $$
with
\begin{eqnarray*}
&& \mathbb{J}_1= 2^{1+\lambda} |S_u|\left(1+ \frac{N_u}{m_{n}}\right) ,\qquad   \mathbb{J}_2=  |S_u|( \ln^+|S_u|)^{1+\lambda} \left(1+ \frac{N_u}{m_{n}}\right), \\ &&  \mathbb{J}_3= |S_u|\left(1+ \frac{N_u}{m_{n}}\right)\bigg(\ln\left(1+ \frac{N_u}{m_{n}}\right)\bigg)^{1+\lambda}, \qquad  \mathbb{J}_4= |S_u|\left(1+ \frac{N_u}{m_{n}}\right) \bigg(\ln^+ \abs{ \frac{1}{m_{n}}\sum_{i=1}^{N_u} L_{ui} }\bigg)^{1+\lambda}, \\
   &&\mathbb{J}_5=   \frac{2^{1+\lambda}}{m_{n}}\abs{\sum_{i=1}^{N_u} L_{ui}} ,\quad  \mathbb{J}_6= \frac{( \ln^+|S_u|)^{1+\lambda}}{m_{n}} \abs{\sum_{i=1}^{N_u} L_{ui}},\quad  \mathbb{J}_7=\bigg(\ln\left(1+ \frac{N_u}{m_{n}}\right)\bigg)^{1+\lambda} \abs{\frac{1}{m_{n}}\sum_{i=1}^{N_u} L_{ui}}, \\ && \mathbb{J}_8=\frac{1}{m_{n}}\abs{\sum_{i=1}^{N_u} L_{ui}}\bigg(\ln^+ \abs{ \frac{1}{m_{n}} \sum_{i=1}^{N_u} L_{ui} }\bigg)^{1+\lambda}.
\end{eqnarray*}
Since \begin{equation*}
\lim_{n\rightarrow \infty} \frac{1}{n} \sum_{j=1}^{n} \E_\xi|\widehat{L}_j|^{q}  =\E |\widehat{L}_1|^q <\infty, \quad q=1,2 ,
\end{equation*}
there exists  a constant $K_\xi < \infty $ depending only on $\xi$ such that for $n\geq 1$ and $|u|= n$,
\begin{equation}\label{cbrweq3.1}
 \E_\xi |\widehat{L}_n|\leq K_\xi n,  \quad  \E_\xi |S_u|\leq \sum_{j=1}^{n} \E_\xi|\widehat{L}_j| \leq K_\xi n, \quad  \E_\xi |S_u|^2=\sum_{j=1}^{n} \E_\xi|\widehat{L}_j|^2 \leq K_\xi n.
\end{equation}
By the definition of the model,  $S_u$, $N_u$ and  $L_{ui}$  are mutually independent  under $\P_\xi$. On the basis of the above estimates,  we have the following inequalities,  where $K_\xi $  is a constant depending  on $\xi$, whose value may be different from lines to lines: for $n\geq 1$ and $|u|= n$,
\begin{align*}
   & \E_\xi  \J_1 =    2^{1+\lambda} \E_\xi |S_u| \E_\xi \left(1+ \frac{N_u}{m_{|u|}}\right) \leq   K_\xi n; \\
   & \E_\xi  \J_2  \leq K_{\lambda} \E_\xi( |S_u|^2 +|S_u|) \leq    K_\xi n    \quad (\mbox{by } \eqref{cbrw3.4}); \\
   & \E_\xi  \J_3  \leq    \E_\xi|S_u| \E_\xi \left(1+ \frac{N_u}{m_{|u|}}\right)\bigg(\ln\left(1+ \frac{N_u}{m_{n}}\right)\bigg)^{1+\lambda} \\
   &\qquad \leq K_\xi n \left(K_\xi+\E_\xi  \frac{\widehat{N}_n}{ m_n} (\ln^+ \widehat{N}_n)^{1+\lambda} +   \big(\ln^{-} m_n \big)^{1+\lambda} \right);
   \\ &
   \E_\xi \J_4 \leq \E_\xi |S_u| \; \E_\xi \left[  \left(1+ \frac{N_u}{m_{|u|}} \right) \bigg(\ln \Big(e^{\lambda}+ \frac{1}{m_{|u|}}\abs{ \sum_{i=1}^{N_u} L_{ui} } \Big)\bigg)^{1+\lambda} \right]
 \\ &  \qquad\leq (K_\xi n) \E_\xi \left[ \left(1+ \frac{N_u}{m_{|u|}} \right)   \bigg(\ln  \E_\xi \Big( e^{\lambda}+ \frac{1}{m_{|u|}}\sum_{i=1}^{N_u} \abs{ L_{ui} } \; \Big | \;  N_u\Big) \bigg)^{1+\lambda} \right]
 \\ & \qquad ~~~~ \mbox{(by Jensen's inequality under  $\E_\xi ( \cdot | N_u)$ using  the concavity of   $(\ln (e^{\lambda}+x))^{1+\lambda}$)}
 \\& \qquad =  (K_\xi n) \E_\xi \left(1+ \frac{N_u}{m_{|u|}} \right)   \bigg(\ln  \Big( e^{\lambda}+ \frac{1}{m_{|u|}}\sum_{i=1}^{N_u}\E_\xi \abs{ L_{ui} }\Big)\bigg)^{1+\lambda}
 \\ &  \qquad\leq K_\xi n \left(  K_\xi (\ln n)^{1+\lambda}+  \E_\xi \Big(\frac{1}{ m_{|u|} }N_u\big(\ln^+   { { N_u} }  \big)^{1+\lambda}\Big)  +2 \big(\ln^{-} m_n \big)^{1+\lambda}\right )
   \\ &\qquad\leq K_\xi n ( \ln n )^{1+\lambda}+ K_\xi n \E_\xi  \frac{1}{ m_n}\widehat{N}_n \big(\ln^+   {{ \widehat{N}_n }}  \big) ^{1+\lambda} + K_\xi n\big(\ln^{-} m_n \big)^{1+\lambda} ;
   \\ & \E_\xi  \J_5 \leq 2^{1+\lambda} \E_\xi |\widehat{L}_n|\leq  K_\xi n ;
   \\ &  \E_\xi  \J_6 = \E_\xi (\ln^+ |S_u|)^{1+\lambda}  \E_\xi \frac{1}{m_{|u|}} \abs{\sum_{i=1}^{N_u} L_{ui} }\leq  \E_\xi (\ln (e^{\lambda}+ |S_u|))^{1+\lambda}  \E_\xi \frac{1}{m_{|u|}} \abs{\sum_{i=1}^{N_u} L_{ui} }
    \\ &\qquad \leq  (\ln(e^{\lambda}+\E_\xi|S_u|) )^{1+\lambda} \E_\xi|\widehat{L}_n|\leq (\ln (K_\xi n) )^{1+\lambda} K_\xi n \leq K_\xi n (\ln n )^{1+\lambda};
   \\ & \E_\xi \J_7 \leq \E_\xi  \left[  \frac{1}{m_n} \sum_{i=1}^{N_u} (\E_\xi |L_{ui}|)   \Big(\ln\big (1+\frac{N_u}{m_n}\big)\Big)^{1+\lambda} \right]   \; \mbox{ (by the independence between $N_u$ and $L_{ui}$)}\\
    & \qquad \leq K_\xi n  \E_\xi\left[\frac{1}{m_n}N_u  3^{\lambda}
        \Big( 1+ (\ln^+ N_u)^{1+\lambda} + (\ln^-m_n)^{1+\lambda}\Big)\right]
         \\ & \qquad   \leq   K_\xi n + K_\xi n \E_\xi  \frac{1}{ m_n}\widehat{N}_n \big(\ln^+   {{ \widehat{N}_n }}  \big) ^{1+\lambda} + K_\xi n\big(\ln^{-} m_n \big)^{1+\lambda};
   \\ &  \E_\xi \J_8 \leq   \E_\xi \left[\frac{1}{m_n}   \Big|{\sum_{i=1}^{N_u} L_{ui}}\Big|  \bigg( \ln^+   \Big|{\sum_{i=1}^{N_u} L_{ui}}\Big|+ \ln^- m_n \bigg)^{1+\lambda}\right]
   \\ & \qquad \leq \E_\xi   \left[\frac{1}{m_n}   \Big|{\sum_{i=1}^{N_u} L_{ui}}\Big|     2^{\lambda} \bigg(   \Big(\ln^+   \Big|{\sum_{i=1}^{N_u} L_{ui}}\Big|\Big)^{1+\lambda} + (\ln^- m_n)^{1+\lambda} \bigg)         \right]
   \\ & \qquad \leq K_\lambda \frac{1}{m_n}\E_\xi  \Big|{\sum_{i=1}^{N_u} L_{ui}}\Big| ^2   +  2^{\lambda}  (\ln^- m_n)^{1+\lambda} \frac{1}{m_n} \E_\xi   \Big|{\sum_{i=1}^{N_u} L_{ui}}\Big|    \quad  (\mbox{  by  } \eqref{cbrw3.4})
   \\& \qquad \leq    K_\lambda \frac{1}{m_n}  \E_\xi \sum_{i=1}^{N_u} \E_\xi |L_{ui}|^2 +  2^{\lambda}  (\ln^- m_n)^{1+\lambda} \frac{1}{m_n} \E_\xi   {\sum_{i=1}^{N_u}\E_\xi \Big| L_{ui}}\Big|
   \\&\qquad\leq   K_\xi n + K_\xi n  (\ln^- m_n)^{1+\lambda}.
        \end{align*}
 Hence  we get that for  $n\geq 1$ and $|u|= n$,
 \begin{equation}\label{cbrweq3.4}
   \E_{\xi} {|X_u|} (\ln^+{{|X_u|}} )^{1+\lambda}\leq K_{\xi}n \left( (\ln n )^{1+\lambda} +  \E_\xi  \frac{\widehat{N}_n}{ m_n} \Big(\ln^+   {  \widehat{N}_n}   \Big) ^{1+ \lambda} + (\ln^- m_n)^{1+\lambda}\right).
 \end{equation}
This gives  \eqref{cbrweq5-1}.

\end{proof}

 \begin{proof}[Proof of Proposition \ref{th1a}]  We have already seen that $ \{ (N_{1,n},\D_n) \}$ is a martingale. We now prove its  convergence by showing the a.s. convergence of $\sum I_n $  (cf.  (\ref{eqn-series1})). Notice that
   $$   I_{n}= N_{1,n+1}-N_{1,n} = \frac{1 }{\Pi_n} \sum_{u\in \T_n} X_u.  $$
 We shall use a truncating argument to prove   the convergence.  Let
   $$
 X_u'= X_u \mathbf{1}_{\{|X_u| \leq \Pi_{|u|}\}}  \quad \mbox{ and }
 I_{n}'=  \frac{1 }{\Pi_n} \sum_{u\in \T_n} X'_u.
 $$
The following decomposition will play an important role:
\begin{equation} \label{eqn-decomposition}
  \sum_{n=0}^\infty I_n = \sum_{n=0}^\infty (I_n-I_n')+ \sum_{n=0}^\infty (I_n'-  \E_{\xi,\F_n} I_n' ) + \sum_{n=0}^\infty  \E_{\xi,\F_n} I_n'.
 \end{equation}
 We shall prove that each of the three series on the right hand side converges a.s.
To this end, let us first prove that
 \begin{equation}\label{cbrweq3.2}
  \sum_{n=1}^\infty\frac{1}{(\ln \Pi_n)^{1+\lambda}} \E_{\xi} {|\widehat{X}_n|} (\ln^+{{|\widehat{X}_n|}} )^{1+\lambda}<\infty \quad \mbox{ a.s. }
\end{equation}
Since $ \lim_{n\rightarrow\infty}  {\ln \Pi_n}/{ n} =\E\ln m_0 >0$ a.s., for a given constant $0 <\delta_1< \E\ln m_0$ and for $n$ large enough,
\begin{equation*}
  \ln \Pi_n > \delta_1 n,
\end{equation*}
so that, by Lemma   \ref{lem6},
\begin{equation*}
   \frac{1}{(\ln \Pi_n)^{1+\lambda}} \E_{\xi} {|\widehat{X}_n|} (\ln^+{{|\widehat{X}_n|}} )^{1+\lambda}\leq \frac{K_\xi}{\delta_1^{1+\lambda}} \frac{1}{n^{\lambda}}    \left[(\ln n)^{1+\lambda}+ \E_\xi \frac{\widehat{N}_n }{m_n} (\ln ^+  {\widehat{N}_n} )^{1+ \lambda} + (\ln^- m_n)^{1+\lambda}\right].
\end{equation*}
Observe that  for $ \lambda>1 $,  \begin{align*}
                                      & \E \sum_{n=1}^\infty \frac{1}{n^{\lambda} }\bigg[\E_\xi \frac{\widehat{N}_n }{m_n} (\ln ^+ {\widehat{N}_n} )^{1+ \lambda}+ (\ln^- m_n)^{1+\lambda}\bigg ]   \\
                                   = \  & \sum_{n=1}^\infty \frac{ 1}{n^{\lambda} } \bigg[   \E \frac{{\widehat{N}_0} }{m_0} (\ln^+  {\widehat{N}_0}  )^{1+\lambda}+   \E  (\ln^- m_0)^{1+\lambda}   \bigg]  <\infty,
                                  \end{align*}
which implies  that
\begin{equation*}
   \sum_{n=1}^\infty \frac{1}{n^{\lambda} }\bigg[\E_\xi \frac{\widehat{N}_n }{m_n} (\ln ^+ {\widehat{N}_n} )^{1+ \lambda}+ (\ln^- m_n)^{1+\lambda}\bigg ] <\infty  \mbox{~~ a.s.}
\end{equation*}
Therefore (\ref{cbrweq3.2}) holds.

%

For the first series $\sum_{n=0}^\infty (I_n-I_n')$ in (\ref{eqn-decomposition}),  we observe that
\begin{eqnarray*}
  \E_\xi |I_n-I_n'|  &=&  \E_\xi \abs{\frac{1}{\Pi_n}  \sum_{u\in \T_n} X_u \ind{|X_u| >\Pi_n} } \\
   &\leq &  \E_\xi \left\{\frac{1}{\Pi_n} \sum_{u\in \T_n}   \E_{\xi,\F_n} ({|X_u|}\ind{|X_u| >\Pi_n})\right\} \\
   &=&\E_{\xi}\big( {|\widehat{X}_n|} \ind{\abs{\widehat{X}_n} >\Pi_n}\big)\\ & \leq &\frac{1}{(\ln \Pi_n)^{1+\lambda}}\E_{\xi} {|\widehat{X}_n|} (\ln^+{{|\widehat{X}_n|}} )^{1+\lambda} .
\end{eqnarray*}
From this and  \eqref{cbrweq3.2}, \begin{equation*}
\E_{\xi}\sum_{n=0}^\infty \Big|I_n-I_n'\Big| \leq \sum_{n=0}^\infty \E_\xi |I_n-I_n'| <\infty,
\end{equation*}
whence   $\sum_{n=0}^\infty (I_n-I_n')$   converges a.s.

For the third series $\sum_{n=0}^{\infty} \E_{\xi,\F_n} I_n'$, as $ \E_{\xi,\F_n} I_n=0 $, we have
\begin{eqnarray*}
  &&\E_\xi\sum_{n=0}^{\infty}\abs{ \E_{\xi,\F_n} I_n' }  = E_\xi\sum_{n=0}^{\infty}\abs{ \E_{\xi,\F_n} (I_n-I_n') }
   \leq     \sum_{n=0}^{\infty}\E_{\xi} |I_n-I_n'| <\infty,
\end{eqnarray*}so that  $\sum_{n=0}^{\infty} \E_{\xi,\F_n} I_n'$ converges a.s.
It remains to
 prove that the second series
\begin{equation}\label{cbrweq3.3}
  \sum_{n=0}^\infty (I_n'- \E_{\xi,\F_n}I_n') \mbox{ converges a.s. }
\end{equation}
By the a.s. convergence of an  $L^2$ bounded martingale (see e.g. \cite[P. 251, Ex. 4.9]{Durrett96Proba}), we only need to show the convergence of the series
$   \sum_{n=0}^\infty \E_{\xi} (I_n'- \E_{\xi,\F_n}I_n')^2 .$
Notice
\begin{align*}
         \E_{\xi} (I_n'- \E_{\xi,\F_n}I_n')^2  &=  \E_\xi \Bigg(\frac{1}{\Pi_n}  \sum_{u\in\T_n } (X_u'- \E_{\xi,\F_n} X_u')\Bigg)^2
                  =  \E_{\xi} \Bigg(\frac{1}{\Pi_n^2}  \sum_{u\in\T_n }  \E_{\xi,\F_n} (X_u'- \E_{\xi,\F_n} X_u')^2\Bigg) \\
                 &\leq  \E_\xi \frac{1}{\Pi_n^2}  \sum_{u\in \T_n}  \E_{\xi,\F_n}  X_u'^2
               =\frac{1}{\Pi_n} \E_{\xi} (\widehat{X}_n^2\ind{ |\widehat{X}_n| \leq \Pi_n} ) \\
              &=   \frac{1}{\Pi_n} \E_{\xi} \Big( \widehat{X}_n^2\ind{ |\widehat{X}_n| \leq \Pi_n } \ind{  |\widehat{X}_n| \leq e^{2\lambda}} + \widehat{X}_n^2\ind{ |\widehat{X}_n| \leq \Pi_n } \ind{  |\widehat{X}_n| >e^{2\lambda}}  \Big)\\
               &\leq    \frac{e^{4\lambda}}{\Pi_n}+ \frac{1}{\Pi_n} \E_{\xi}\frac{ \widehat{X}_n^2 \Pi_n (\ln \Pi_n)^{-(1+\lambda) }  }{ | \widehat{X}_n| (\ln^+ |\widehat{X}_n|)^{-(1+\lambda)}}
               \\&
               \quad (\mbox{ because }  x(\ln x)^{-1-\lambda} \mbox{ is increasing for }  x > e^{2\lambda})  \\ &= \frac{e^{4\lambda}}{\Pi_n}+\frac{ 1 }{  (\ln \Pi_n)^{1+\lambda}}\E_{\xi}|\widehat{X}_n|(\ln^+ |\widehat{X}_n|)^{1+\lambda}.
            \end{align*}
Therefore  by \eqref{cbrweq3.2}, we see that  $\sum_{n=0}^\infty \E_{\xi} (I_n'- \E_{\xi,\F_n}I_n')^2 <\infty $ a.s..  This implies  \eqref{cbrweq3.3}.

\if
For convenience, we will use the following notation:
\begin{eqnarray*}
   && X_u= S_u \bigg(\frac{N_u}{m_{|u|}} -1\bigg) + \sum_{i=1}^{N_u}\frac{L_{ui}}{m_{|u|}},\quad X_u'= X_u \mathbf{1}_{\{|X_u| \leq \Pi_{|u|}, |L_{{ui} }|\leq |u|, 1\leq i\leq N_u\}};  \\
   &&  I_{n}= V_{n+1}-V_{n} = \frac{1 }{\Pi_n} \sum_{u\in \T_n} X_u,    \quad I_{n+1}'=  \frac{1 }{\Pi_n} \sum_{u\in \T_n} X'_u.
\end{eqnarray*}
Observe that,
\begin{equation*}
  \sum_{n=1}^{\infty} I_n =  \sum_{n=1}^{\infty} (I_n-I_n') +  \sum_{n=1}^{\infty} (I_n'- \E_{\xi, \F_n} I_n') +  \sum_{n=1}^{\infty}  \E_{\xi, \F_n} I_n'
\end{equation*}
So we will prove that the three series in the right hand side are finite a.s.

By the definition of the notations, we see that
 \begin{equation*}
    \sum_{n=1}^{\infty} (I_n-I_n') = \sum_{n=1}^{\infty} \frac{1}{\Pi_{n-1}} \sum_{u\in \T_{n-1}} X_u \mathbf{1}_{\{|X_u| > \Pi_{n-1}, | L_{{ui} }|\leq n-1, 1\leq i\leq N_u\}}.
 \end{equation*}
Then we have
      \begin{eqnarray*}
        \E_\xi |\sum_{n=1}^{\infty} (I_n-I_n')|  & \leq &\sum_{n=1}^{\infty} \E_\xi |(I_n-I_n')|  \\
          &\leq &  \sum_{n=1}^{\infty} \E_\xi  \bigg( \frac{1}{\Pi_n} \sum_{u\in\T_n} \E_{\xi, \F_n} X_u \mathbf{1}_{\{|X_u| >c^n\}}   \bigg).
      \end{eqnarray*}
Observe that

\begin{multline}\label{eq5}
  \P_\xi(I_n\neq I_n')  \leq   \E_\xi \bigg( \sum_{u\in \T_{n-1} } \sum_{i=1}^{N_u}  \P_\xi( |L_{ui}|>n)\bigg ) +  \\
   \E_\xi\bigg ( \sum_{u\in \T_{n-1} }  \P_{\xi, \I_n}( |X_u|>\Pi_{n-1}, |L_{ui}| \leq n-1, 1\leq i\leq N_u  ) \bigg).
\end{multline}

First we notice that \begin{eqnarray*}
              && \E \Big(\sum_{n=1}^{\infty}  \sum_{u\in \T_n} \P_\xi (|L_u|>n )\Big) \\
             &\leq& \E \Big(\sum_{n=1}^{\infty} \E_\xi (\sum_{u\in \T_n} \P_\xi (|L_u|>n ) )\Big )  \\
              &=& \E \Big(\sum_{n=1}^{\infty}\Pi_n \P_\xi (|H_n| >n) \Big)   \quad \mbox{(using the independence of $N_u$ and $ L_u$  under $\P_\xi$)}\\
              &\leq & \E \Big(\sum_{n=1}^{\infty}\Pi_n e^{-\delta_0 n} \E_\xi e^{\delta_0|H_n|} \Big)    \\
              &\leq & \KM   \sum_{n=1}^{\infty}e^{-\delta_0 n} \E \Pi_n < \Big (1-e^{(\ln \E m_0-\delta_0 )} \Big)^{-1}\KM  <\infty .
          \end{eqnarray*}
Then the first term in the right hand side of \eqref{eq5}  is summable.

Because $ \lim_{n\rightarrow +\infty} \ln \Pi_n /n =\E\ln m_0>0$,  then we have that for some constant  $c>1$, $n $ large enough,
\begin{equation*}
  \Pi_n>c^n.
\end{equation*}
Let $\widehat{X}_n$ be the generic random variable of $X_u \mathbf{1}_{\{|L_{ui}| \leq n, 1\leq i\leq N_u\}} $  and $\widehat{N}_n $ be the generic random variable of $N_u (|u|=n )$.
Then for $n$ large,
\begin{align*}
   & \E_\xi\bigg ( \sum_{u\in \T_{n-1} }  \P_{\xi, \I_n}( |X_u|>\Pi_{n-1}, |L_{ui}| \leq n-1, 1\leq i\leq N_u  ) \bigg) \\
   \leq~ &  \Pi_{n-1} \E_\xi\bigg (    \P_{\xi, \I_n}( |\widehat{X}_n|>\Pi_{n-1}  ) \bigg)
\\  \leq~ & \frac{\E_\xi |\widehat{X}_n| (\ln^{+}|\widehat{X}_n|)^{1+\lambda}}{(\ln \Pi_{n-1} )^{1+\lambda}}\\
\leq~ & \frac{1  }{ (n-1)^{1+\lambda} (\ln c)^{1+\lambda} }  (\E_\xi S_u^2 +2n) \E_\xi\frac{\widehat{N}_{n-1}}{m_{n-1}}(\ln^+|\frac{\widehat{N}_{n-1}}{m_{n-1}}|)^{1+\lambda}   \\
        \leq ~& \frac{(\KM+2)}{(n-1)^\lambda(\ln c)^{1+\lambda} }\E_\xi\frac{\widehat{N}_{n-1}}{m_{n-1}}(\ln^+|\frac{\widehat{N}_{n-1}}{m_{n-1}}|)^{1+\lambda}.
\end{align*}
Note that \begin{align*}
              &\E \sum_{n}   \frac{(\KM+2)}{(n-1)^\lambda\ln c}\E_\xi\frac{\widehat{N}_{n-1}}{m_{n-1}}(\ln^+|\frac{\widehat{N}_{n-1}}{m_{n-1}}|)^{1+\lambda} = \sum_{n}\frac{(\KM+2)}{n^\lambda\ln c}\E\frac{N}{m_0}(\ln^+|\frac{N}{m_0}|)^{1+\lambda}< \infty
          \end{align*}
implies  that a.s.
\begin{equation*}
   \sum_{n}   \frac{(\KM+2)}{(n-1)^\lambda\ln c}\E_\xi\frac{N_u}{m_{n-1}}(\ln^+|\frac{N_u}{m_{n-1}}|)^{1+\lambda}<\infty.
\end{equation*}
Therefore
\begin{equation*}
 \sum_n \E_\xi\bigg ( \sum_{u\in \T_{n-1} }  \P_{\xi, \I_n}( |X_u|>\Pi_{n-1}, |L_{ui}| \leq n-1, 1\leq i\leq N_u  ) \bigg) <\infty.
\end{equation*}

Now by the above arguments and  \eqref{eq5}, we will obtain that
\begin{equation*}
\sum_n \P_\xi(I_n\neq I_n')<\infty.
\end{equation*}
By the Borel-Cantelli Lemma, we get that $ \P_\xi (I_n\neq I_n'  \mbox{ infintey often} ) =0$ and hence that $  \sum_{n=1}^{\infty} (I_n-I_n')<\infty $.

Next we are going to prove that
\begin{equation*}
  \sum_{n=1}^\infty (I_n' -\E_{\xi, \I_n} I_n')<\infty \quad a.s.
\end{equation*}
It will follow from the fact that $ \sum_{n=1}^{\infty} \E_\xi (I_n' -\E_{\xi, \I_n} I_n')^2 <\infty$.
Notice that
\begin{eqnarray*}
 && \E_\xi (I_n' -\E_{\xi, \I_n} I_n')^2 = \E_\xi \bigg(  \E_{\xi, \I_n} \Big( \frac{1}{\Pi_{n-1}} \sum_{u\in \T_{n-1}} (X_u-\E_{\xi, \I_n} X_u' )  \Big)^2 \bigg)
 \\&=&
         \E_{\xi} \bigg(  \frac{1}{\Pi_{n-1}^2} \sum_{u\in\T_{n-1}}  \E_{\xi, \I_n}(X'_u-\E_{\xi, \I_n} X_u' )^2\bigg)
\\ & \leq&
      \frac{1}{\Pi_{n-1}^2} \E_{\xi} \bigg( \sum_{u\in\T_{n-1}}  \E_{\xi, \I_n}\big(X_u^2\mathbf{1}_{\{|X_u| < \Pi_{n-1}, |L_{{ui} }|\leq n, 1\leq i\leq N_u\}} \big) \bigg)
\\ &=&
      \frac{1}{\Pi_{n-1}}  \E_{\xi} \bigg( \E_{\xi, \I_n}\big(\widehat{X}_n^2\mathbf{1}_{\{|\widehat{X}_n| < \Pi_{n-1}\}} \big) \bigg)
\\ &=&
       \frac{1}{\Pi_{n-1}}  \E_{\xi} \bigg( \E_{\xi, \I_n}\big(\widehat{X}_n^2\mathbf{1}_{\{|\widehat{X}_n| < \Pi_{n-1}\}} (\mathbf{1}_{\{|\widehat{X}_n| \leq c\}}  + \mathbf{1}_{\{|\widehat{X}_n| > c\}})\big) \bigg)
\\ &\leq &
    \frac{c^2}{ \Pi_{n-1} } + \frac{1}{\Pi_{n-1}}  \E_\xi \bigg(  \E_{ \xi, \I_n}  \frac{\widehat{X}_n^2 \Pi_{n-1} (\ln^+ \Pi_{n-1})^{-(1+\lambda) }  }{ |\widehat{X}_n| (\ln^+ \widehat{X}_n )^{-(1+\lambda) }} \bigg)
\\ & \leq &  \frac{c^2}{ \Pi_{n-1} } +\frac{\E_\xi |\widehat{X}_n| (\ln^{+}|\widehat{X}_n|)^{1+\lambda}}{(\ln \Pi_{n-1} )^{1+\lambda}}
\end{eqnarray*}
Then by use the properties of $\Pi_n$, we see that $ \sum_{n=1}^{\infty} \E_\xi (I_n' -\E_{\xi, \I_n} I_n')^2 <\infty$.
Now we turn to prove that $\sum_{n=1}^\infty \E_{\xi, \I_n} I_n' <\infty $.

Observe that ( using that symmetry of  $L_{u}$),
\begin{eqnarray*}
   \E_\xi \bigg|\sum_{n=1}^\infty \E_{\xi, \I_n} I_n' \bigg|&=&  \E_\xi \Big|\sum_{n=1}^\infty \E_{\xi, \I_n}  \Big( \frac{1}{\Pi_{n-1}} \sum_{u\in \T_{n-1}} X_u\mathbf{1}_{\{|X_u|\leq \Pi_{n-1}, |L_{ui}| \leq n, 1\leq i \leq N_u \}  } \Big)  \Big| \\
   &=&  \E_\xi \Big|\sum_{n=1}^\infty \E_{\xi, \I_n}  \Big( \frac{1}{\Pi_{n-1}} \sum_{u\in \T_{n-1}} X_u\mathbf{1}_{\{|X_u|> \Pi_{n-1}, |L_{ui}| \leq n, 1\leq i \leq N_u   \}} \Big)  \Big| \\
   &\leq &  \sum_{n=1}^\infty \frac{\E_\xi |\widehat{X}_n| (\ln^{+}|\widehat{X}_n|)^{1+\lambda}}{(\ln \Pi_{n-1} )^{1+\lambda}}<\infty.
\end{eqnarray*}
Now the desired result follows.
\fi
Combining the above results, we see that the series $\sum I_n$ converges a.s., so that $N_{1,n}$ converges a.s. to  $$V_1=\sum_{n=1}^\infty (N_{1,n+1} -N_{1,n})+ N_{1,1}.$$
\end{proof}

\subsection{Convergence of the martingale $ \{(N_{2,n},\D_n)\}$}

  To see that $ \{(N_{2,n},\D_n)\}$ is a martingale, it suffices to notice that (remind that we have assumed  $\ell_n=0$)
 \begin{eqnarray*}
     \E_{\xi, n} {N_{2,n+1} }&=&\E_{\xi, n}(s_{n+1}^2W_{n+1})-   \E_{\xi, n} \bigg(   \frac{1}{\Pi_{n+1}} \sum_{u\in \T_{n+1}} S_u^2 \bigg)\\
      & = & s_{n+1}^2W_{n}-  \frac{1}{\Pi_{n+1}} \sum_{u\in \T_{n}} \E_{\xi, n} \bigg(  \sum_{i=1}^{N_u}(S_{u}+L_{ui})^2  \bigg)  \\
     &=&s_{n+1}^2W_{n}-  \frac{1}{\Pi_{n+1}} \sum_{u\in \T_{n}} \E_{\xi, n} \Bigg(  \sum_{i=1}^{N_u}( S_u^2+ 2S_u L_{ui}+L_{ui}^2 ) \Bigg)   \\
     &=&s_{n+1}^2W_{n}-  \frac{1}{\Pi_{n+1}} \sum_{u\in \T_{n}} \E_{\xi, n} \Bigg(  \sum_{i=1}^{N_u} \E_{\xi, n}\{ ( S_u^2+ 2S_u L_{ui}+L_{ui}^2 )|N_u \}\Bigg)\\
     &=&s_{n+1}^2W_{n}- \frac{1}{\Pi_{n+1}} \sum_{u\in \T_{n}}  m_n(S_u^2+\sigma_n^{(2)}) =s_n^2W_n- \frac{1}{\Pi_{n}} \sum_{u\in \T_{n}} S_u^2 =N_{2,n}.
 \end{eqnarray*}

As in the case of $ \{(N_{1,n},\D_n)\}$,  we will prove the convergence of the martingale $ \{(N_{2,n},\D_n)\}$  by showing that
\begin{equation*}
  \sum_{n=1}^{\infty} (N_{2,n+1} -N_{2,n}) \mbox{ converges  a.s., }
\end{equation*}
following the same lines as before.  For $n\geq 1$ and $|u|=n$,
we will still use the notation $X_u$ and $I_n$, but this time they are defined by:
 \begin{align} \label{Xu2}
    X_u&~= (S_u^2- s_n^2) (1-\frac{N_u}{m_n}) + \frac{1}{m_n} \sum_{i=1}^{N_u} (\sigma_n^{(2)}-L_{ui}^2) - \frac{2}{m_n} S_u \sum_{i=1}^{N_u}  L_{ui}, \\
    \label{In2}
    I_n&= N_{2,n+1}-N_{2,n} = \frac{1}{\Pi_{n}} \sum_{u\in \T_{n}} X_u.
 \end{align}
Instead of
 Lemma \ref{lem6}, we have:
\begin{lem}\label{lem7} For $n\geq 1$  and $|u|= n$,  let $\widehat{X}_n $ be a random variable with the common distribution  of $X_u$ defined by (\ref{Xu2}), under the law $\P_\xi$. If  the conditions of Proposition \ref{th2a} holds, then
  \begin{equation}\label{cbrweq5-2}
   \E_{\xi} {|\widehat{X}_n|} (\ln^+{{|\widehat{X}_n|}} )^{1+\lambda}\leq K_{\xi} n^2   \bigg[   \E_\xi  \frac{\widehat{N}_n}{ m_n} (\ln ^+  { \widehat{N}_n} )^{1+\lambda}+ (\ln^- m_n)^{1+\lambda}+1 \bigg].
  \end{equation}
\end{lem}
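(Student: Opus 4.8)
The plan is to follow the same template as the proof of Lemma \ref{lem6}, adapted to the more complicated summand $X_u$ in \eqref{Xu2}. Writing $X_u = A_u + B_u + C_u$ with $A_u = (S_u^2 - s_n^2)(1 - N_u/m_n)$, $B_u = \frac{1}{m_n}\sum_{i=1}^{N_u}(\sigma_n^{(2)} - L_{ui}^2)$ and $C_u = -\frac{2}{m_n}S_u\sum_{i=1}^{N_u}L_{ui}$, I would first apply the triangle inequality together with the subadditivity of $\ln^+$ in Lemma \ref{lemma2-1}(a) to obtain $\ln^+|X_u| \leq c_\lambda + \ln^+|A_u| + \ln^+|B_u| + \ln^+|C_u|$, and then, raising to the power $1+\lambda$ and multiplying by $|X_u| \leq |A_u|+|B_u|+|C_u|$, reduce the left-hand side of \eqref{cbrweq5-2} to a finite sum of terms $\J_i$, each a product of one of the three magnitude factors with one of the four logarithmic pieces (the constant piece included). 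This is exactly the bookkeeping that turned $|X_u|(\ln^+|X_u|)^{1+\lambda}$ into $\sum_{i=1}^8 \J_i$ in Lemma \ref{lem6}.

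Then I would bound each $\E_\xi \J_i$ using the same toolkit. The relevant quenched moment estimates, now needed up to fourth order because $X_u$ involves $S_u^2$ and $L_{ui}^2$, are $\E_\xi|S_u|^q \leq K_\xi n^{q/2}$ for $q \in \{1,2,3,4\}$ (in particular $\E_\xi S_u^4 \leq K_\xi n^2$), which follow from $\E|\widehat L_0|^\eta<\infty$ with $\eta>4$ via a Marcinkiewicz--Zygmund/Rosenthal estimate exactly as \eqref{cbrweq3.1} handled $q=1,2$; together with $s_n^2 \leq K_\xi n$ and $\sigma_n^{(\nu)} = O(1)$. The three structural devices are unchanged: (i) the independence of $S_u$ from $(N_u,\{L_{ui}\})$ under $\P_\xi$, which lets me factor expectations; (ii) Jensen's inequality under $\E_\xi(\cdot\mid N_u)$ combined with the concavity of $(\ln(e^\lambda+x))^{1+\lambda}$ from \eqref{cbrw2.3}, used to replace $\sum_i|L_{ui}|$ by its conditional mean inside a logarithm; and (iii) the elementary bound \eqref{cbrw3.4} to trade a surplus logarithmic factor for a polynomial one whenever a higher moment is available.

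Carrying this out, the purely offspring-driven pieces (those whose logarithmic factor is $(\ln(1+N_u/m_n))^{1+\lambda}$) split via Lemma \ref{lemma2-1}(a) into $(\ln^+N_u)^{1+\lambda}$ and $(\ln^- m_n)^{1+\lambda}$ contributions, producing the two terms $\E_\xi\frac{\widehat N_n}{m_n}(\ln^+\widehat N_n)^{1+\lambda}$ and $(\ln^- m_n)^{1+\lambda}$ on the right of \eqref{cbrweq5-2}; all remaining pieces are bounded by pure powers of $n$, the largest of which is $n^2$, and since $n(\ln n)^{1+\lambda} \leq K_\lambda n^2$ the logarithmic order is absorbed into the $+1$ inside the bracket. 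The main obstacle is the $A_u$ term: its logarithmic factor $(\ln^+|S_u^2 - s_n^2|)^{1+\lambda}$, once converted by \eqref{cbrw3.4} into the extra factor $|S_u^2 - s_n^2|$, leaves me estimating $\E_\xi(S_u^2 - s_n^2)^2 = \E_\xi S_u^4 - s_n^4 \leq K_\xi n^2$, which is precisely where the fourth-moment hypothesis $\eta>4$ of Proposition \ref{th2a} is consumed and where the order $n^2$ (rather than the order $n$ of Lemma \ref{lem6}) originates. A secondary point requiring care is the cross term $C_u$, where I would first take the conditional expectation $\E_\xi\big((\sum_{i=1}^{N_u}L_{ui})^2\mid N_u\big) = N_u\sigma_n^{(2)}$ (the cross terms vanishing because $l_n=0$) before factoring off the independent $\E_\xi S_u^2$; this keeps the $C_u$ contribution at order $n$, so that it does not disturb the leading order $n^2$.
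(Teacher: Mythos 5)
Your overall template (split $X_u$ from \eqref{Xu2} into its three natural pieces, expand magnitude times logarithm, then use independence, Jensen's inequality with \eqref{cbrw2.3}, and the trade \eqref{cbrw3.4} backed by fourth moments of $S_u$) is the same as the paper's proof of Lemma \ref{lem7}, and your identification of where the order $n^2$ and the hypothesis $\eta>4$ enter (namely $\E_\xi(S_u^2-s_n^2)^2=\E_\xi S_u^4-s_n^4\leq K_\xi n^2$) is correct. But your treatment of the cross term $C_u$ has a genuine gap. Squaring $C_u$ --- which is what applying \eqref{cbrw3.4} to $|C_u|(\ln^+|C_u|)^{1+\lambda}$ and then ``taking $\E_\xi\big((\sum_i L_{ui})^2\mid N_u\big)=N_u\sigma_n^{(2)}$ before factoring off $\E_\xi S_u^2$'' amounts to --- gives
\begin{equation*}
\E_\xi C_u^2=\frac{4}{m_n^2}\,\E_\xi S_u^2\;\E_\xi\Big(\sum_{i=1}^{N_u}L_{ui}\Big)^{2}
=\frac{4}{m_n^2}\,\E_\xi S_u^2\cdot m_n\sigma_n^{(2)}
=\frac{4\,\sigma_n^{(2)}}{m_n}\,\E_\xi S_u^2 ,
\end{equation*}
so only one of the two powers of $1/m_n$ is cancelled by $\E_\xi N_u=m_n$, and a factor $1/m_n$ survives. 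Under the hypotheses of Proposition \ref{th2a} there is no lower bound on $m_n$: only $\E(\ln^- m_0)^{1+\lambda}<\infty$ is assumed, so $1/m_n=e^{\ln^- m_n}$ on $\{m_n<1\}$ can be superpolynomially large in $n$ along a subsequence, and a bound of the form $K_\xi n^{a}/m_n$ cannot be absorbed into the right-hand side of \eqref{cbrweq5-2}, which tolerates only the logarithmic quantity $(\ln^- m_n)^{1+\lambda}$. (The assumption $\E m_0^{-\delta}<\infty$ belongs to the theorems, not to this lemma, and even it would spoil the exponent $n^2$ needed downstream.) Your claim that this keeps ``the $C_u$ contribution at order $n$'' also relies on $\sigma_n^{(\nu)}=O(1)$, which is unjustified: the ergodic theorem controls only Ces\`aro means, so all one gets is $\sigma_n^{(2)}\leq(\E_\xi\widehat L_n^4)^{1/2}\leq K_\xi\sqrt n$, not boundedness. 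The same $1/m_n$ defect appears if you square $B_u$ as a whole.

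The paper avoids this by never letting $1/m_n$ get squared: it first splits the logarithms, $\ln^+|C_u|\leq 1+\ln^+|S_u|+\ln^+\big|\tfrac{2}{m_n}\sum_i L_{ui}\big|$ and then $\ln^+\big|\tfrac{1}{m_n}\sum_i(\cdots)\big|\leq 1+\ln^+\big|\sum_i(\cdots)\big|+\ln^- m_n$, and only afterwards applies \eqref{cbrw3.4} (together with $2ab\leq a^2+b^2$) to the \emph{inner} sums; every resulting term then carries exactly one factor $1/m_n$ paired with one sum of $N_u$ i.i.d.\ terms, so that $\E_\xi N_u=m_n$ removes it, e.g.\ $\tfrac{1}{m_n}\E_\xi\big|\sum_i L_{ui}\big|^2=\sigma_n^{(2)}$ and $\tfrac{1}{m_n}\E_\xi\big|\sum_i(\sigma_n^{(2)}-L_{ui}^2)\big|^2=\E_\xi\big(\sigma_n^{(2)}-\widehat L_n^2\big)^2\leq K_\xi n$; this is exactly how $\E_\xi\K_5$ and $\E_\xi\K_8$ are estimated in the paper. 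A related caution: in your announced twelve-term expansion the diagonal $A_u$ term keeps $(\ln^+|A_u|)^{1+\lambda}$ with $N_u/m_n$ still inside the logarithm; trading it by \eqref{cbrw3.4} would require $\E_\xi N_u^2$, i.e.\ precisely the second-moment assumption this paper is designed to avoid, so there too you must split off $\ln(1+N_u/m_n)$ first (as your later discussion implicitly does, and as the paper's $\K_1$, $\K_2$ do). With these repairs your argument becomes the paper's proof; without them, the $B_u$ and $C_u$ diagonal estimates fail.
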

\begin{proof}
Observe that for $|u|= n$,
\begin{align*}
                &|X_u|~\leq |s_n^2-S_u^2|(1+\frac{N_u}{m_n})+ \left| \frac{1}{m_n} \sum_{i=1}^{N_u} (\sigma_n^{(2)} -L_{ui}^2)\right|+ |S_u| \left|\frac{2}{m_n}\sum_{i=1}^{N_u}L_{ui}\right|,  \\
               & \ln^+ |X_u|~ \leq 2+ \ln^+|s_n^2-S_u^2|+\ln (1+\frac{N_u}{m_n})+\ln^+ \left| \frac{1}{m_n} \sum_{i=1}^{N_u} (\sigma_n^{(2)} -L_{ui}^2) \right| \\ &\qquad \qquad  \qquad    +  \ln^+ \left|\frac{2}{m_n} \sum_{i=1}^{N_u}   L_{ui} \right|  +  \ln^+|S_u|,\\
               &  6^{-\lambda} (\ln^+|X_u|)^{1+\lambda} \leq 2^ {1+\lambda}+  (\ln^+|s_n^2-S_u^2|)^{1+\lambda}+ (\ln(1+\frac{N_u}{m_n}))^{1+\lambda} \\  &\qquad   \qquad   \qquad+  \left(\ln^+ \left| \frac{1}{m_n} \sum_{i=1}^{N_u} (\sigma_n^{(2)} -L_{ui}^2) \right|\right)^{1+\lambda} +  \left(\ln^+ \left|\frac{2}{m_n} \sum_{i=1}^{N_u}   L_{ui} \right| \right )^{1+\lambda}+( \ln^+|S_u| )^{1+\lambda}.
             \end{align*}
Therefore
    $$ 6^{-\lambda}|X_u| (\ln^+|X_u|)^{1+\lambda} \leq  \sum_{i=1}^8 \mathbb{K}_i $$
  with
  \begin{align*}
  & \K_1 =|s_n^2-S_u^2|  (1+\frac{N_u}{m_n})\Bigg[ 2^ {1+\lambda}+ \Big(\ln(1+\frac{N_u}{m_n})\Big)^{1+\lambda}+ \left(\ln^+ \left| \frac{1}{m_n} \sum_{i=1}^{N_u} (\sigma_n^{(2)} -L_{ui}^2) \right|\right)^{1+\lambda}\\ & \hspace{7.8cm} +  \left(\ln^+ \left|\frac{2}{m_n} \sum_{i=1}^{N_u}   L_{ui} \right| \right )^{1+\lambda} \Bigg],\\
  & \K_2 =|s_n^2-S_u^2|  (1+\frac{N_u}{m_n}) \bigg[ (\ln^+|s_n^2-S_u^2|)^{1+\lambda}+ ( \ln^+|S_u| )^{1+\lambda} \bigg],
  \\ & \K_3 = \left| \frac{1}{m_n} \sum_{i=1}^{N_u} (\sigma_n^{(2)} -L_{ui}^2) \right| \bigg[2^ {1+\lambda}+ (\ln^+|s_n^2-S_u^2|)^{1+\lambda}+ ( \ln^+|S_u| )^{1+\lambda} \bigg],\\ &
  \K_4 =  \left| \frac{1}{m_n} \sum_{i=1}^{N_u} (\sigma_n^{(2)} -L_{ui}^2) \right|
        \Big(\ln(1+\frac{N_u}{m_n})\Big)^{1+\lambda},
  \\ & \K_5=\left| \frac{1}{m_n} \sum_{i=1}^{N_u} (\sigma_n^{(2)} -L_{ui}^2) \right|
        \left[\left(\ln^+ \left|\frac{2}{m_n} \sum_{i=1}^{N_u}   L_{ui} \right| \right )^{1+\lambda}+ \left(\ln^+ \left| \frac{1}{m_n} \sum_{i=1}^{N_u} (\sigma_n^{(2)} -L_{ui}^2) \right|\right)^{1+\lambda}\right],
  \\ & \K_6 = \left|\frac{2}{m_n} \sum_{i=1}^{N_u}   L_{ui} \right| |S_u|
          \bigg[ 2^ {1+\lambda}+ (\ln^+|s_n^2-S_u^2|)^{1+\lambda}+ ( \ln^+|S_u| )^{1+\lambda} \bigg],\\
  & \K_7 =  \left|\frac{2}{m_n} \sum_{i=1}^{N_u}   L_{ui} \right| |S_u|\Big(\ln(1+\frac{N_u}{m_n})\Big)^{1+\lambda},
  \\ & \K_8= \left|\frac{2}{m_n} \sum_{i=1}^{N_u}   L_{ui} \right| |S_u|
            \left[\left(\ln^+ \left|\frac{2}{m_n} \sum_{i=1}^{N_u}   L_{ui} \right| \right )^{1+\lambda}+ \left(\ln^+ \left| \frac{1}{m_n} \sum_{i=1}^{N_u} (\sigma_n^{(2)} -L_{ui}^2) \right|\right)^{1+\lambda}\right].
 \end{align*}
It is  clear that \eqref{cbrweq3.1}  remains valid here; similarly, we get
\begin{equation*}
   \E_\xi |\sigma_n^{(2)} -L_{ui}^2| =  \E_\xi |\sigma_n^{(2)} -\widehat{L}_{n}^2|^2  \leq K_\xi n
\end{equation*}
(recall that $\widehat{L}_{n}$ is a random variable with the same distribution as $ L_{ui}$ for any $|u|=n $ and $i\geq 1$).
By the definition of the model,  $S_u$, $N_u$ and  $L_{ui}$  are mutually independent  under $\P_\xi$. On the basis of the above estimates,  we have the following inequalities: for  $|u|= n$,
\begin{align*}
  \E_\xi \K_1 & \leq \E_\xi| S_u^2+s_n^2| \E_\xi   (1+\frac{N_u}{m_n})\Bigg[ 2^ {1+\lambda}+ \Big(\ln(1+\frac{N_u}{m_n})\Big)^{1+\lambda}+\left(\ln \Big(e^{\lambda}+ \frac{1}{m_n} \sum_{i=1}^{N_u} \E_\xi|\sigma_n^{(2)} -L_{ui}^2|\Big) \right)^{1+\lambda} \\ & \qquad  +  \left(\ln\Big(e^{\lambda}+   \frac{2}{m_n} \sum_{i=1}^{N_u} \E_\xi | L_{ui}|   \Big)\right )^{1+\lambda} \Bigg]  \quad (\mbox{by Jensen's inequality under } \E_\xi(\cdot | N_u) )
  \\ & \leq K_\xi n\left[K_\xi + \E_\xi  \frac{N_u}{m_n}  \big( \ln^+{N_u} \big)^{1+\lambda}+    \big( \ln^-{m_n} \big)^{1+\lambda}+ (\ln n)^{1+\lambda}  \right];
 \\\E_\xi \K_2 & \leq 2(\E_\xi |S_u |^{2+\varepsilon} +   |s_n|^{2+\varepsilon} )\leq K_\xi n^{2};
 \\ \E_\xi \K_3 & \leq  \E_\xi  \Big(\frac{1}{m_n} \sum_{i=1}^{N_u} \E_\xi |\sigma_n^{(2)} -L_{ui}^2|\Big) \bigg(2^ {1+\lambda}+ (\ln(e^{\lambda}+\E_\xi|s_n^2-S_u^2|) )^{1+\lambda}+ ( \ln (e^{\lambda}+\E_\xi|S_u|) )^{1+\lambda} \bigg)\\ &
  \leq      K_\xi n (\ln n )^{1+\lambda};
  \\  \E_\xi \K_4 & \leq  K_\xi n+ K_\xi n \E_\xi  \frac{N_u}{m_n} \Big(\ln(1+\frac{N_u}{m_n})\Big)^{1+\lambda};
 \\ \E_\xi \K_5 &\leq 3^{\lambda} \E_\xi \frac{1}{m_n}\bigg| \sum_{i=1}^{N_u}  ( \sigma^{(2)}_n - L_{ui}^2)\bigg|   \Bigg[  \Big(  \ln^+\Big | \sum_{i=1}^{N_u}    L_{ui} \Big|\Big) ^{1+\lambda} +  \Big(  \ln^+\Big | \sum_{i=1}^{N_u}    (\sigma^{(2)}_n - L_{ui}^2) \Big|\Big) ^{1+\lambda} + \\ &  \qquad \qquad \qquad\qquad \qquad \qquad  2 (\ln^- m_n)^{1+\lambda} + 1    \Bigg]
 \\ & \leq  K_\lambda \frac{1}{m_n} \E_\xi \Bigg[\bigg| \sum_{i=1}^{N_u}  ( \sigma^{(2)}_n - L_{ui}^2)\bigg| ^2+  \Big(  \ln^+\Big | \sum_{i=1}^{N_u}    L_{ui} \Big|\Big) ^{2+2\lambda}  \Bigg]+
 \\ &\qquad        K_\lambda \E_\xi  \frac{1}{m_n}\bigg| \sum_{i=1}^{N_u}  ( \sigma^{(2)}_n - L_{ui}^2)\bigg|^2 + K_\xi n   (  (\ln^- m_n)^{1+\lambda}+1)  \qquad (\mbox{by \eqref{cbrw3.4} and }  2ab \leq  a^2+b^2)
 \\ & \leq_{\eqref{cbrw3.4}} K_\lambda \frac{1}{m_n} \E_\xi  \Big[ \sum_{i=1}^{N_u} \E_\xi ( \sigma^{(2)}_n - L_{ui}^2)^2\Big] +   K_\lambda \frac{1}{m_n} \E_\xi  \Big[ \sum_{i=1}^{N_u}    \E_\xi\Big |L_{ui} \Big| \Big]+K_\xi n   (  (\ln^- m_n)^{1+\lambda}+1)
 \\ & \leq  K_\xi n \big (  (\ln^- m_n)^{1+\lambda}+1\big);
  \\  \E_\xi \K_6 & \leq_{\eqref{cbrw2.1} }  \E_\xi \bigg(\frac{2}{m_n} \sum_{i=1}^{N_u}   \E_\xi |L_{ui}|\bigg) \E_\xi  \Big[K_\lambda |S_u|\big(1+  (\ln^+|S_u|)^{1+\lambda}+   (\ln s_n^2)^{1+\lambda} \big)\Big]
  \\ &\leq _{\eqref{cbrw3.4} }  K_\xi n  \E_\xi  \Big[ |S_u|^2+ |S_u|+      s_n^2  \big)\Big] \leq K_\xi n^2  ;
  \\ \E_\xi \K_7 & \leq \E_\xi  \left(\Big(\ln(1+\frac{N_u}{m_n})\Big)^{1+\lambda}\frac{2}{m_n} \sum_{i=1}^{N_u} \E_\xi  |L_{ui}|\right)  \E_\xi |S_u|
  \\ & \leq K_\xi n^2 \Big[ \E_\xi  \frac{N_u}{m_n}  \Big(\ln ^+ {N_u}  \Big)^{1+\lambda}  + (\ln^- m_n)^{1+\lambda}\Big];
  \\  \E_\xi \K_8 &  \leq  K_\xi n^2 \big (  (\ln^- m_n)^{1+\lambda}+1\big)
 \qquad ( \mbox{similar reason as  in the estimation for } \E_\xi \mathbb{K}_5 ) .
\end{align*}
Combining the above estimates,  we get that
 \begin{equation}\label{cbrweq4.1}
   \E_{\xi} {|\widehat{X}_n|} (\ln^+{{|\widehat{X}_n|}} )^{1+\lambda}  \leq  K_\xi n^2 \bigg ( \E_\xi  \frac{\widehat{N}_n}{ m_n}\Big (\ln^+ {\widehat{N}_n}\Big)^{1+\lambda}+  (\ln^- m_n)^{1+\lambda}+1\bigg)
 \end{equation}
This ends the proof of Lemma \ref{lem7}.
\end{proof}

\begin{proof}[Proof of Proposition \ref{th2a} ]
The proof is almost the same  as that of Proposition \ref{th1a}:  we still use the decomposition (\ref{eqn-decomposition}), but with $I_n$ and $X_u$ defined by
(\ref{In2}) and (\ref{Xu2}), and Lemma \ref{lem7} instead of
  Lemma \ref{lem6}, to prove that the series  $ \sum_{n=0}^\infty  (N_{2,n+1}-N_{2,n}) $ converges  a.s., yielding that
$\{N_{2,n}\}$  converges a.s. to  $$V_2=\sum_{n=1}^\infty (N_{2,n+1}-N_{2,n})+ N_{2,1}.$$
\end{proof}

\section{Proof of  Theorem \ref{th1} }\label{sec3}


\if
For each $n$,
we choose an integer $k_n<n$ as follows.
Let  $\beta$ with $1/2<\beta<1$  and   $\alpha>2/
   (\beta^{-1}-1)$. For $j\in \mathbb{N}$ and $j^{ \alpha/\beta} \leq
   n<(j+1)^{\alpha/\beta}$, set $k_n=a_j=[j^{\alpha}]$. Let $t_n= \ell_n+s_n t$ for $t\in \mathbb{R}$ and $n\geq 1$.
   Then by \eqref{cbrweq11},
\begin{equation}\label{cbrweq12}
    W_n( s_n t )=\mathds{A}_n+\mathds{B}_n+ W_{k_n}\Phi(t),
\end{equation}
where
\begin{eqnarray*}
  \mathds{A}_n &=& \frac{1}{\pi_{k_n }}   \sum_{u\in \mathbb{T}_{k_n}}
  \left\{W_{n-k_n}(u,s_n t-S_u)-  {\E}_{\xi,k_n} (W_{n-k_n}(u,s_n t-S_u) )\right\},\\
   \mathds{B}_n&=&\frac{1}{\pi_{k_n }}   \sum_{u\in \mathbb{T}_{k_n}} [{\E}_{\xi,k_n} (W_{n-k_n}(u,s_n t-S_u) )
   -\Phi (t)].
   \end{eqnarray*}
Here we remind that the random variables $W_{n-k_n}(u,s_n t-S_u)$ are independent of each other under the conditional probability  $\P_{\xi, k_n}$.

For the proof of Theorem \ref{th1}, as in Asmussen  and Kaplan \cite{AsmussenKaplan76BRW2}, the main idea is to use the decomposition formula \eqref{cbrweq12}, proving that $\mathds{A}_n\rightarrow 0$ and $\mathds{B}_n\rightarrow 0$ in probability. However, new ideas will be needed for the proof of the later due to the appearance of the random environment.
\fi

\subsection{A key decomposition }

 For $u\in (\N^*)^k (k\geq 0)
 $  and $n\geq 1$,
write for $B\subset \R$,
\begin{equation*}
Z_{n}(u,B)= \sum_{v \in \mathbb{T}_n(u)}\mathbf{1}_B(S_{uv}-S_u).
\end{equation*}
It can be easily seen that the law of $Z_{n}(u,B)$ under ${\P}_{\xi}$ is the same as that of $Z_n(B)$ under ${P}_{\theta^k\xi}$. Define
   \begin{eqnarray*}
  &&W_{n}(u,B) =Z_{n}(u,B)/\Pi_n(\theta^k\xi), \quad  W_n(u,t) = W_n(u,(-\infty,t]), \\
  && W_{n}(B) =Z_{n}( B)/ \Pi_n, \quad W_n(t) =W_n((-\infty,t]).
\end{eqnarray*}
By definition, we have $\Pi_{n}(\theta^k\xi)=m_k\cdots m_{k+n-1}$, $Z_n(B)= Z_n(\varnothing, B)$, $W_n(B)=W_n(\varnothing,B)$, $W_n= W_n(\mathbb{R})$.
 The following decomposition will play a key role in our approach: for $k\leq n$,
\begin{equation}\label{cbrweq11}
    Z_n(B)=\sum_{u\in \mathbb{T}_k} Z_{n-k}(u, B-S_u).
\end{equation}
   Remark that by our definition, for $u\in \T_k $, $$Z_{n-k}(u,B-S_u)=\sum_{v_1\cdots v_{n-k} \in \mathbb{T}_{n-k}(u) } \mathbf{1}_B(S_{uv_1\cdots v_{n-k}})$$ represents number of the descendants of $u$ at time $n$ situated in  $B$.

For each $n$,
we choose an integer $k_n<n$ as follows. Let $\beta$ be a real number  such that
$
   \max{\{\frac{2}{\lambda}, \frac{3}{\eta}\}}<\beta<\frac{1}{4}
$
and  set $k_n=\lfloor n^{\beta}\rfloor$, the integral part of $n^{\beta}$.
   Then on the basis of  \eqref{cbrweq11},  the following  decomposition will hold:
\begin{equation}\label{cbrweq12}
  \Pi_n^{-1} Z_n(s_n t)  - \Phi(t) W=\mathds{A}_n+\mathds{B}_n+ \mathds{C}_n,
\end{equation}
where
\begin{eqnarray*}
  \mathds{A}_n &=& \frac{1}{\Pi_{k_n }}   \sum_{u\in \mathbb{T}_{k_n}}
  \left[W_{n-k_n}(u,s_n t-S_u)-  {\E}_{\xi,k_n} W_{n-k_n}(u,s_n t-S_u) \right],\\
   \mathds{B}_n&=&\frac{1}{\Pi_{k_n }}   \sum_{u\in \mathbb{T}_{k_n}}\left [{\E}_{\xi,k_n} W_{n-k_n}(u,s_n t-S_u)
   -\Phi (t)\right],
   \\ \mathds{C}_n&=&   (W_{k_n}-W)\Phi(t).
   \end{eqnarray*}
Here we remind that the random variables $W_{n-k_n}(u,s_n t-S_u)$ are independent of each other under the conditional probability  $\P_{\xi, k_n}$.

\subsection{Proof of Theorem \ref{th1}}
 First,  observe that the condition $ \E m_0^{-\delta}<\infty$ implies that $\E \big(\ln^- m_0\big)^{ \kappa }<\infty$  for all $\kappa>0$.
 So the hypotheses of  Propositions \ref{th1a} and \ref{th2a}  are satisfied under the conditions of Theorem \ref{th1}.

By virtue of the decomposition \eqref{cbrweq12}, we shall divide  the proof  into three lemmas.
\begin{lem}\label{lem1}
Under the hypothesis of  Theorem \ref{th1},
\begin{equation}\label{eq6}
 \sqrt{n}\mathds{A}_n \xrightarrow{n \rightarrow \infty } 0  \mbox{ a.s.}
\end{equation}
\end{lem}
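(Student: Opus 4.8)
The plan is to exploit the conditional independence built into $\mathds{A}_n$: given $\D_{k_n}$, the variables $W_{n-k_n}(u,s_nt-S_u)$, $u\in\T_{k_n}$, are independent, $\mathds{A}_n$ is the $\Pi_{k_n}^{-1}$-normalized sum of their $\E_{\xi,k_n}$-centred versions, and $\E_{\xi,k_n}\mathds{A}_n=0$. Since the offspring law carries no second moment, I would truncate at level $\Pi_{k_n}$ and split exactly as in \eqref{eqn-decomposition}: writing $W'_{n-k_n}(u,B)=W_{n-k_n}(u,B)\ind{W_{n-k_n}(u,B)\le \Pi_{k_n}}$, set $\mathds{A}_n=\mathds{A}_n^{(1)}+\mathds{A}_n^{(2)}+\mathds{A}_n^{(3)}$, where $\mathds{A}_n^{(1)}=\Pi_{k_n}^{-1}\sum_{u\in\T_{k_n}}W_{n-k_n}(u,s_nt-S_u)\ind{W_{n-k_n}(u,s_nt-S_u)>\Pi_{k_n}}$ is the tail part, $\mathds{A}_n^{(2)}=\Pi_{k_n}^{-1}\sum_{u\in\T_{k_n}}\big(W'_{n-k_n}(u,s_nt-S_u)-\E_{\xi,k_n}W'_{n-k_n}(u,s_nt-S_u)\big)$ the centred truncated part, and $\mathds{A}_n^{(3)}=-\E_{\xi,k_n}\mathds{A}_n^{(1)}$ the mean correction. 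Two reductions make the estimates tractable: the monotonicity $W_{n-k_n}(u,s_nt-S_u)\le W_{n-k_n}(u,\R)$ reduces every moment to one of the limit approximants $W_m$ under the shifted environment $\theta^{k_n}\xi$; and I would argue quenched (fix a typical $\xi$), using $\E_\xi W_{k_n}=1$ and $\E_\xi Z_{k_n}(\R)=\Pi_{k_n}$, so that the deterministic-in-$\xi$ quantities $\Pi_{k_n}$ and $\ln\Pi_{k_n}$ factor out; $\P_\xi$-a.s. convergence then gives the $\P$-a.s. statement.

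For $\mathds{A}_n^{(2)}$ I would use the conditional variance. By conditional independence $\E_{\xi,k_n}(\mathds{A}_n^{(2)})^2=\Pi_{k_n}^{-2}\sum_u\mathrm{Var}_{\xi,k_n}\big(W'_{n-k_n}(u,s_nt-S_u)\big)\le\Pi_{k_n}^{-2}\sum_u\E_{\xi,k_n}\big(W'_{n-k_n}(u,s_nt-S_u)\big)^2$. Using, exactly as in the proof of Proposition \ref{th1a}, that $x(\ln x)^{-1-\lambda}$ is increasing for $x>e^{2\lambda}$, together with the monotonicity above, the truncated second moment is at most $e^{4\lambda}+\Pi_{k_n}(\ln\Pi_{k_n})^{-1-\lambda}K_{\theta^{k_n}\xi}$, with $K_{\theta^{k_n}\xi}=\E_{\theta^{k_n}\xi}\big[W_{n-k_n}(\ln^+W_{n-k_n})^{1+\lambda}\big]$. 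Taking $\E_\xi$ (so that $W_{k_n}\mapsto1$) yields $\E_\xi(\mathds{A}_n^{(2)})^2\le \Pi_{k_n}^{-1}e^{4\lambda}+(\ln\Pi_{k_n})^{-1-\lambda}K_{\theta^{k_n}\xi}$. Since $\ln\Pi_{k_n}\ge\delta_1 k_n\ge\delta_1' n^{\beta}$ eventually (the law of large numbers for $\ln m_j$, as in Proposition \ref{th1a}) and $K_{\theta^{k_n}\xi}$ grows subpolynomially in $n$ a.s. (see below), we get $n\,\E_\xi(\mathds{A}_n^{(2)})^2\le C_\xi n^{1+o(1)-\beta(1+\lambda)}$ up to a super-exponentially small term, which is summable precisely because $\beta>2/\lambda$ forces $\beta(1+\lambda)>2+2/\lambda$. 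Chebyshev's inequality and Borel--Cantelli under $\P_\xi$ then give $\sqrt n\,\mathds{A}_n^{(2)}\to0$ a.s.

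For $\mathds{A}_n^{(1)}$ and $\mathds{A}_n^{(3)}$ an $L^1$ estimate suffices. Bounding $\ind{x>\Pi_{k_n}}\le(\ln^+x)^{1+\lambda}(\ln\Pi_{k_n})^{-1-\lambda}$ and again passing to $W_{n-k_n}(u,\R)$, I obtain $\E_\xi|\sqrt n\,\mathds{A}_n^{(1)}|\le \sqrt n\,(\ln\Pi_{k_n})^{-1-\lambda}K_{\theta^{k_n}\xi}\le C_\xi n^{1/2+o(1)-\beta(1+\lambda)}$, whose sum converges (the exponent is even smaller than for $\mathds{A}_n^{(2)}$). As $|\mathds{A}_n^{(3)}|\le\E_{\xi,k_n}|\mathds{A}_n^{(1)}|$, the same bound controls $\E_\xi|\sqrt n\,\mathds{A}_n^{(3)}|$. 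Hence $\sum_n\E_\xi|\sqrt n\,\mathds{A}_n^{(1)}|<\infty$ and $\sum_n\E_\xi|\sqrt n\,\mathds{A}_n^{(3)}|<\infty$, so both terms tend to $0$ $\P_\xi$-a.s., and combining the three parts proves \eqref{eq6}.

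The main obstacle lies entirely in the two moment inputs above, both made awkward by the random environment. The first is a uniform bound $\sup_m\E\big[W_m(\ln^+W_m)^{1+\lambda}\big]<\infty$ (with suitable higher annealed moments of the quenched constant $K_\zeta=\E_\zeta[W_m(\ln^+W_m)^{1+\lambda}]$), the random-environment counterpart of the classical $x(\ln^+x)^{1+\lambda}$-moment of a Galton--Watson martingale limit; I would establish it as a separate lemma from \eqref{cbrweq1}. The second, and genuinely delicate, point is that $K_{\theta^{k_n}\xi}$ is \emph{not} bounded uniformly in $n$: by ergodic recurrence $\sup_n K_{\theta^n\xi}=\infty$ a.s., so one cannot simply pull out a constant. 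Instead I would show, via Markov's inequality applied to a high enough annealed moment of $K_\zeta$ followed by Borel--Cantelli, that $K_{\theta^{k_n}\xi}$ grows slower than any fixed power $n^{\varepsilon}$ for a.e.\ $\xi$; the margin $\beta(1+\lambda)>2+2/\lambda$ created by $\beta>2/\lambda$ is exactly what absorbs this $n^{o(1)}$ loss, which is why large $\lambda$ (and $\eta$) are needed. The condition $\E m_0^{-\delta}<\infty$ enters twice: it gives $\E(\ln^-m_0)^{q}<\infty$ for all $q$, controlling the lower deviations of $\ln\Pi_{k_n}$ so that $\ln\Pi_{k_n}\gtrsim n^{\beta}$, and it feeds the high-moment bound on $K_\zeta$. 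Once these environmental estimates are secured, the remainder is a routine adaptation of the truncation scheme already carried out for Propositions \ref{th1a} and \ref{th2a}.
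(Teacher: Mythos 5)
Your skeleton is the same as the paper's: truncation at level $\Pi_{k_n}$, a three-part split (tail part, centred truncated part, mean correction), the reduction $|X_{n,u}|\le W_{n-k_n}(u,\R)+1$, the monotonicity of $x(\ln x)^{-1-\lambda}$ for the truncated second moment, and Borel--Cantelli. The gap is in the environmental moment input on which your whole argument rests. Because you work quenched (conditioning on all of $\xi$), every estimate carries the random constant $K_{\theta^{k_n}\xi}=\E_{\theta^{k_n}\xi}\big[W_{n-k_n}(\ln^+W_{n-k_n})^{1+\lambda}\big]$, and you propose to tame it by proving $K_{\theta^{k_n}\xi}\le n^{\varepsilon}$ eventually a.s.\ ``via Markov's inequality applied to a high enough annealed moment of $K_\zeta$ followed by Borel--Cantelli.'' Under the hypotheses of Theorem \ref{th1} this step fails: condition \eqref{cbrweq1} is a first-order annealed moment assumption, so at best it can yield $\E K_\zeta<\infty$ (and even that needs a nontrivial Bingham--Doney type lemma in random environment; the result actually available, Lemma \ref{lem5} quoted from \cite{LiangLiu10}, gives the exponent $\lambda$, not $1+\lambda$, and for $W^*=\sup_nW_n$). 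With only a first moment, Markov gives $\P\big(K_{\theta^{k_n}\xi}>n^{\varepsilon}\big)\le C n^{-\varepsilon}$, while the $\varepsilon$ your variance estimate can absorb is at most $\beta(1+\lambda)-2<\frac{1+\lambda}{4}-2$, which is smaller than $1$ whenever $\lambda<11$; since the theorem only assumes $\lambda>8$, the series in your Borel--Cantelli step diverges and the claimed $n^{o(1)}$ growth of $K_{\theta^{k_n}\xi}$ is not established. Higher annealed moments $\E K_\zeta^{p}<\infty$ with $p>1/\varepsilon$ would indeed close this, but nothing in \eqref{cbrweq1} or $\E m_0^{-\delta}<\infty$ provides them.

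The paper's proof avoids this obstacle by a device your proposal misses: it conditions not on the whole environment but on $\I_{k_n}$, which contains $\xi_0,\dots,\xi_{k_n-1}$ and the genealogy up to generation $k_n$, yet leaves the post-$k_n$ environment random. Under $\P_{k_n}=\P(\cdot\,|\,\I_{k_n})$, each $W_{n-k_n}(u,\cdot)$, $u\in\T_{k_n}$, has the \emph{annealed} law of $W_{n-k_n}$, so every moment appearing in the three steps is a single deterministic constant, bounded through $\E\big[(W^*+1)(\ln(W^*+1))^{\lambda}\big]<\infty$ (Lemma \ref{lem5}); independence across $u$ is still available where needed by first applying Chebyshev under $\P_{\xi,k_n}$ and then averaging with $\E_{k_n}$. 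The a.s.\ conclusion is then drawn from the \emph{extended} (conditional) Borel--Cantelli lemma applied to $\sum_n\P_{k_n}\big(|\sqrt n\,\mathds{A}_n|>2\varepsilon\big)$, which requires exactly the kind of conditional bounds the paper derives. (Your scheme could in fact be repaired with only $\E K_\zeta<\infty$, by exploiting that $\Pi_{k_n}$ and $K_{\theta^{k_n}\xi}$ are independent and summing annealed expectations of your three series instead of seeking a.s.\ polynomial bounds on $K_{\theta^{k_n}\xi}$; but as written, the key environmental estimate your argument relies on is not available under the stated hypotheses.)
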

\begin{lem}\label{lem2} Under the hypothesis of  Theorem \ref{th1},
\begin{equation}\label{eq7}
  \sqrt{n} \mathds{B}_n  \xrightarrow{n \rightarrow \infty } \frac{1}{6}{\E \sigma_0^{(3)}}{(\E \sigma_0^{(2)})^{-\frac{3}{2}} }(1-t^2)\phi(t) W-(\E \sigma_0^{(2)})^{-\frac{1}{2}}\phi(t) \, V_1  \mbox{ a.s. }
\end{equation}
\end{lem}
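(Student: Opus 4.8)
The plan is to turn $\mathds{B}_n$ into an Edgeworth expansion for the shifted positions of the $\Pi_{k_n}$ particles of generation $k_n$, and then to read off the limit from the almost-sure convergence of the martingales $N_{1,k_n}\to V_1$, $W_{k_n}\to W$ and the ergodic averages of the environment. First I would identify the inner conditional expectation. Given $\D_{k_n}$, the positions $S_u$ ($|u|=k_n$) are known and each subtree is an independent branching random walk in the shifted environment $\theta^{k_n}\xi$; the first-moment (many-to-one) formula gives
\[
\E_{\xi,k_n} W_{n-k_n}(u,s_n t-S_u)=\P_{\theta^{k_n}\xi}\!\Big(\textstyle\sum_{j=k_n}^{n-1}\widehat L_j\le s_n t-S_u\Big)=:F_{k_n,n}(s_n t-S_u),
\]
where the $\widehat L_j$ are independent, centered (recall $\ell_n=0$), with variance sum $B_{k_n,n}^2:=s_n^2-s_{k_n}^2=\sum_{j=k_n}^{n-1}\sigma_j^{(2)}$ and third-moment sum $s_n^{(3)}-s_{k_n}^{(3)}$.

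Next I would apply Lemma \ref{lem-Edge-exp} with $k=3$ to $F_{k_n,n}$, yielding
\[
F_{k_n,n}(x)=\Phi\!\Big(\tfrac{x}{B_{k_n,n}}\Big)-\phi\!\Big(\tfrac{x}{B_{k_n,n}}\Big)H_2\!\Big(\tfrac{x}{B_{k_n,n}}\Big)\frac{s_n^{(3)}-s_{k_n}^{(3)}}{6\,B_{k_n,n}^3}+R_n(x),
\]
with $R_n$ controlled by the truncated-moment and characteristic-function terms of the lemma. The two scales that drive everything are $\rho_n:=s_{k_n}^2/s_n^2=O(k_n/n)=O(n^{\beta-1})\to0$ and $S_u/s_n$, which for $|u|=k_n$ is of order $\sqrt{k_n}/\sqrt n=O(n^{(\beta-1)/2})\to0$. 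Writing $x=s_n t-S_u$ and Taylor-expanding $\Phi(\cdot/B_{k_n,n})$ and $\phi H_2(\cdot/B_{k_n,n})$ about $t$ in powers of $S_u/s_n$ and $\rho_n$, I would substitute into
\[
\mathds{B}_n=\frac{1}{\Pi_{k_n}}\sum_{u\in\T_{k_n}}\big[F_{k_n,n}(s_n t-S_u)-\Phi(t)\big],
\]
so that each power of $S_u$ is converted, after dividing by $\Pi_{k_n}$, into $W_{k_n}$, $N_{1,k_n}$, or $\Pi_{k_n}^{-1}\sum_u S_u^2=s_{k_n}^2W_{k_n}-N_{2,k_n}$.

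Multiplying by $\sqrt n$, exactly two contributions survive. The linear term of $\Phi(\cdot/B_{k_n,n})$ contributes $-\phi(t)\frac{\sqrt n}{s_n}N_{1,k_n}$, which converges a.s.\ to $-(\E\sigma_0^{(2)})^{-1/2}\phi(t)V_1$ using $N_{1,k_n}\to V_1$ and the ergodic limit $s_n^2/n\to\E\sigma_0^{(2)}$. The skewness term contributes $-\phi(t)H_2(t)\,\frac{\sqrt n\,(s_n^{(3)}-s_{k_n}^{(3)})}{6B_{k_n,n}^3}W_{k_n}$, and with $H_2(t)=t^2-1$, $s_n^{(3)}/n\to\E\sigma_0^{(3)}$, $B_{k_n,n}^2/n\to\E\sigma_0^{(2)}$ and $W_{k_n}\to W$ this tends a.s.\ to $\frac16\E\sigma_0^{(3)}(\E\sigma_0^{(2)})^{-3/2}(1-t^2)\phi(t)W$, giving precisely \eqref{eq7}. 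Every remaining piece is negligible after the $\sqrt n$ scaling: the drift term $\tfrac12 t\rho_n\phi(t)W_{k_n}$ and the quadratic term $-\tfrac12 t\phi(t)\rho_n W$ are both $O(n^{\beta-1/2})\to0$ since $\beta<1/4$, while the corrections to $x=t$ inside the skewness term are $O(n^{\beta-1})$ after summation.

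The main obstacle is showing that the Edgeworth remainder $R_n$ vanishes after the $\sqrt n$-scaling and the summation over the $\approx\Pi_{k_n}$ shifts $s_n t-S_u$, almost surely. This is where the hypotheses are used: the Cramér condition in \eqref{cbrweq2-3} kills the oscillatory characteristic-function term, the moment bound $\E|\widehat L_0|^\eta<\infty$ with $\eta>12$ controls the truncated moments $\E|Z_{nj}^{(x)}|^{4}$ and $\E|W_{nj}^{(x)}|^{3}$ uniformly, and the window $\max\{2/\lambda,3/\eta\}<\beta<\tfrac14$ is exactly what makes the bound summable after multiplication by $\sqrt n\,\Pi_{k_n}^{-1}\sum_u(\cdot)$. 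One must also invoke the ergodicity of $(\xi_n)$ to justify the a.s.\ convergence of the random coefficients $s_n^2/n$ and $s_n^{(3)}/n$; the remaining estimates are routine once these approximations are in place.
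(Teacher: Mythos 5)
Your skeleton coincides with the paper's own proof: the many-to-one identity $\E_{\xi,k_n}W_{n-k_n}(u,s_nt-S_u)=\P_{\xi}\big(\sum_{j=k_n}^{n-1}\widehat L_j\le s_nt-S_u\big)$, the Bai--Zhao expansion of Lemma \ref{lem-Edge-exp} with the single skewness correction $\kappa_{1,n}D_1$, a Taylor expansion about $t$, and the limits $N_{1,k_n}\to V_1$, $W_{k_n}\to W$, $s_n^2/n\to\E\sigma_0^{(2)}$, $\sqrt n\,\kappa_{1,n}\to\frac16\E\sigma_0^{(3)}(\E\sigma_0^{(2)})^{-3/2}$. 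Your identification of the two surviving terms and of their limits agrees with \eqref{cbrweq3-18} and \eqref{cbrweq3-20}.

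There is, however, a genuine gap in the step where you discard the remaining Taylor terms. You justify it by the claim that $S_u/s_n=O(\sqrt{k_n}/\sqrt n)$ for $|u|=k_n$. That bound is false uniformly in $u$: there are roughly $\Pi_{k_n}$ (exponentially many in $k_n$) particles, and the displacements are only assumed to have a finite moment of order $\eta$, so $\max_{u\in\T_{k_n}}|S_u|$ is typically of order $\Pi_{k_n}^{1/\eta}$ already from the last-generation displacements alone; for such particles the argument $\frac{s_nt-S_u}{(s_n^2-s_{k_n}^2)^{1/2}}$ is nowhere near $t$ and a pointwise expansion is meaningless. The paper resolves exactly this point by splitting every sum over $\{|S_u|\le k_n\}$ and $\{|S_u|>k_n\}$ (the terms $\mathds{B}_{n21},\mathds{B}_{n22},\mathds{B}_{n23}$) and proving the a.s.\ tail estimates \eqref{cbrweq3.22} by a Borel--Cantelli/moment argument; that is where the constraint $\beta>3/\eta$ (hence $\eta>12$, so that the window below $1/4$ is nonempty) is actually consumed. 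Your plan can be repaired without truncation, since you already have the right objects in hand: use the globally valid bounds $|\Phi(z)-\Phi(t)-\phi(t)(z-t)|\le C|z-t|^2$ and $|D_1(z)-D_1(t)|\le C\min(1,|z-t|)$, and then average over particles via $\Pi_{k_n}^{-1}\sum_{u\in\T_{k_n}}S_u^2=s_{k_n}^2W_{k_n}-N_{2,k_n}=O(k_n)$ a.s.\ (Proposition \ref{th2a}, whose hypotheses hold under those of Theorem \ref{th1}), which makes the total remainder $O(k_n/n)=o(n^{-1/2})$ a.s.; but this averaging argument must be made explicitly, and it does not follow from your pointwise claim. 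Relatedly, you locate the main difficulty in the Edgeworth remainder, which is a misattribution: as in Proposition \ref{prop4.5}, that remainder only requires a sup-norm bound $\epsilon_n=o(1)$ for a.e.\ $\xi$ (fourth moments plus the Cram\'er condition suffice), needs no summability over $n$ and no use of the $\beta$-window, because summation over $u\in\T_{k_n}$ merely produces the factor $W_{k_n}$.
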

\begin{lem}\label{lem3} Under the hypothesis of  Theorem \ref{th1},
\begin{equation}\label{eq8}
\sqrt{n}\mathds{C}_n \xrightarrow{n \rightarrow \infty } 0 \mbox{ a.s. }
\end{equation}
\end{lem}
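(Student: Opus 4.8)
The plan is to reduce Lemma \ref{lem3} to a polynomial rate of convergence for the natural martingale $W_n\to W$. Since $\Phi(t)$ is a fixed constant and $k_n=\lfloor n^\beta\rfloor$ satisfies $n^\beta\le 2k_n$ for all $n$ large, we have $\sqrt{n}\le 2^{1/(2\beta)}k_n^{\gamma}$ with $\gamma:=\tfrac{1}{2\beta}>2$ (because $\beta<1/4$). Hence it suffices to prove the a.s.\ statement $k^{\gamma}(W_k-W)\to 0$ and then specialize to the subsequence $k=k_n$. The standing constraints $\beta>2/\lambda$ and $\beta<1/4$ give $2\gamma=1/\beta<\lambda/2<\lambda$, and this single inequality is what will make all the exponents appearing below summable.

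Writing $D_j:=W_{j+1}-W_j=\Pi_j^{-1}\sum_{u\in\T_j}\widetilde{X}_u$ with $\widetilde{X}_u:=N_u/m_j-1$ for $|u|=j$, we have $W-W_k=\sum_{j\ge k}D_j$, so I would first show that the weighted series $\sum_j j^{\gamma}D_j$ converges a.s. This I would do exactly as in the proof of Proposition \ref{th1a}: truncate $\widetilde{X}_u':=\widetilde{X}_u\ind{|\widetilde{X}_u|\le\Pi_j}$, set $D_j':=\Pi_j^{-1}\sum_{u\in\T_j}\widetilde{X}_u'$, and split
\[
 \sum_j j^{\gamma}D_j=\sum_j j^{\gamma}(D_j-D_j')+\sum_j j^{\gamma}\big(D_j'-\E_{\xi,\F_j}D_j'\big)+\sum_j j^{\gamma}\E_{\xi,\F_j}D_j'.
\]
The crucial input is the analogue of Lemma \ref{lem6} for $\widetilde{X}_u$: because $\widetilde{X}_u$ carries no displacement $S_u$, the generic variable $\widehat{\widetilde{X}}_j$ obeys
\[
 \E_\xi|\widehat{\widetilde{X}}_j|\,(\ln^+|\widehat{\widetilde{X}}_j|)^{1+\lambda}\le K_\xi\Big(\E_\xi\tfrac{\widehat{N}_j}{m_j}(\ln^+\widehat{N}_j)^{1+\lambda}+(\ln^- m_j)^{1+\lambda}+1\Big),
\]
\emph{with no polynomial factor in $j$}; this follows at once from $|\widetilde{X}_u|\le 1+N_u/m_j$ and part (a) of Lemma \ref{lemma2-1}.

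Using $\ln\Pi_j/j\to\E\ln m_0>0$, so that $\ln\Pi_j>\delta_1 j$ eventually, the first and third series are controlled by $\E_\xi|D_j-D_j'|\le(\ln\Pi_j)^{-(1+\lambda)}\E_\xi|\widehat{\widetilde{X}}_j|(\ln^+|\widehat{\widetilde{X}}_j|)^{1+\lambda}=O\big(j^{-(1+\lambda)}(\cdots)\big)$; taking annealed expectation as in the argument for \eqref{cbrweq3.2} yields $\sum_j j^{\gamma}\E_\xi|D_j-D_j'|<\infty$ a.s.\ since $\gamma-(1+\lambda)<-1$, and the third series is dominated by the same bound because $\E_{\xi,\F_j}D_j=0$. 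For the middle series, the martingale $\sum_{j\le K}j^{\gamma}(D_j'-\E_{\xi,\F_j}D_j')$ has conditional second moment bounded, as in Proposition \ref{th1a}, by
\[
 \sum_j j^{2\gamma}\,\E_\xi\big(D_j'-\E_{\xi,\F_j}D_j'\big)^2\le\sum_j j^{2\gamma}\Big(\tfrac{e^{4\lambda}}{\Pi_j}+\tfrac{1}{(\ln\Pi_j)^{1+\lambda}}\E_\xi|\widehat{\widetilde{X}}_j|(\ln^+|\widehat{\widetilde{X}}_j|)^{1+\lambda}\Big),
\]
which is a.s.\ finite: the first part because $\Pi_j$ grows exponentially, the second because $2\gamma-(1+\lambda)<-1$, i.e.\ $2\gamma<\lambda$. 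Thus $\sum_j j^{\gamma}D_j$ converges a.s.

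Finally I would convert this into the tail bound $k^{\gamma}(W-W_k)\to 0$ by an Abel summation (Kronecker-type) argument: setting $c_j=j^{\gamma}D_j$ and $U_k=\sum_{j\ge k}c_j\to 0$, one has $k^{\gamma}(W-W_k)=k^{\gamma}\sum_{j\ge k}j^{-\gamma}c_j$, and summation by parts together with $U_j\to 0$ gives $|k^{\gamma}(W-W_k)|\le \big(\sup_{j\ge k}|U_j|\big)\,O(1)\to 0$. Combined with the reduction of the first paragraph this gives $\sqrt{n}\,\mathds{C}_n\to 0$ a.s. The only genuinely delicate point is the middle (martingale) series: the absence of a second moment for the offspring law forces the truncation, and the argument closes precisely because the choice $\beta>2/\lambda$ guarantees $2\gamma<\lambda$.
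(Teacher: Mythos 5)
Your proof is correct, but it follows a genuinely different route from the paper's. The paper settles Lemma \ref{lem3} in two lines by citing an external result, Proposition \ref{pro3} of Huang and Liu \cite{HuangLiu}, which gives the much stronger rate $W-W_n=o(n^{-\lambda})$ a.s.\ under \eqref{cbrweq1}; since $\lambda\beta>2$, this yields $\sqrt{n}\,(W-W_{k_n})=o(n^{1/2-\lambda\beta})\to 0$ at once. You instead prove, self-containedly, the weaker but sufficient rate $k^{\gamma}(W-W_k)\to 0$ a.s.\ with $\gamma=1/(2\beta)$, by applying the truncation-and-decomposition machinery of Proposition \ref{th1a} to the increments $D_j=W_{j+1}-W_j=\Pi_j^{-1}\sum_{u\in\T_j}(N_u/m_j-1)$ weighted by $j^{\gamma}$, and then converting the a.s.\ convergence of $\sum_j j^{\gamma}D_j$ into the tail bound by summation by parts (the telescoping identity $k^{\gamma}\sum_{j>k}\big[(j-1)^{-\gamma}-j^{-\gamma}\big]=1$ makes this clean). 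The exponent bookkeeping is sound: $\beta>2/\lambda$ gives $2\gamma=1/\beta<\lambda/2$, and $2\gamma<\lambda$ (resp.\ $\gamma<\lambda$) is exactly what makes the quadratic (resp.\ the two first-moment) series summable. What your route buys is independence from \cite{HuangLiu}: the lemma is proved using only tools already present in Section \ref{sec5}. What the paper's route buys is brevity and a far stronger rate, imported from the branching-process-in-random-environment literature. One step you pass over too quickly: your key inequality $\E_\xi|\widehat{\widetilde{X}}_j|\,(\ln^+|\widehat{\widetilde{X}}_j|)^{1+\lambda}\le K_\xi\big(\E_\xi\tfrac{\widehat{N}_j}{m_j}(\ln^+\widehat{N}_j)^{1+\lambda}+(\ln^- m_j)^{1+\lambda}+1\big)$ does not follow from Lemma \ref{lemma2-1}(a) alone, because expanding $(1+N_u/m_j)\big(\ln^+(1+N_u/m_j)\big)^{1+\lambda}$ leaves the term $\E_\xi(\ln^+N_u)^{1+\lambda}$, which is not yet of the required form (bounding it by $K_\lambda m_j$ via Lemma \ref{lemma2-1}(b) would be useless since $m_j$ is unbounded). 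The claim is nevertheless true and easily repaired: either write $1+N_u/m_j\le 2\max\{1,N_u/m_j\}$, so that the logarithmic factor only ever multiplies $(N_u/m_j)\ind{N_u>m_j}$, or bound $\E_\xi(\ln^+N_u)^{1+\lambda}\le \E_\xi\tfrac{N_u}{m_j}(\ln^+N_u)^{1+\lambda}+(\ln^+ m_j)^{1+\lambda}$ and then note $(\ln^+ m_j)^{1+\lambda}\le\E_\xi\tfrac{N_u}{m_j}(\ln^+N_u)^{1+\lambda}$ by Jensen's inequality (convexity of $x(\ln^+x)^{1+\lambda}$). With that one-line patch your argument is complete.
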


Now we go to prove the lemmas subsequently.

\begin{proof}[Proof of  Lemma \ref{lem1}]
For ease of  notation, we define for $|u|=k_n$,
\begin{align*}
    &  X_{n,u}= W_{n-k_n}(u,s_n t-S_u) - \E_{\xi, k_n}W_{n-k_n}(u,s_n t-S_u), ~~\bar{X}_{n,u} =  X_{n,u} \mathbf{1}_{\{|X_{n,u}| <\Pi_{k_n}\}}, \\
     & \bar{A}_{n} =   \frac{1}{\Pi_{k_n}} \sum_{u\in \T_{k_n} } \bar{X}_{n,u}.
\end{align*}
Then we see that $ |X_{n,u}|\leq W_{n-k_n}(u)+1$.

To prove Lemma \ref{lem1}, we will use the extended Borel-Cantelli Lemma. We can obtain the required result once we prove that $\forall \varepsilon>0$,
\begin{equation}\label{eq10}
  \sum_{n=1}^{\infty} \P_{k_n} (|\sqrt{n} A_n| >2 \varepsilon) <\infty.
\end{equation}
Notice that \begin{eqnarray*}
              && \P_{k_n}(|A_n| >2\frac{\varepsilon}{\sqrt{n}})    \\
              &\leq&  \P_{k_n} (A_n\neq \bar{A}_n ) +\P_{k_n} (|\bar{A}_n - \E_{\xi, k_n} \bar{A}_n| > \frac{\varepsilon}{\sqrt{n}} )  +\P_{k_n} ( |\E_{\xi, k_n} \bar{A}_n| >\frac{\varepsilon}{\sqrt{n}}).
            \end{eqnarray*}
We will proceed the proof in 3 steps.\\
{\bf Step 1 } We first prove that
\begin{equation}\label{cbrweq3-7}
 \sum_{n=1}^{\infty}{\P}_{k_n} (A_n\neq \overline{A}_n) <\infty.
\end{equation}
To this end,  define  $$W^*=\sup_n W_n,$$
and we need the following result  :
\begin{lem}(\cite[Th.  1.2]{LiangLiu10})\label{lem5}
  Assume    \eqref{cbrweq1}  for some $\lambda>0$  and $\E m_0^{-\delta}<\infty $  for some $\delta>0$.  Then
  \begin{equation}\label{cbrweq17a}
    {\E}(W^*+1)(\ln (W^*+1))^{\lambda} <\infty.
\end{equation}
\end{lem}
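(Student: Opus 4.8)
The plan is to prove \eqref{cbrweq17a} by a truncation and martingale argument in the same spirit as Proposition \ref{th1a}, but organised around a recursive smoothing-transform inequality for the supremum itself. Writing $W^{*(i)}$ for conditionally i.i.d.\ copies of $W^*$ governed by the shifted environment $\theta\xi$, the branching decomposition $W_n=m_0^{-1}\sum_{i=1}^{\widehat N_0}W_{n-1}^{(i)}$ gives, after taking the supremum over $n\geq 1$ and including $W_0=1$,
\begin{equation*}
 W^*\leq 1+\frac{1}{m_0}\sum_{i=1}^{\widehat N_0}W^{*(i)}.
\end{equation*}
By Lemma \ref{lemma2-1}(a) (using $\ln(1+x)\leq 1+\ln^+x$) the assertion \eqref{cbrweq17a} is equivalent to $\E\,W^*(\ln^+W^*)^{\lambda}<\infty$, so it suffices to control the latter through this recursion, exactly as $\sum I_n$ was controlled through $X_u$ in Proposition \ref{th1a}.

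The main obstacle is that the linear part of the recursion is \emph{exactly critical}: since $\E_\xi\big(m_0^{-1}\sum_{i}W^{*(i)}\big)=\E_{\theta\xi}W^*$, the first-order term reproduces $W^*$ with coefficient one, and a naive estimate closes to the vacuous inequality $\E W^*\leq 1+\E W^*$, extracting no gain. The whole statement therefore rests on the \emph{logarithmic} correction carried by $(\ln^+)^{\lambda}$, and the way I would capture it is to exploit that the associated environment walk $\ln\Pi_n$ has positive drift, because $\E\ln m_0>0$. Equivalently, I would pass to the size-biased (spine) measure $\widehat\P$ defined by $d\widehat\P/d\P|_{\F_n}=W_n$: under $\widehat\P$ the martingale limit is read off along a single spine whose offspring variable is $\widehat N_0$ reweighted by $\widehat N_0/m_0$, so that $\E\,W^*(\ln^+W^*)^{\lambda}<\infty$ reduces to the $\widehat\P$-integrability of a functional of the spine and of the walk $\ln\Pi_n$. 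The positive drift forces this walk to $+\infty$, and the extra power $(\ln^+\widehat N_0)^{1+\lambda}$ appearing in \eqref{cbrweq1} is precisely what makes the relevant series summable; this is also where $\E m_0^{-\delta}<\infty$ is used, to keep the lower fluctuations of $\Pi_n$ (equivalently $\E(\ln^-m_0)^{\kappa}<\infty$ for all $\kappa>0$) under control.

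On the concrete analytic side, I would run the estimate on the truncated quantity $\E_\xi\Psi(W^*\wedge t)$ with $\Psi(x)=x(\ln^+x)^{\lambda}$. Applying $\Psi$ to the recursion and expanding $\Psi$ of the sum leads to terms of exactly the type handled for $\J_4$ and $\K_5$: Lemma \ref{lemma2-1}(c) permits moving $\E_\xi(\,\cdot\mid\widehat N_0)$ inside $\big(\ln(e^{\lambda}+\,\cdot\,)\big)^{1+\lambda}$ by Jensen's inequality, after which the contribution of the offspring number is dominated by the moment hypothesis $\E\frac{\widehat N_0}{m_0}(\ln^+\widehat N_0)^{1+\lambda}<\infty$, while the random normalisation $1/m_0$ is absorbed using $\E m_0^{-\delta}<\infty$. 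The super-linear excess produced this way is what the spine/drift computation of the previous paragraph renders summable.

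Finally, I would proceed by a bootstrap. First run the argument to obtain the plain bound $\E W^*<\infty$ (the Kesten--Stigum step for the branching process in a random environment), which already requires the spine control of the critical linear term. With finiteness in hand, the estimates above bound $\E_\xi\Psi(W^*\wedge t)$ uniformly in $t$, and monotone convergence as $t\to\infty$ yields $\E\,W^*(\ln^+W^*)^{\lambda}<\infty$, hence \eqref{cbrweq17a}. Throughout, the role of the random environment is confined to keeping every environment-dependent constant $K_\xi$ integrable, which is exactly why the negative-moment assumption $\E m_0^{-\delta}<\infty$ is imposed alongside \eqref{cbrweq1}.
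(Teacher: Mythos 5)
The paper does not prove Lemma \ref{lem5} at all: it is imported verbatim from \cite[Th.~1.2]{LiangLiu10}, so your attempt is necessarily an independent reconstruction, and it has a genuine gap at its central step. Your recursion $W^*\leq 1+\frac{1}{m_0}\sum_{i=1}^{\widehat N_0}W^{*(i)}$ is correct, and you correctly diagnose that it is exactly critical; but your proposed cure does not apply to the quantity at hand. The size-biased measure $\widehat\P$ with $d\widehat\P/d\P|_{\F_n}=W_n$ linearizes only expectations whose weight is the Radon--Nikodym derivative itself, i.e. $\E[W_n f(\cdot)]=\widehat\E[f(\cdot)]$; this is why the spine method, together with the positive drift of $\ln\Pi_n$, proves weighted moments of the \emph{limit}, $\E[W(\ln^+W)^{\lambda}]<\infty$. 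The quantity in \eqref{cbrweq17a}, however, is (after the harmless normalization via Lemma \ref{lemma2-1}(a)) $\E[W^*(\ln^+W^*)^{\lambda}]$, and $W^*=\sup_nW_n$ is neither the Radon--Nikodym derivative on any $\F_n$ nor $\F_n$-measurable, so there is no identity expressing this expectation as a $\widehat\P$-integral of a spine functional. The sentence ``reduces to the $\widehat\P$-integrability of a functional of the spine'' is thus precisely the missing step, and the criticality you identified bites there: applying $\Psi(x)=x(\ln^+x)^{\lambda}$ to the recursion and taking (quenched, then annealed, using stationarity of $\theta$) expectations returns $\E\Psi(W^*\wedge t)$ on the right-hand side with leading coefficient exactly one, so the truncated bootstrap never closes, no matter how well the correction terms are controlled by \eqref{cbrweq1}, Jensen's inequality and $\E m_0^{-\delta}<\infty$. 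A further inaccuracy: the preliminary claim ``$\E W^*<\infty$ (the Kesten--Stigum step)'' conflates two different statements --- Kesten--Stigum (Athreya--Karlin, Tanny) gives $\E W=1$, not integrability of the supremum, and Doob's $L\log L$ inequality would require $\sup_n\E W_n\ln^+W_n<\infty$, i.e. essentially $\lambda\geq1$ in \eqref{cbrweq1}, so for $0<\lambda<1$ even this first step is unavailable from general martingale theory.

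What a correct proof needs, and what your outline never supplies, is a mechanism transferring weighted moments from $W$ to $W^*$ \emph{without} the logarithmic loss inherent in Doob-type maximal inequalities. The chain your tools do support is: \eqref{cbrweq1} with exponent $1+\lambda$ $\Rightarrow$ (spine argument) $\E W(\ln^+W)^{\lambda}<\infty$ $\Rightarrow$ (Doob maximal inequality plus the weak $(1,1)$ bound $\E(W^*)^{p}<\infty$, $p<1$) $\E W^*(\ln^+W^*)^{\lambda-1}<\infty$ --- one logarithm short of \eqref{cbrweq17a}. The branching structure is what removes this loss: letting $T=\inf\{n:W_n>t\}$, on $\{T<\infty\}$ one has $W=W_T\cdot\frac{1}{Z_T}\sum_{u\in\T_T}W^{(u)}$ with the $W^{(u)}$ conditionally i.i.d.\ of mean one, whence a tail comparison of the form $\P(W^*>t)\leq C\,\P(W>\varepsilon t)$ once the conditional probability that such an average exceeds $\varepsilon$ is bounded below with enough uniformity in the environment --- this is where $\E m_0^{-\delta}<\infty$ genuinely enters, not merely to integrate constants $K_\xi$. (Alternatively one can use the double-martingale/convex-inequality route of Alsmeyer--Kuhlbusch, which is essentially what Liang and Liu adapt to the random environment.) With such a comparison, \eqref{cbrweq17a} follows from the weighted moment bound on $W$; without it, your argument cannot reach the stated exponent $\lambda$.
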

We observe that
\begin{align*}
      \P_{k_n}(A_n\neq \overline{A}_n )&\leq \sum_{u\in \T_{k_n} } \P_{k_n}(X_{n,u}\neq \overline{X}_{n,u} )  =  \sum_{u\in \T_{k_n} } \P_{k_n}(|X_{n,u}|\geq \Pi_{k_n}) \\&\leq \sum_{u\in \T_{k_n} }  \P_{k_n} ( W_{n-k_n} (u)+1 \geq \Pi_{k_n})\\ &=  W_{k_n} \Big[r_n \P( W_{n-k_n}+1 \geq r_n )\Big]_{r_n=\Pi_{k_n}}
   \\&\leq W_{k_n} \Big[\E\big((W_{n-k_n}+1 ) \mathbf{1}_{ \{W_{n-k_n}+1 \geq r_n\}} \big)\Big]_{r_n=\Pi_{k_n}}
  \\ &\leq W_{k_n} \Big[\E\big((W^*+1 ) \mathbf{1}_{ \{W^*+1 \geq r_n\}} \big)\Big]_{r_n=\Pi_{k_n}}
  \\   & \leq  W^*(\ln \Pi_{k_n})^{-\lambda}\E (W^*+1)(\ln (W^*+1))^{\lambda}
  \\ &\leq K_\xi W^* n^{-\lambda \beta} \E (W^*+1)(\ln (W^*+1))^{\lambda},
\end{align*}
where the last inequality holds since
\begin{equation}\label{cbrweq4.9}
\frac{1}{n}  \ln \Pi_{n} \rightarrow \E\ln m_0>0   \mbox{ a.s. },
\end{equation}
 and  $k_n\sim n^{\beta}$.
By the choice of $\beta$ and Lemma \ref{lem5}, we obtain  \eqref{cbrweq3-7}.

\medskip
\noindent {\bf Step 2}.  We next prove that $\forall \varepsilon>0$,
\begin{equation}\label{cbrweq3-8}
 \sum_{n=1}^{\infty}{\P}_{k_n} ( |\overline{A}_n -{\E}_{\xi,k_n} \overline{A}_n|>\frac{ \varepsilon}{\sqrt{n}}) <\infty.
\end{equation}

Take  a constant $b \in  (1, e^{\E\ln m_0})$. Observe  that $\forall u\in \T_{k_n}, n\geq 1$,
\begin{eqnarray*}
   \E_{k_n} \bar{X}^2_{n,u} &=&  \int_{0}^\infty 2x\P_{k_n} (|\bar{X}_{n,u}|>x  )  dx
    =  2\int_0^\infty x\P_{k_n}  ( |{X}_{n,u} | \mathbf{1 }_{ \{|{X}_{n,u} |<\Pi_{k_n}  \}} >x ) dx\\
   &\leq & 2\int_0^{\Pi_{k_n}} x\P_{k_n} ( | W_{n-k_n}(u)+1 | >x) dx= 2\int_0^{\Pi_{k_n}} x\P ( | W_{n-k_n}+1 | >x) dx \\
    &\leq&  2 \int_0^{\Pi_{k_n} }  x\P( W^*+1 >x) dx \\
    & \leq& 2   \int_e^{\Pi_{k_n} } (\ln x)^{-\lambda} \E (W^*+1)(\ln(W^*+1) )^{\lambda}  dx + 9\\
    &\leq & 2 \E (W^*+1)(\ln(W^*+1) )^{\lambda} \left(\int_e^{b^{k_n}} (\ln x)^{-\lambda} dx + \int_{b^{k_n}}^{\Pi_{k_n}} (\ln x)^{-\lambda} dx \right)+9\\
    & \leq & 2 \E (W^*+1)(\ln(W^*+1) )^{\lambda}(b^{k_n}  + (\Pi_{k_n}-b^{k_n}) (k_n\ln b  )^{-\lambda} )+9.
\end{eqnarray*}
Then  we have that
\begin{align*}
&~ ~~\sum_{n=1}^{\infty} {\P}_{k_n}(|\overline{A}_n-{\E}_{\xi,k_n} \overline{A}_n|>\frac{\varepsilon}{\sqrt{n}})\\  &=
 \sum_{n=1}^{\infty} {\E}_{k_n}{\P}_{\xi,k_n} (|\overline{A}_n-{\E}_{\xi,k_n} \overline{A}_n|>\frac{\varepsilon}{\sqrt{n}}) \\
    &\leq \varepsilon^{-2}\sum_{n=1}^{\infty} n{\E}_{k_n}\left( {\Pi_{k_n}^{-2}}    \sum_{u\in \T_{k_n}}{\E}_{\xi,k_n} \overline{X}_{n,u}^2\right)= \varepsilon^{-2} \sum_{n=1}^{\infty}n \left( {\Pi_{k_n}^{-2} }    \sum_{u\in \T_{k_n}}{\E}_{k_n} \overline{X}_{n,u}^2\right)\\
   & \leq \varepsilon^{-2} \sum_{n=1}^{\infty} \frac{nW_{k_n}}{\Pi_{k_n}}  \big [2 \E (W^*+1)(\ln(W^*+1)^{\lambda} )(b^{k_n}  + (\Pi_{k_n}-b^{k_n}) (k_n\ln b  )^{-\lambda} )+9\big ]\\
    & \leq  2\varepsilon^{-2}W^* \E (W^*+1)(\ln(W^*+1)^{\lambda} ) \bigg(  \sum_{n=1}^{\infty} \frac{n}{\Pi_{k_n}}b^{k_n} + \sum_{n=1}^{\infty} n (k_n\ln b  )^{-\lambda}   \bigg) +9\varepsilon^{-2}W^*  \sum_{n=1}^{\infty} \frac{n}{\Pi_{k_n}}.
\end{align*}
By \eqref{cbrweq4.9} and $\lambda \beta >2$, the three series in the last expression above converge under our hypothesis and hence \eqref{cbrweq3-8} is proved.

\medskip
\noindent {\bf Step 3.} Observe
\begin{eqnarray*}
 & & \P_{k_n} \Bigg(| \E_{\xi,k_n} \bar{A}_n | > \frac{\varepsilon}{\sqrt{n}} \Bigg )  \\
   & \leq &  \frac{\sqrt{n}}{\varepsilon}  \E_{k_n}  | \E_{\xi,k_n} \bar{A}_n |
   = \frac{\sqrt{n}}{\varepsilon} \E_{k_n} \Big|   \frac{1}{\Pi_{k_n} }   \sum_{u\in \T_{k_n}}  \E_{\xi,k_n} \bar{X}_{n,u} \Big|\\ & = &\frac{\sqrt{n}}{\varepsilon} \E_{k_n} \Big|   \frac{1}{\Pi_{k_n} }   \sum_{u\in \T_{k_n}}  (- \E_{\xi,k_n } X_{n,u} \mathbf{1}_{ \{ |X_{n,u}| \geq \Pi_{k_n}\}} ) \Big|
   \\ & \leq &  \frac{\sqrt{n}}{\varepsilon} \frac{1}{\Pi_{k_n} }   \sum_{u\in \T_{k_n}}  \E_{k_n } ( W_{n-k_n} (u) +1) \mathbf{1}_{ \{ W_{n-k_n}(u)+1 \geq \Pi_{k_n}\}} \\ & = &    \frac{\sqrt{n}W_{k_n}}{\varepsilon} \Big[ \E ( W_{n-k_n} +1) \mathbf{1}_{ \{ W_{n-k_n}+1 \geq r_n\}} \Big]_{r_n =\Pi_{k_n}}\\ &\leq& \frac{W^*}{\varepsilon}\sqrt{n} \Big[ \E ( W^* +1) \mathbf{1}_{ \{ W^*+1 \geq r_n\}} \Big]_{r_n =\Pi_{k_n}}\\
   &\leq &\frac{W^*}{\varepsilon}  \frac{\sqrt{n}}{(\ln\Pi_{k_n})^{\lambda} } \E    (W^*+1)  \ln ^{\lambda } ( W^*+1 )
   \\ & \leq & \frac{W^*}{\varepsilon}  K_\xi n^{\frac{1}{2} -\lambda\beta}  \E    (W^*+1)  \ln ^{\lambda } ( W^*+1 ).
 \end{eqnarray*}
Then  by \eqref{cbrweq4.9} and  $ \lambda \beta >2$,   it follows that \[ \sum_{n=1}^\infty \P_{k_n} \Bigg(| \E_{\xi,k_n} \bar{A}_n | > \frac{\varepsilon}{\sqrt{n}} \Bigg )<\infty. \]

Combining Steps 1-3, we obtain \eqref{eq10}. Hence  the lemma is proved.
\end{proof}

\begin{proof}[Proof of Lemma \ref{lem2} ]
For ease of notation, set \[ D_1(t)= (1-t^2) \phi(t), \qquad \kappa_{1,n}=  \frac{s_n^{(3)} -s_{k_n}^{(3)} }{6 (s_n^2-s_{k_n}^2 )^{3/2}}.\]

 Observe that
\begin{equation}\label{cbrweq3-16}
 \mathds{B}_n=\mathds{B}_{n1}+\mathds{B}_{n2}+\mathds{B}_{n3}+\mathds{B}_{n4},
\end{equation}
where
\begin{align*}
  \mathds{B}_{n1}  &= \frac{1}{\Pi_{k_n}} \sum_{u\in\T_{k_n}} \Bigg( \E_{\xi,k_n} W_{n-k_n} (u,s_n t-
S_u)- \Phi\bigg( \frac{s_n t-S_u}{( s_n^2-s_{k_n}^2)^{1/2}}\bigg)-\kappa_{1,n} D_1\bigg( \frac{s_n t-S_u}{( s_n^2-s_{k_n}^2)^{1/2}}\bigg)\Bigg);
\\   \mathds{B}_{n2}   &  = \frac{1}{\Pi_{k_n}} \sum_{u\in\T_{k_n}}\left( \Phi\bigg( \frac{s_n t-S_u}{( s_n^2-s_{k_n}^2)^{1/2}}\bigg)-\Phi(t) \right);\\
\mathds{B}_{n3}&=\kappa_{1,n}\frac{1}{\Pi_{k_n}} \sum_{u\in\T_{k_n}} \left( D_1\bigg( \frac{s_n t-S_u}{( s_n^2-s_{k_n}^2)^{1/2}}\bigg)-D_1(t) \right);\\
\mathds{B}_{n4}&=\kappa_{1,n} D_1(t) W_{k_n}.
\end{align*}
Then the lemma will be proved once  we  show that
\begin{align}
\label{cbrweq3-17}   & \sqrt{n}\mathds{B}_{n1} \xrightarrow{n\rightarrow\infty} 0; \\
 \label{cbrweq3-18}   & \sqrt{n}\mathds{B}_{n2} \xrightarrow{n\rightarrow\infty} -(\E \sigma_0^{(2)})^{-\frac{1}{2}}\phi(t)V_1 ; \\
 \label{cbrweq3-19}   & \sqrt{n}\mathds{B}_{n3} \xrightarrow{n\rightarrow\infty} 0; \\
\label{cbrweq3-20}    & \sqrt{n}\mathds{B}_{n4} \xrightarrow{n\rightarrow\infty} \frac{1}{6}{\E \sigma_0^{(3)}}{(\E \sigma_0^{(2)})^{-\frac{3}{2}} }D_1(t) W.
\end{align}
 We will prove these results subsequently.

We  first prove \eqref{cbrweq3-17}. The proof will  mainly be based on  the  following result about  asymptotic expansion of the distribution of the sum of independent random variables:
\begin{prop}  \label{prop4.5}
 Under the hypothesis of  Theorem \ref{th1},  for a.e. $\xi$,
\begin{equation*}
\epsilon_n=n^{1/2}\sup_{x\in \R}\Bigg|\P_{\xi } \bigg(\frac{\sum_{k=k_n}^{n-1}\widehat{L}_{k}}{( s_n^2-s_{k_n}^2)^{1/2}} \leq x \bigg)-  \Phi (x) -\kappa_{1,n}D_1(x) \Bigg| \xrightarrow{n\rightarrow\infty} 0.
\end{equation*}
\end{prop}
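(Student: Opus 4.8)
The plan is to deduce Proposition~\ref{prop4.5} from the Bai--Zhao Edgeworth expansion (Lemma~\ref{lem-Edge-exp}) applied, for each fixed $\xi$, to the $m:=n-k_n$ random variables $\widehat L_{k_n},\dots,\widehat L_{n-1}$, which are independent and centred under $\P_\xi$. Taking $k=3$ in Lemma~\ref{lem-Edge-exp} leaves a single correction term $Q_{1,m}(x)m^{-1/2}$, and the first thing I would verify is that this term is \emph{exactly} $\kappa_{1,n}D_1(x)$. Indeed, the variance of the sum is $B_m^2=\sum_{k=k_n}^{n-1}\sigma_k^{(2)}=s_n^2-s_{k_n}^2$, and since the $\widehat L_k$ are centred their third cumulant equals $\sigma_k^{(3)}$, so $\lambda_{3,m}=m^{1/2}B_m^{-3}(s_n^{(3)}-s_{k_n}^{(3)})$; using $Q_{1,m}(x)=-\tfrac16\phi(x)H_2(x)\lambda_{3,m}$ together with $-H_2(x)\phi(x)=(1-x^2)\phi(x)=D_1(x)$ gives $Q_{1,m}(x)m^{-1/2}=\kappa_{1,n}D_1(x)$. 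Thus the quantity inside $\epsilon_n$ is precisely the left-hand side of the Bai--Zhao bound, and it suffices to show that $\sqrt n$ times each of the three remainder terms tends to $0$ uniformly in $x$, for a.e.\ $\xi$.

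Before estimating the remainders I would record the ergodic asymptotics valid for a.e.\ $\xi$: since $\tfrac1n s_n^2\to\E\sigma_0^{(2)}>0$ and $k_n=\lfloor n^\beta\rfloor=o(n)$, one has $B_m^2=s_n^2-s_{k_n}^2\sim n\,\E\sigma_0^{(2)}$; likewise $\sum_{k=k_n}^{n-1}\E_\xi|\widehat L_k|^q\sim n\,\E|\widehat L_0|^q$ for every $q\le\eta$, and $\delta_m$ converges to the positive constant $\E\sigma_0^{(2)}/(12\,\E|\widehat L_0|^3)$. The two moment remainders are then routine. For the $W$-term I would use $\E_\xi(|\widehat L_k|^3\mathbf 1_{\{|\widehat L_k|>T\}})\le T^{3-\eta}\E_\xi|\widehat L_k|^\eta$ with $T=B_m(1+|x|)$, which collapses the first remainder to $(1+|x|)^{-\eta}B_m^{-\eta}\sum_k\E_\xi|\widehat L_k|^\eta\lesssim n^{1-\eta/2}$, so $\sqrt n$ times it is $O(n^{3/2-\eta/2})\to0$ (here $\eta>3$ already suffices). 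The $Z$-term is bounded crudely by $B_m^{-4}\sum_k\E_\xi|\widehat L_k|^4\lesssim n^{-1}$, contributing $O(n^{-1/2})$. Since $\eta>12$, both requirements are comfortably met; note that this proposition uses only the motion hypotheses in \eqref{cbrweq2-3}.

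The real work, and the main obstacle, is the third (characteristic-function) remainder: I must produce, for a.e.\ $\xi$, a constant $c'>0$ with $\sup_{|t|\ge\delta_m}\tfrac1m\sum_{k=k_n}^{n-1}|v_k(t)|\le 1-c'$ for all large $n$, where $v_k(t)=\E_\xi e^{it\widehat L_k}$; then the bracket in Lemma~\ref{lem-Edge-exp} is at most $1-c'/2$, the term is $\lesssim m^{6}(1-c'/2)^{m}$, and exponential decay beats the $\sqrt n\,m^{6}$ prefactor. I would split $\{|t|\ge\delta_m\}$ (recall $\delta_m$ stays above a positive constant) into a large-frequency part and a compact part. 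For $|t|>T_0$ I would invoke the Cram\'er hypothesis in \eqref{cbrweq2-3}: choosing $T_0$ and $\rho_0<1$ so that the $\xi_0$-event $A=\{\sup_{|t|>T_0}|\E_{\xi_0}e^{it\widehat L_0}|\le\rho_0\}$ has $\P(A)=:p_0>0$, the ergodic theorem gives a density $p_0$ of indices $k$ with $\theta^k\xi\in A$, whence $\tfrac1m\sum_k|v_k(t)|\le p_0\rho_0+(1-p_0)<1$ uniformly in $|t|>T_0$. On the compact range $\delta_*/2\le|t|\le T_0$ I would use that $g(t):=\E|v_0(t)|$ is continuous and, since the Cram\'er set has positive probability and forces $|v_0(t)|<1$ there for $t\ne0$, satisfies $\sup_{\delta_*/2\le|t|\le T_0}g(t)<1$; the pointwise ergodic limit $\tfrac1m\sum_k|v_k(t)|\to g(t)$ is then upgraded to uniform convergence on this compact set via the uniform Lipschitz control $\bigl|\tfrac{d}{dt}\tfrac1m\sum_k|v_k(t)|\bigr|\le\tfrac1m\sum_k\E_\xi|\widehat L_k|\to\E|\widehat L_0|<\infty$ combined with a finite net. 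Combining the two ranges yields the gap $1-c'$, and assembling the three estimates proves $\epsilon_n\to0$ for a.e.\ $\xi$.
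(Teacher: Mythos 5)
Your proposal is correct and follows essentially the same route as the paper's own proof: apply the Bai--Zhao expansion of Lemma \ref{lem-Edge-exp} with $k=3$ (whose single correction term is exactly $\kappa_{1,n}D_1$), control the two moment remainders via the Birkhoff ergodic theorem, and kill the characteristic-function remainder using the Cram\'er hypothesis in \eqref{cbrweq2-3} together with ergodicity. The differences are executional rather than structural: the paper pads the sum with $k_n$ zero variables instead of working with $m=n-k_n$ summands, reduces the $W$-term by Markov's inequality to fourth moments rather than your $\eta$-th moment truncation, and compresses your two-range frequency analysis into a single application of Birkhoff to $c_j=\sup_{|t|>T}|v_j(t)|$ --- your explicit treatment of the compact range (via the lattice dichotomy, continuity of $\E|v_0(t)|$ and an equi-Lipschitz net argument) just spells out what the paper leaves implicit in the claim $\P(c_n<1)>0$.
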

\begin{proof}
Let $X_k=0$  for $0\leq k \leq k_n-1$ and  $X_k= \widehat{L}_{k}$ for $k_n\leq k \leq n-1$. Then the random variables $\{X_k\}$  are independent under $\P_\xi$. Denote by $ v_k(\cdot)$ the characteristic function of $X_k$: $ v_k(t):= \E_\xi e^{it X_k} $. Using the Markov inequality and Lemma \ref{lem-Edge-exp}, we obtain the following result:
\begin{align*}
    &\sup_{x\in \R} \Bigg|\P_{\xi } \bigg(\frac{\sum_{k=k_n}^{n-1}\widehat{L}_{k}}{( s_n^2-s_{k_n}^2)^{1/2}} \leq x \bigg)-  \Phi (x) -\kappa_{1,n}D_1(x) \Bigg|    \\
\leq  & K_\xi  \left\{ (s_n^2-s_{k_n}^2 )^{-2} \sum_{j=k_n }^{n-1} \E_\xi | \widehat{L}_{j}|^4+  n^6 \left(\sup_{|t| >T } \frac{1}{n} \bigg( k_n+ \sum_{j=k_n}^{n-1} |v_j(t)| \bigg)+ \frac{1}{2n}\right)^n\right\}.
\end{align*}
By our conditions on the environment, we know that
\begin{equation}\label{cbrweq3.18}
\lim_{n\rightarrow \infty} n  {(s_n^2-s_{k_n}^2)^{-2}} \sum_{j=k_n}^{n-1} \E_\xi |\widehat{L}_k|^4  = \E |\widehat{L}_0|^4/ (\E \sigma_0^{(2)})^2.
\end{equation}
By  \eqref{cbrweq2-3}, $\widehat{L}_n $
satisfies
\begin{equation*}
 \P\Big(  \limsup_{|t|\rightarrow\infty}|v_n(t)|<1 \Big) >0.
\end{equation*}
So there exists a  constant $c_n \leq 1$ depending on $\xi_n$  such that
\begin{equation*}
  \sup_{|t|> T} |v_n(t)| \leq c_n\quad   \mbox{ and }  \quad  \P(c_n <1) >0.
 \end{equation*}
Then $\E c_0 <1$. By the Birkhoff ergodic theorem, we have
\begin{align*}
                \sup_{|t|> T}    \bigg(  \frac{1}{n}\sum_{j=k_n}^{n-1} |v_j(t)|\bigg) &\leq  \frac{1}{n}\sum_{j=1}^{n-1} c_j \rightarrow \E c_0<1.
                                 \end{align*}
Then for $n$ large enough,
\begin{equation}\label{cbrweq3-22a}
  \left(\sup_{|t| >T } \frac{1}{n} \bigg( k_n+ \sum_{j=k_n}^{n-1}  |v_j(t)| \bigg)+ \frac{1}{2n}\right)^n=o( n^{-m}), \quad  \forall  m >0.
\end{equation}
From   \eqref{cbrweq3.18} and \eqref{cbrweq3-22a}, we get the conclusion of the proposition.
\end{proof}
From Proposition \ref{prop4.5}, it is easy to see that
\begin{equation*}
 \sqrt{n}  |\mathds{B}_{n1} | \leq  W_{k_n} \epsilon_n \xrightarrow{n\rightarrow\infty} 0.
\end{equation*}
Hence \eqref{cbrweq3-17} is proved.

We next prove \eqref{cbrweq3-18}.
Observe that
     \begin{align*}
      &\mathds{B}_{n2}   = \mathds{B}_{n21}+ \mathds{B}_{n22}+ \mathds{B}_{n23} + \mathds{B}_{n24}+\mathds{B}_{n25},    \\
       \mbox{with}~~ &\mathds{B}_{n21} = \frac{1}{\Pi_{k_n}} \sum_{u\in \T_{k_n}}   \left[\Phi \bigg( \frac{s_n t-S_u}{( s_n^2-s_{k_n}^2)^{1/2}}\bigg)  -\Phi(t) - \phi(t) \bigg(  \frac{s_n t-S_u}{( s_n^2-s_{k_n}^2)^{1/2}}- t\bigg)\right]\mathbf{1}_{\{|S_u|\leq k_n\}},\\
       &  \mathds{B}_{n22} = \frac{1}{\Pi_{k_n}} \sum_{u\in \T_{k_n}}   \left[\Phi \bigg( \frac{s_n t-S_u}{( s_n^2-s_{k_n}^2)^{1/2}}\bigg)  -\Phi(t) \right]\mathbf{1}_{\{|S_u|>k_n\}},\\
       & \mathds{B}_{n23}=-  \frac{1}{\Pi_{k_n}} \sum_{u\in \T_{k_n}}\bigg(  \frac{s_n t-S_u}{( s_n^2-s_{k_n}^2)^{1/2}}- t\bigg)\phi(t)\mathbf{1}_{\{|S_u|>k_n\}},\\
       & \mathds{B}_{n24}= \frac{1}{( s_n^2-s_{k_n}^2)^{1/2}}\big(s_n- ( s_n^2-s_{k_n}^2)^{1/2}\big) W_{k_n} \phi(t)t,\\
       & \mathds{B}_{n25}= -\frac{1}{( s_n^2-s_{k_n}^2)^{1/2}} \phi(t) N_{1,k_n}.
     \end{align*}
By  Taylor's formula and the choice of $\beta$ and $k_n$, we get
\begin{align*}
  \widetilde{\epsilon}_n=  & \sqrt{n}\sup_{|y|\leq k_n} \left| \Phi \bigg( \frac{s_n t-y}{( s_n^2-s_{k_n}^2)^{1/2}}\bigg)  -\Phi(t) - \phi(t) \bigg(  \frac{s_n t-y}{( s_n^2-s_{k_n}^2)^{1/2}}- t\bigg) \right| \\
    &  \leq \sqrt{n }\sup_{|y|  \leq k_n}\bigg| \frac{s_n t-y}{( s_n^2-s_{k_n}^2)^{1/2}}- t\bigg| ^2 \xrightarrow{n\rightarrow\infty} 0.
\end{align*}
Thus
\begin{equation}\label{cbrweq3.20}
  |\sqrt{n} \mathds{B}_{n21}| \leq   W_{k_n} \widetilde{\epsilon}_n \xrightarrow{n\rightarrow\infty} 0.
\end{equation}

We continue to prove that
\begin{equation}\label{cbrweq3.21}
 \sqrt{ n}\mathds{B}_{n22} \xrightarrow{n\rightarrow\infty} 0;  \qquad \sqrt{n}\mathds{B}_{n23}\xrightarrow{n\rightarrow\infty} 0.
\end{equation}
This will follow from the facts:
  \begin{equation}\label{cbrweq3.22}
       \frac{1}{\Pi_{k_n}} \sum_{u\in \T_{k_n}} |S_u|\mathbf{1}_{\{|S_u|>k_n\}} \xrightarrow{n\rightarrow \infty} 0  \mbox{~ a.s.};~~    \sqrt{n}  \frac{1}{\Pi_{k_n}} \sum_{u\in \T_{k_n}} \mathbf{1}_{\{S_u|>k_n\}} \xrightarrow{n\rightarrow \infty} 0 ~ \mbox{a.s.}
  \end{equation}
In order to prove \eqref{cbrweq3.22}, we firstly observe that
\begin{align*}
&\E \left( \sum_{n=1}^\infty \frac{1}{\Pi_{k_n}} \sum_{u\in \T_{k_n}} |S_u|\ind{|S_u|>k_n} \right)\\ = \ &\sum_{n=1}^\infty   \E |\widehat{S}_{k_n}| \ind{|\widehat{S}_{k_n}|>k_n }  \leq\ \sum_{n=1}^\infty  k_n^{1-\eta} \E|\widehat{S}_{k_n}|^{\eta}
   \leq\sum_{n=1}^\infty  k_n^{-\frac{\eta}{2}}  \sum_{j=0}^{k_n-1} \E |\widehat{L}_j|^{\eta}
           =\sum_{n=1}^\infty  k_n^{1-\frac{\eta}{2}}\E |\widehat{L}_0|^{\eta}, \\&
   \E \left( \sum_{n=1}^\infty \sqrt{n} \frac{1}{\Pi_{k_n}} \sum_{u\in \T_{k_n}} \ind{|S_u|>k_n} \right)
   \\ = \ &\sum_{n=1}^\infty  \sqrt{n} \E  \ind{|\widehat{S}_{k_n}|>k_n }  \leq\ \sum_{n=1}^\infty  \sqrt{n} k_n^{-\eta} \E|\widehat{S}_{k_n}|^{\eta}
   \leq\sum_{n=1}^\infty  \sqrt{n} k_n^{-\frac{\eta}{2}-1}  \sum_{j=0}^{k_n-1} \E |\widehat{L}_j|^{\eta}
           =\sum_{n=1}^\infty n^{\frac{1}{2}} k_n^{-\frac{\eta}{2}}\E |\widehat{L}_0|^{\eta}.
\end{align*}
The assumptions  on $\beta$,  $k_n$  and $\eta$ ensure that the series in the right hand side of the  above two expressions converge.
Hence $$ \sum_{n=1}^\infty \frac{1}{\Pi_{k_n}} \sum_{u\in \T_{k_n}} |S_u|\ind{|S_u|>k_n} <\infty,  \quad  \sum_{n=1}^\infty \sqrt{n} \frac{1}{\Pi_{k_n}} \sum_{u\in \T_{k_n}} \ind{|S_u|>k_n} <\infty\mbox{ ~  a.s.,} $$
which deduce  \eqref{cbrweq3.22},  and consequently,   \eqref{cbrweq3.21} is proved.

By the Birkhoff ergodic theorem,  we have
\begin{equation}\label{cbrweq3.23}
   \lim_{n\rightarrow \infty} \frac{s_n^2}{n} = \E \sigma_0^{(2)},
\end{equation}
whence   by the choice of $\beta<1/4$ and the conditions  on the environment,
\begin{equation}\label{cbrweq3.24}
   \sqrt{n} \mathds{B}_{n24}=  \frac{\sqrt{n}}{( s_n^2-s_{k_n}^2)^{1/2}}\frac{s_{k_n}^2}{s_n+ ( s_n^2-s_{k_n}^2)^{1/2}} W_{k_n} \phi(t)t\xrightarrow{n\rightarrow\infty} 0.
\end{equation}

Due to  Proposition \ref{th1a} and \eqref{cbrweq3.23},  we conclude  that
\begin{equation}\label{cbrweq3.25}
 \sqrt{n}\mathds{B}_{n25} \xrightarrow{n\rightarrow \infty}-(\E \sigma_0^{(2)})^{-\frac{1}{2}}\phi(t)V_1~~~ \mbox{a.s.}
\end{equation}

From \eqref{cbrweq3.20}, \eqref{cbrweq3.21}, \eqref{cbrweq3.24} and \eqref{cbrweq3.25}, we derive \eqref{cbrweq3-18}.

Now we turn to the proof of \eqref{cbrweq3-19}.

According to the hypothesis of Theorem \ref{th1},  it follows from the Birkhoff ergodic theorem that
\begin{equation}\label{cbrweq3-26}
   \lim_{n\rightarrow \infty } \sqrt{n}\kappa_{1,n}= \frac{1}{6} (\E \sigma_0^{(2)})^{-3/2} \E \sigma_0^{(3)}.
\end{equation}
Notice that
   \begin{eqnarray*}
       && \left|\frac{1}{\Pi_{k_n}} \sum_{u\in\T_{k_n}} \left( D_1\bigg( \frac{s_n t-S_u}{( s_n^2-s_{k_n}^2)^{1/2}}\bigg)-D_1(t) \right)\right|   \\
       &\leq &  \frac{2}{\Pi_{k_n}} \sum_{u\in\T_{k_n}} \mathbf{1}_{\{|S_u|>k_n\}} +   \frac{1}{\Pi_{k_n}} \sum_{u\in\T_{k_n}} \left| D_1\bigg( \frac{s_n t-S_u}{( s_n^2-s_{k_n}^2)^{1/2}}\bigg)-D_1(t) \right|\mathbf{1}_{\{|S_u|\leq k_n\}}.
   \end{eqnarray*}
The first term in the last expression above tends to 0  a.s. by \eqref{cbrweq3.22}, and the second one tends to 0 a.s. because the martingale $\{W_n\}$ converges and
\begin{equation*}
  \sup_{|y|\leq k_n} \left|D_1\Bigg( \frac{s_n t-y}{( s_n^2-s_{k_n}^2)^{1/2}}\Bigg)-D_1(t)\right | \xrightarrow{n\rightarrow \infty} 0 .
\end{equation*}
Combining the above results, we obtain \eqref{cbrweq3-19}.

It remains to prove \eqref{cbrweq3-20}, which is  immediate from  \eqref{cbrweq3-26} and the fact $W_n\xrightarrow{n\rightarrow\infty} W$.

So   Lemma \ref{lem2} has been proved.
\end{proof}
\begin{proof}[Proof of Lemma \ref{lem3}]
This lemma  follows  from  the following result given in \cite{HuangLiu}.
\begin{prop}[ \cite{HuangLiu} ]\label{pro3}
Assume the condition  \eqref{cbrweq1}.
 Then
\begin{equation*}
W-W_n=o(n^{-\lambda})\qquad a.s.
\end{equation*}
\end{prop}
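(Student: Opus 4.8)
The plan is to exploit the self-similar structure of the population martingale. Writing $W^{(u)}$ for the limit of the normalized population in the subtree rooted at a particle $u$ with $|u|=n$, the decomposition \eqref{cbrweq11} gives, upon letting the horizon tend to infinity,
\[
W-W_n=\frac{1}{\Pi_n}\sum_{u\in\T_n}\bigl(W^{(u)}-1\bigr),
\]
where, conditionally on $\F_n$ and $\xi$, the variables $\{W^{(u)}\}_{u\in\T_n}$ are independent, each distributed as $W$ under $\P_{\theta^n\xi}$, with $\E_{\xi}W^{(u)}=1$. Thus the problem reduces to showing that this centered sum of conditionally i.i.d. terms is $o(n^{-\lambda})$ a.s. The engine of the argument is a weighted $L\log L$ moment transfer: under \eqref{cbrweq1} one has $\E W(\ln^+W)^{\lambda}<\infty$ (a Kesten--Stigum-type moment, compare Lemma \ref{lem5}; note the loss of one logarithmic power relative to the offspring condition \eqref{cbrweq1}), which in turn yields the tail estimate $\E\bigl(W\mathbf{1}_{\{W>t\}}\bigr)=o\bigl((\ln t)^{-\lambda}\bigr)$ as $t\to\infty$.

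Next I would run a truncation argument in the spirit of the decomposition \eqref{eqn-decomposition}, but aimed at a rate rather than mere convergence, and conclude by an extended Borel--Cantelli lemma as in the proof of Lemma \ref{lem1}. Fixing $\varepsilon>0$, I would bound $\P_{\xi}\bigl(n^{\lambda}|W-W_n|>\varepsilon\bigr)$ by three contributions arising from splitting each $W^{(u)}$ at a truncation level $t_n$ on the scale of $\Pi_n$: (i) the event that some $W^{(u)}$ exceeds $t_n$, controlled in expectation by $W_n\Pi_n\,\P_{\theta^n\xi}(W>t_n)$; (ii) the centered truncated sum, controlled through a conditional moment inequality; and (iii) the compensator term, which up to lower order equals $n^{\lambda}W_n\,\E\bigl(W\mathbf{1}_{\{W>t_n\}}\bigr)$. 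The random environment enters only through $\Pi_n$, and the a.s. asymptotics $\ln\Pi_n\sim n\,\E\ln m_0$ (cf. \eqref{cbrweq4.9}) convert every factor $(\ln t_n)^{-\lambda}$ into $n^{-\lambda}$; in particular the compensator (iii) is $W_n\cdot n^{\lambda}o(n^{-\lambda})\to0$ a.s., which is precisely where the strict $o$ in the conclusion originates.

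The main obstacle is the borderline nature of the summability in steps (i) and (ii): raising $t_n$ makes the exceedance term (i) summable but inflates the truncated second (or fractional) moment governing (ii), while lowering $t_n$ does the reverse, and with a single truncation scale the two requirements turn out to be mutually incompatible at the critical exponent $\lambda$. Overcoming this requires a finer device --- either a multi-scale (dyadic) truncation of the $W^{(u)}$ combined with a Fuk--Nagaev or von Bahr--Esseen inequality of order $p\in(1,2)$, or a direct analysis of the maximal function $W^{*}=\sup_nW_n$ --- together with careful bookkeeping of the environment-dependent normalizations $\Pi_n(\theta^k\xi)$ and constants $K_\xi$. This delicate balancing, sharpened to extract exactly the exponent $\lambda$, is the technical heart of the estimate and is the content of \cite{HuangLiu}; I would follow that route, invoking the weighted moment transfer above to keep the quenched constants under control.
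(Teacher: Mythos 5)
The paper gives no internal proof of this proposition: it is stated verbatim as a quoted result of \cite{HuangLiu}, and your proposal likewise defers its technical heart (the borderline truncation/summability analysis at the critical exponent $\lambda$) to that same reference, so the two treatments coincide in the only way that matters. Your preliminary reduction --- the subtree decomposition $W-W_n=\Pi_n^{-1}\sum_{u\in\T_n}\bigl(W^{(u)}-1\bigr)$ with conditionally independent terms, the weighted $L\log L$ moment transfer, and truncation at the scale $\Pi_n$ so that $\ln\Pi_n\sim n\,\E\ln m_0$ converts logarithmic tails into the rate $n^{-\lambda}$ --- is a sound sketch of how such an estimate is proved, but, exactly as the paper does, you are in the end citing \cite{HuangLiu} rather than supplying an independent proof.
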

By the choice of $\beta$ and $ k_n$, we see that
\begin{equation*}
   \sqrt{n}(W-W_{k_n}) =o(n^{\frac{1}{2} -\lambda\beta}) \xrightarrow{n\rightarrow\infty} 0.
\end{equation*}
\end{proof}
Now  Theorem \ref{th1} follows from the decomposition \eqref{cbrweq12} and Lemmas \ref{lem1} -- \ref{lem3}.
\section{Proof of Theorem \ref{th2}}\label{sec4}

We will  follow the similar procedure as in the proof of Theorem \ref{th1}.


%

We remind that $\lambda,\eta>16 $  in the current setting.  Hereafter we will choose  $\max \{ \frac{4}{\lambda}, \frac{4}{\eta}\}<\beta <\frac{1}{4} $   and let $k_n= \lfloor n^{\beta}\rfloor $ (the integral part of $n^{\beta}$).

By \eqref{cbrweq11}, we have
\begin{equation}\label{cbrweq4-1}
 \sqrt{2\pi} s_n \Pi_n^{-1} Z_n(A) -W \int_A \exp \{ -\frac{x^2}{2s_n^2}\} dx =  \Lambda_{1,n} +\Lambda_{2,n }+\Lambda_{3,n} ,
\end{equation}
\begin{eqnarray*}
 \mbox{~~ with ~~ } \Lambda_{1,n}  &=&   \sqrt{2\pi} s_n \Pi_{k_n}^{-1} \sum_{u\in \T_{k_n}} \bigg( W_{n-k_n} (u, A-S_u) -  \E_{\xi,k_n} W_{n-k_n} (u, A-S_u) \bigg);   \\
  \Lambda_{2,n}  &=&  \Pi_{k_n}^{-1} \sum_{u\in \T_{k_n}} \bigg ( \sqrt{2\pi} s_n  \E_{\xi,k_n} W_{n-k_n} (u, A-S_u)  - \int_A \exp \{ -\frac{x^2}{2s_n^2}\} dx \bigg ); \\   \Lambda_{3,n}  &=& ( W_{k_n} -W)  \int_A \exp \{ -\frac{x^2}{2s_n^2}\} dx .
\end{eqnarray*}
On basis of this decomposition, we shall divide the proof of Theorem \ref{th2} into the following lemmas.
\begin{lem}\label{lem4-1}
Under the hypothesis of  Theorem \ref{th2},  a.s.
\begin{equation}\label{eq4-6}
 {n}\Lambda_{1,n} \xrightarrow{n \rightarrow \infty } 0.
\end{equation}
\end{lem}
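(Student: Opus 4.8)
The plan is to repeat the argument of Lemma \ref{lem1} almost verbatim, the only differences being the larger normalisation (we multiply by $n$ instead of $\sqrt n$, and carry the extra deterministic factor $\sqrt{2\pi}\,s_n$) and the replacement of the half-line $(-\infty,s_nt-S_u]$ by the bounded set $A-S_u$. For $|u|=k_n$ put
\[
 X_{n,u}=W_{n-k_n}(u,A-S_u)-\E_{\xi,k_n}W_{n-k_n}(u,A-S_u),\qquad \bar X_{n,u}=X_{n,u}\ind{|X_{n,u}|<\Pi_{k_n}},
\]
and $\bar\Lambda_{1,n}=\sqrt{2\pi}\,s_n\,\Pi_{k_n}^{-1}\sum_{u\in\T_{k_n}}\bar X_{n,u}$. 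The key observation is that the set $A$ is irrelevant at this stage: since $0\le W_{n-k_n}(u,A-S_u)\le W_{n-k_n}(u)$ and $\E_{\xi,k_n}W_{n-k_n}(u,A-S_u)\le\E_{\xi,k_n}W_{n-k_n}(u)=1$, the crude bound $|X_{n,u}|\le W_{n-k_n}(u)+1$ of Lemma \ref{lem1} remains valid. Moreover the $\bar X_{n,u}$, $u\in\T_{k_n}$, are independent under $\P_{\xi,k_n}$ and $\E_{\xi,k_n}X_{n,u}=0$.

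By the extended Borel--Cantelli lemma it is enough to show that for every $\varepsilon>0$,
\[
 \sum_{n=1}^\infty\P_{k_n}\big(|n\Lambda_{1,n}|>2\varepsilon\big)<\infty\quad\text{a.s.},
\]
and I would bound the summand, as in Lemma \ref{lem1}, by the three probabilities $\P_{k_n}(\Lambda_{1,n}\neq\bar\Lambda_{1,n})$, $\P_{k_n}(|n(\bar\Lambda_{1,n}-\E_{\xi,k_n}\bar\Lambda_{1,n})|>\varepsilon)$ and $\P_{k_n}(|n\E_{\xi,k_n}\bar\Lambda_{1,n}|>\varepsilon)$. Since $\{\Lambda_{1,n}\neq\bar\Lambda_{1,n}\}=\{\exists u\in\T_{k_n}:|X_{n,u}|\ge\Pi_{k_n}\}$, the first probability is unaffected by the factor $\sqrt{2\pi}\,s_n$, and Step 1 of Lemma \ref{lem1} gives, via Lemma \ref{lem5} and \eqref{cbrweq4.9}, the bound $K_\xi W^*n^{-\lambda\beta}\E(W^*+1)(\ln(W^*+1))^\lambda$, which is summable because $\lambda\beta>4$.

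For the centred term, conditional independence together with $s_n^2\le K_\xi n$ yields, after the conditional Chebyshev inequality,
\[
 n^2\E_{k_n}\mathrm{Var}_{\xi,k_n}(\bar\Lambda_{1,n})\le 2\pi\,n^2 s_n^2\,\Pi_{k_n}^{-1}W_{k_n}\,\E_{k_n}\bar X_{n,u}^2\le K_\xi\,n^3\,\Pi_{k_n}^{-1}W_{k_n}\,\E_{k_n}\bar X_{n,u}^2,
\]
where I used that $\E_{k_n}\bar X_{n,u}^2$ is common to the $\Pi_{k_n}W_{k_n}$ summands. Inserting the estimate of $\E_{k_n}\bar X_{n,u}^2$ from Step 2 of Lemma \ref{lem1} (with $b\in(1,e^{\E\ln m_0})$) splits the right-hand side into terms of order $n^3 b^{k_n}/\Pi_{k_n}$, $n^3(k_n\ln b)^{-\lambda}$ and $n^3/\Pi_{k_n}$; the middle one is of order $n^{3-\lambda\beta}$. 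The third probability is handled by the Markov inequality and $|X_{n,u}|\le W_{n-k_n}(u)+1$ as in Step 3, giving $K_\xi\varepsilon^{-1}W^*n^{3/2-\lambda\beta}\E(W^*+1)(\ln(W^*+1))^\lambda$.

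The only real point --- and the reason Theorem \ref{th2} requires strictly stronger hypotheses than Theorem \ref{th1} --- is the bookkeeping of exponents: the normalisation $n$ combined with $s_n$ of order $\sqrt n$ turns the exponents $n^{1-\lambda\beta}$, $n^{1/2-\lambda\beta}$ of Lemma \ref{lem1} into $n^{3-\lambda\beta}$, $n^{3/2-\lambda\beta}$, so summability now forces $\lambda\beta>4$. This is exactly guaranteed by the standing choice $\beta>4/\lambda$ (together with $\beta<1/4$, whence $\lambda>16$), and it is the variance term that is the most restrictive. All the remaining geometric terms (those carrying $b^{k_n}/\Pi_{k_n}$ or $\Pi_{k_n}^{-1}$) are negligible: since $k_n\sim n^\beta$ and $\Pi_{k_n}$ grows like $e^{k_n\E\ln m_0}$ with $b<e^{\E\ln m_0}$, they decay like $e^{-cn^\beta}$. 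Hence all three series converge a.s., and the extended Borel--Cantelli lemma yields \eqref{eq4-6}.
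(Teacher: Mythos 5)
Your proposal is correct and is precisely what the paper does: its proof of Lemma \ref{lem4-1} consists of the single remark that one repeats the argument of Lemma \ref{lem1} ``with minor changes in scaling,'' and your write-up supplies exactly those changes, with the right bookkeeping (the bound $|X_{n,u}|\le W_{n-k_n}(u)+1$ surviving the replacement of the half-line by $A-S_u$, the extra factor $n\sqrt{2\pi}\,s_n\sim n^{3/2}$ turning the exponents $n^{1-\lambda\beta}$, $n^{1/2-\lambda\beta}$ into $n^{3-\lambda\beta}$, $n^{3/2-\lambda\beta}$, and the variance term forcing $\lambda\beta>4$, which the standing choice $\beta>4/\lambda$ guarantees).
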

\begin{lem}\label{lem4-2} Under the hypothesis of  Theorem \ref{th2},  a.s.
\begin{multline}\label{eq4-7}
 {n} \Lambda_{2,n}  \xrightarrow{n \rightarrow \infty }  ( \E \sigma_0^{(2)})^{-1}(\frac{1}{2} V_2 +\overline{x}_A V_1) |A|+ \frac{1}{2}{\E \sigma_0^{(3)} } ( \E \sigma_0^{(2)})^{-2}(V_1-\overline{x}_A  W )|A| \\
~+\frac{1}{8}  (\E \sigma_0^{(2)} )^{-2}\E(\sigma_0^{(4)}-3(\sigma_0^{(2)} )^2 ) W |A| - \frac{5}{24}( \E \sigma_0^{(2)} )^{-3}(\E\sigma_0^{(3)})^2 W |A| .
\end{multline}
\end{lem}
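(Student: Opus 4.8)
The plan is to follow the proof of Lemma \ref{lem2}, but to replace the distribution--function expansion (Proposition \ref{prop4.5}) by a \emph{local} (density) Edgeworth expansion, carried one order further, since here the normalising factor is $n$ and not $\sqrt n$. First I would turn the conditional expectation into a probability: given $\D_{k_n}$, the displacements of the descendants of $u$ are independent of $S_u$ and distributed as under $\theta^{k_n}\xi$, so that
\[
\E_{\xi,k_n}W_{n-k_n}(u,A-S_u)=\P_\xi\Big(\sum_{j=k_n}^{n-1}\widehat L_j\in A-S_u\Big)=\int_A p_n(z-S_u)\,dz,
\]
where $p_n$ is the density of $\sum_{j=k_n}^{n-1}\widehat L_j$ under $\P_\xi$ and $B_n^2:=s_n^2-s_{k_n}^2$ is its variance. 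With the normalised cumulants $\mu_{3,n}=(s_n^{(3)}-s_{k_n}^{(3)})B_n^{-3}$ and $\mu_{4,n}=\big(\sum_{j=k_n}^{n-1}(\sigma_j^{(4)}-3(\sigma_j^{(2)})^2)\big)B_n^{-4}$, the key analytic input is the density analogue of Proposition \ref{prop4.5}: for a.e.\ $\xi$,
\[
n\,\sup_{y\in\R}\Big|B_n p_n(y)-\phi(w)\big[1+\tfrac{\mu_{3,n}}{6}H_3(w)+\tfrac{\mu_{4,n}}{24}H_4(w)+\tfrac{\mu_{3,n}^2}{72}H_6(w)\big]\Big|\xrightarrow{n\to\infty}0,\qquad w=y/B_n,
\]
which I would derive from the Bai--Zhao bound (Lemma \ref{lem-Edge-exp}) applied to densities, using the Cram\'er condition in \eqref{cbrweq2-3}, the moment bound $\E|\widehat L_0|^\eta<\infty$ with $\eta>16$, the Birkhoff ergodic theorem for the cumulant sums, and the choice $\beta>4/\eta$.

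Granting this, I would split $n\Lambda_{2,n}$ into three parts: (i) the error from the above expansion, which is dominated by $n\,W_{k_n}\sqrt{2\pi}(s_n/B_n)|A|$ times the supremum above and hence tends to $0$; (ii) a truncation remainder over $\{|S_u|>k_n\}$, handled exactly as \eqref{cbrweq3.22} via the moment condition on $\widehat L$ and $\eta\beta>4$; and (iii) the main part, obtained on $\{|S_u|\le k_n\}$ by substituting the expansion and Taylor--expanding in the small quantity $w_u(z)=(z-S_u)/B_n$ (note $|w_u(z)|\lesssim k_n/B_n\to0$). Writing $\sqrt{2\pi}s_nB_n^{-1}\phi(w_u(z))=\tfrac{s_n}{B_n}e^{-w_u(z)^2/2}$ and expanding $\tfrac{s_n}{B_n}=(1-s_{k_n}^2/s_n^2)^{-1/2}$ together with the Gaussian and the Hermite factors, I would collect the terms surviving after multiplication by $n$.

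The decisive bookkeeping runs as follows. The pure Gaussian part, through $\Pi_{k_n}^{-1}\sum_{u}S_u=N_{1,k_n}$ and $\Pi_{k_n}^{-1}\sum_u S_u^2=s_{k_n}^2W_{k_n}-N_{2,k_n}$, yields the first group $(\E\sigma_0^{(2)})^{-1}(\tfrac12V_2+\overline{x}_AV_1)|A|$; here the apparently divergent $O(k_n)$ contributions carried by $s_{k_n}^2W_{k_n}$ cancel against the expansion of the prefactor $\tfrac{s_n}{B_n}$ and the comparison integral $\int_A e^{-x^2/(2s_n^2)}\,dx$, the remainder being $o(1)$ by $\beta<1/2$. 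The $\mu_{3,n}H_3$ term contributes the second group: since $H_3(0)=0$, its leading part is the linear term $-3w_u$, whose $z$--integral and $u$--average give $\tfrac12\E\sigma_0^{(3)}(\E\sigma_0^{(2)})^{-2}(V_1-\overline{x}_AW)|A|$ after using $n B_n^{-4}(s_n^{(3)}-s_{k_n}^{(3)})\to\E\sigma_0^{(3)}(\E\sigma_0^{(2)})^{-2}$. Finally $H_4(0)=3$ and $H_6(0)=-15$ turn the $\mu_{4,n}H_4$ and $\mu_{3,n}^2H_6$ terms into the third and fourth groups, $\tfrac18(\E\sigma_0^{(2)})^{-2}\E(\sigma_0^{(4)}-3(\sigma_0^{(2)})^2)W|A|$ and $-\tfrac{5}{24}(\E\sigma_0^{(2)})^{-3}(\E\sigma_0^{(3)})^2W|A|$, using $n\mu_{4,n}\to\E(\sigma_0^{(4)}-3(\sigma_0^{(2)})^2)(\E\sigma_0^{(2)})^{-2}$ and $n\mu_{3,n}^2\to(\E\sigma_0^{(3)})^2(\E\sigma_0^{(2)})^{-3}$. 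Throughout, the martingale averages pass to the limit by Propositions \ref{th1a} and \ref{th2a} ($N_{1,k_n}\to V_1$, $N_{2,k_n}\to V_2$) and $W_{k_n}\to W$, while the cumulant ratios use the Birkhoff ergodic theorem as in \eqref{cbrweq3.23}; the various sub--leading Taylor terms are all $O(k_n/n^2)$ and vanish after $\times n$.

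The main obstacle will be two--fold. First, establishing the local Edgeworth expansion uniformly in $y$ and a.s.\ in $\xi$ with a genuine $o(n^{-1})$ error — one order deeper than Proposition \ref{prop4.5}, which is precisely what forces the stronger hypotheses $\lambda,\eta>16$ and the narrow window $\max\{4/\lambda,4/\eta\}<\beta<\tfrac14$. Second, the delicate cancellation of the divergent $O(k_n)$ terms, which requires expanding the prefactor $s_n/B_n$, the Gaussian, and the Hermite factors consistently to the same order before averaging over $u$; keeping track of exactly which terms survive multiplication by $n$ is the bulk of the (routine but lengthy) computation.
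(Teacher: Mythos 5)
Your overall architecture (expansion error $+$ truncation over $\{|S_u|>k_n\}$ $+$ Taylor bookkeeping of the Gaussian, $H_3$, $H_4$ and $H_6$ contributions, passing to the limit via $N_{1,k_n}\to V_1$, $N_{2,k_n}\to V_2$, $W_{k_n}\to W$ and the Birkhoff ergodic theorem) parallels the paper's decomposition of $\Lambda_{2,n}$ into $\Lambda_{2,n1},\dots,\Lambda_{2,n7}$, and your constants come out correctly, including the cancellation of the $O(k_n)$ terms and the values $H_4(0)=3$, $H_6(0)=-15$. However, the key analytic input you rely on is a genuine gap: under the hypotheses of Theorem \ref{th2}, the sum $\sum_{j=k_n}^{n-1}\widehat L_j$ need not possess a density $p_n$ at all. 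Condition \eqref{cbrweq2-3} only assumes Cram\'er's condition with \emph{positive probability} together with a moment bound; the laws $G(\xi_n)$ may be singular (and, for the environments in the exceptional set, even lattice), and no finite convolution of such laws is forced to be absolutely continuous. Your plan to derive the local expansion by ``applying the Bai--Zhao bound to densities'' cannot be carried out: Lemma \ref{lem-Edge-exp} is a statement about distribution functions, and any density version would require integrability of the characteristic function $\prod_{j=k_n}^{n-1}|v_j(t)|$ of the sum. Cram\'er's condition only yields $\sup_{|t|>T}|v_j(t)|\le c_j$ with $\E c_0<1$, a bound that does not decay as $|t|\to\infty$, so $\int_{|t|>T}\prod_j |v_j(t)|\,dt$ may diverge and $p_n$ may simply fail to exist; hence the statement you call the ``key analytic input'' is not merely unproved, it is not even well posed in this generality.

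This is exactly the difficulty the paper's proof is engineered to avoid: Proposition \ref{pro4} is an Edgeworth expansion for the \emph{distribution function}, with error $o(n^{-3/2})$ uniformly in $x$, and the comparison term inside $\Lambda_{2,n1}$ is written as the integral over $A$ of the \emph{derivatives} $\phi$, $D'_\nu$, $R'_n$ of the expansion functions, so that the discrepancy is controlled by the Kolmogorov-distance bound $\epsilon_n$ without ever postulating a density for the walk. To repair your argument you would either need to add an absolute-continuity (or characteristic-function integrability) assumption on $G(\xi_0)$, which the theorem does not grant, or revert to the integrated form of the expansion as the paper does; with that replacement, the rest of your computation — the splitting into error, truncation and main terms, and the identification of the four limiting groups — would go through essentially as you describe.
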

\begin{lem}\label{lem4-3} Under the hypothesis of  Theorem \ref{th2},  a.s.
\begin{equation}\label{eq4-8}
{n}\Lambda_{3,n} \xrightarrow{n \rightarrow \infty } 0.
\end{equation}
\end{lem}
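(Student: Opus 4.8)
The plan is to treat $n\Lambda_{3,n}$ as the product of a uniformly bounded deterministic factor and the martingale remainder $W_{k_n}-W$, so that the whole statement reduces to a convergence-rate estimate for the natural martingale $\{W_n\}$, exactly as in the proof of Lemma \ref{lem3} but with the heavier normalisation $n$ in place of $\sqrt n$.

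First I would dispose of the integral factor. Since $\exp\{-x^2/(2s_n^2)\}\le 1$ for every $x\in\R$ and every $n$, we have the uniform bound
\begin{equation*}
0<\int_A \exp\Big\{-\frac{x^2}{2s_n^2}\Big\}\,dx\le |A|<\infty ,
\end{equation*}
whence $|n\Lambda_{3,n}|\le |A|\,n\,|W_{k_n}-W|$. Thus it suffices to prove that $n\,(W_{k_n}-W)\to 0$ a.s.

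For this I would invoke the convergence-rate result for the natural martingale already used for Lemma \ref{lem3}, namely Proposition \ref{pro3} of \cite{HuangLiu}: under \eqref{cbrweq1} one has $W-W_n=o(n^{-\lambda})$ a.s. Applying it along the subsequence $k_n=\lfloor n^{\beta}\rfloor$ gives $W-W_{k_n}=o\big(k_n^{-\lambda}\big)=o\big(n^{-\beta\lambda}\big)$ a.s., so that
\begin{equation*}
n\,(W_{k_n}-W)=o\big(n^{\,1-\beta\lambda}\big)\qquad\text{a.s.}
\end{equation*}

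Finally I would read off the sign of the exponent from the choice of $\beta$ specific to Theorem \ref{th2}. There we take $\beta>4/\lambda$, so $\beta\lambda>4>1$ and hence $1-\beta\lambda<0$; consequently $n\,(W_{k_n}-W)\to 0$ a.s., which combined with the bound above yields $n\Lambda_{3,n}\to 0$ a.s. There is essentially no obstacle in this lemma: the only point that needs attention is that the extra factor $n$ (rather than $\sqrt n$ in Lemma \ref{lem3}) now forces the requirement $\beta\lambda>1$ in place of $\beta\lambda>1/2$, and this is precisely the additional room provided by the stronger standing hypothesis $\lambda>16$ of Theorem \ref{th2} through the admissible range $\beta>4/\lambda$.
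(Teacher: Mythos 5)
Your proof is correct and follows essentially the same route as the paper: both invoke Proposition \ref{pro3} ($W-W_n=o(n^{-\lambda})$ a.s.), apply it along $k_n=\lfloor n^\beta\rfloor$, and use $\lambda\beta>4$ to kill the polynomial factor, with the integral over $A$ trivially bounded by $|A|$. The only cosmetic difference is that the paper records the slightly stronger bound $n^{3/2}(W-W_{k_n})=o(n^{3/2-\lambda\beta})\to 0$, while you work with the factor $n$ actually required by the statement.
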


Now we go to prove the lemmas subsequently.

\begin{proof}[Proof of Lemma \ref{lem4-1}]
The proof of   Lemma \ref{lem4-1}  follows the same procedure as that of  Lemma \ref{lem1} with minor changes in scaling. We omit the details.
\end{proof}

\begin{proof}[Proof of Lemma \ref{lem4-2}]
We start the proof by introducing  some notation: set
\begin{align*}
  \kappa_{1, n}   =  &{\frac{1}{6} (s_n^2-s_{k_n}^2 )^{-3/2} }{(s_n^{(3)} - s_{k_n}^{(3)}) }, \quad \kappa_{2, n}  =   \frac{1}{72} (s_n^2-s_{k_n}^2 )^{-3} (s_n^{(3)} - s_{k_n}^{(3)})^2, \\
  \kappa_{3, n}  = &\frac{ 1}{24} (s_n^2-s_{k_n}^2 )^{-2} \sum_{j=k_n}^{n-1} \Big(\sigma_j^{(4)} -3\big(\sigma_j^{(2)}\big)^2 \Big).
  \end{align*}
  Define for $x\in \R$,
  \begin{align*}
 D_1(x)= &-H_2(x)\phi(x) ,  ~  D_2(x)= -H_5(x)\phi(x), ~
  D_3(x)= -H_3(x) \phi(x),\\
 R_n(x) = &-\frac{\Big(s_n^{(3)}-s_{k_n}^{(3)} \Big)^3}{1296 (s_n^2-s_{k_n}^2)^{9/2}} H_8(x) \phi(x) -\frac{\sum_{j=k_n}^{n-1} \Big(\sigma_j^{(5)} -10\sigma_j^{(3)}\sigma_j^{(2)} \Big)}{120 (s_n^2-s_{k_n}^2)^{5/2}}H_4(x)  \phi(x)\\ & -\frac{ \Big(s_n^{(3)}-s_{k_n}^{(3)}\Big ) \sum_{j=k_n}^{n-1} \Big(\sigma_j^{(4)} -3\big(\sigma_j^{(2)}\big)^2  \Big)} { 144  (s_n^2-s_{k_n}^2)^{7/2}}H_6(x)  \phi(x),
\end{align*}
where $H_m$ are  Chebyshev-Hermite polynomials defined in \eqref{eqCH}.
 We decompose $\Lambda_{2,n}$ into 7 terms:
 \begin{align}\label{eq4-14}
  \Lambda_{2,n}=&\Lambda_{2,n1} +  \Lambda_{2,n2}+\Lambda_{2,n3}+\Lambda_{2,n4} +\Lambda_{2,n5}+\Lambda_{2,n6} + \Lambda_{2,n7},
  \end{align}
where
   \begin{align*}
  \Lambda_{2,n1}=&~\sqrt{2\pi} s_n \Pi_{k_n}^{-1} \sum_{u\in \T_{k_n}}   \Bigg[ \E_{\xi,k_n} W_{n-k_n} (u, A-S_u) - \int_A\Bigg(  \phi\bigg( \frac{x-S_u}{({s_n^2-s_{k_n}^2})^{1/2}}\bigg) \\
 \nonumber    & ~+ \sum_{\nu=1}^3\kappa_{\nu,n} D'_\nu \bigg( \frac{x-S_u}{({s_n^2-s_{k_n}^2})^{1/2}}\bigg) +R'_n\bigg( \frac{x-S_u}{({s_n^2-s_{k_n}^2})^{1/2}}\bigg) \Bigg)  \frac{ dx}{({s_n^2-s_{k_n}^2})^{1/2}} \Bigg] , \\
 \nonumber \Lambda_{2,n2} =&  ~ \Pi_{k_n}^{-1} \sum_{u\in \T_{k_n}} \mathbf{1}_{\{|S_u|\leq k_n\}} \int_A \bigg[  \frac{s_n}{({s_n^2-s_{k_n}^2})^{1/2}} \exp\{-\frac{(x-S_u)^2}{2({s_n^2-s_{k_n}^2})} \}- \exp\{-\frac{x^2}{2s_n^2}\} \bigg] dx , \\
 \nonumber\Lambda_{2,n3} =&~\frac{ \sqrt{2\pi}\kappa_{1,n}s_n}{({s_n^2-s_{k_n}^2})^{1/2}} \Pi_{k_n}^{-1} \sum_{u\in \T_{k_n}} \mathbf{1}_{\{|S_u|\leq k_n\}} \int_A D'_1 \bigg( \frac{x-S_u}{({s_n^2-s_{k_n}^2})^{1/2}}\bigg) dx,\\
  \nonumber\Lambda_{2,n4} =& ~\frac{ \sqrt{2\pi}\kappa_{2,n}s_n}{({s_n^2-s_{k_n}^2})^{1/2}} \Pi_{k_n}^{-1} \sum_{u\in \T_{k_n}}\mathbf{1}_{\{|S_u|\leq k_n\}} \int_A D'_2 \bigg( \frac{x-S_u}{({s_n^2-s_{k_n}^2})^{1/2}}\bigg) dx,\\
 \nonumber \Lambda_{2,n5} =& ~ \frac{ \sqrt{2\pi}\kappa_{3,n}s_n}{({s_n^2-s_{k_n}^2})^{1/2}} \Pi_{k_n}^{-1} \sum_{u\in \T_{k_n}} \mathbf{1}_{\{|S_u|\leq k_n\}} \int_A D'_3 \bigg( \frac{x-S_u}{({s_n^2-s_{k_n}^2})^{1/2}}\bigg) dx,\\
 \nonumber \Lambda_{2,n6} =& ~ \frac{ \sqrt{2\pi}s_n}{({s_n^2-s_{k_n}^2})^{1/2}}\Pi_{k_n}^{-1} \sum_{u\in \T_{k_n}} \mathbf{1}_{\{|S_u|\leq k_n\}}  \int_A R'_n \bigg( \frac{x-S_u}{({s_n^2-s_{k_n}^2})^{1/2}}\bigg) dx, \\
 \nonumber\Lambda_{2,n7}= &~ ~ \frac{ \sqrt{2\pi}s_n}{({s_n^2-s_{k_n}^2})^{1/2}} \Pi_{k_n}^{-1} \sum_{u\in \T_{k_n}}
 \Bigg ( \int_A \bigg( \phi \bigg( \frac{x-S_u}{({s_n^2-s_{k_n}^2})^{1/2}}\bigg)+R_n\bigg( \frac{x-S_u}{({s_n^2-s_{k_n}^2})^{1/2}}\bigg) \\
  \nonumber  &~~+\sum_{\nu=1}^3\kappa_{\nu,n} D'_\nu \bigg( \frac{x-S_u}{({s_n^2-s_{k_n}^2})^{1/2}}\bigg) - \Big({1- \frac{s^2_{k_n}}{s^2_n}}\Big)^{1/2}  \phi(x/s_n)  \bigg)dx \Bigg )\mathbf{1}_{\{|S_u|>k_n\}} .
 \end{align*}
The lemma will follow once we prove that a.s.
\begin{align}
\label{eq4-15}
    & n \Lambda_{2,n1} \xrightarrow{n \rightarrow \infty } 0,   \\
\label{eq4-16}  &  n \Lambda_{2,n2} \xrightarrow{n \rightarrow \infty } ( \E \sigma_0^{(2)})^{-1}(\frac{1}{2} V_2 +\overline{x}_A V_1) |A|,    \\
\label{eq4-17} &  n \Lambda_{2,n3} \xrightarrow{n \rightarrow \infty } \frac{1}{2}{\E \sigma_0^{(3)} } ( \E \sigma_0^{(2)})^{-2}(V_1-\overline{x}_A  W )|A|,  \\
\label{eq4-18} &  n \Lambda_{2,n4} \xrightarrow{n \rightarrow \infty } - \frac{5}{24}( \E \sigma_0^{(2)} )^{-3}(\E\sigma_0^{(3)})^2 W |A|, \\
\label{eq4-19} &  n \Lambda_{2,n5} \xrightarrow{n \rightarrow \infty }  \frac{1}{8}  (\E \sigma_0^{(2)} )^{-2}\E(\sigma_0^{(4)}-3(\sigma_0^{(2)})^2  ) W |A|,\\
\label{eq4-20}
    & n \Lambda_{2,n6} \xrightarrow{n \rightarrow \infty } 0,\\
    \label{eq4-13}
    & n \Lambda_{2,n7} \xrightarrow{n \rightarrow \infty } 0.
\end{align}

The proof of \eqref{eq4-15} is based on the  following result on the asymptotic expansion of the distribution of the sum of independent random variables:
\begin{prop}\label{pro4} Under the hypothesis of  Theorem \ref{th2},  for a.e. $\xi$,
\begin{equation*}
\epsilon_n=n^{3/2}\sup_{x\in \R}\Bigg|\P_{\xi} \Bigg(\frac{\sum_{k=k_n}^{n-1}\widehat{L}_{k}}{( s_n^2-s_{k_n}^2)^{1/2}} \leq x \Bigg)-  \Phi (x) - \sum_{\nu=1}^{3} \kappa_{\nu,n}D_\nu(x) -R_n(x) \Bigg| \xrightarrow{n\rightarrow\infty} 0.
\end{equation*}
\end{prop}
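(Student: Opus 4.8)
The plan is to derive Proposition \ref{pro4} from the Bai--Zhao Edgeworth expansion (Lemma \ref{lem-Edge-exp}), carrying the analysis of the proof of Proposition \ref{prop4.5} one order further. As there, I would set $X_k=0$ for $0\le k<k_n$ and $X_k=\widehat L_k$ for $k_n\le k\le n-1$; these are independent under $\P_\xi$, with normalising variance $B_n^2:=\sum_k\E_\xi X_k^2=s_n^2-s_{k_n}^2$. Since $k_n=\lfloor n^\beta\rfloor=o(n)$, the Birkhoff ergodic theorem gives $B_n^2\sim n\,\E\sigma_0^{(2)}$, so $B_n\asymp n^{1/2}$. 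Whereas the proof of Proposition \ref{prop4.5} needed only the first Edgeworth correction (order $n^{-1/2}$), and hence $k=3$ in Lemma \ref{lem-Edge-exp}, here I would apply the lemma with $k=5$, retaining the corrections of orders $n^{-1/2}$, $n^{-1}$ and $n^{-3/2}$, so that the remainder is $o(n^{-3/2})$ and survives multiplication by $n^{3/2}$.

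The first task is to identify the Edgeworth polynomials furnished by the lemma with the explicit functions $\sum_{\nu=1}^3\kappa_{\nu,n}D_\nu+R_n$. Since $l_j=0$, the cumulants of $\widehat L_j$ are $\gamma_{2j}=\sigma_j^{(2)}$, $\gamma_{3j}=\sigma_j^{(3)}$, $\gamma_{4j}=\sigma_j^{(4)}-3(\sigma_j^{(2)})^2$ and $\gamma_{5j}=\sigma_j^{(5)}-10\sigma_j^{(3)}\sigma_j^{(2)}$, whose partial sums $\sum_{j=k_n}^{n-1}\gamma_{\nu j}$ are precisely the numerators appearing in $\kappa_{\nu,n}$ and $R_n$. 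Expanding the sums over the partitions $(k_1,\dots,k_\nu)$ in the definition of $Q_{\nu n}$, inserting $B_n^2=s_n^2-s_{k_n}^2$ for the normalising variance, and converting derivatives of $\Phi$ via $\Phi^{(m+1)}=(-1)^m\phi H_m$, one checks term by term that the $\nu=1$ correction equals $\kappa_{1,n}D_1$, the $\nu=2$ correction (from the partitions $(0,1)$ and $(2,0)$, carrying $\lambda_{4,n}$ and $\lambda_{3,n}^2$) equals $\kappa_{3,n}D_3+\kappa_{2,n}D_2$, and the $\nu=3$ correction (from $(0,0,1)$, $(1,1,0)$, $(3,0,0)$, carrying $\lambda_{5,n}$, $\lambda_{3,n}\lambda_{4,n}$ and $\lambda_{3,n}^3$) equals the three summands of $R_n$. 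This bookkeeping is routine and deterministic; no randomness enters it.

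It then remains to show that $n^{3/2}$ times the Bai--Zhao remainder tends to $0$ uniformly in $x$, for a.e. $\xi$. The two moment pieces are handled by truncation: since $\eta>16>5$, using $\E_\xi|X_j|^5\mathbf{1}_{\{|X_j|>B_n(1+|x|)\}}\le (B_n(1+|x|))^{5-\eta}\E_\xi|\widehat L_j|^\eta$ (and the analogous bound for the sixth-moment piece), together with $(1+|x|)^{-\eta}\le1$ and $\sum_{j=k_n}^{n-1}\E_\xi|\widehat L_j|^\eta\sim n\,\E|\widehat L_0|^\eta$, both pieces are $O(n^{1-\eta/2})$ uniformly in $x$; multiplied by $n^{3/2}$ they are $O(n^{5/2-\eta/2})\to0$. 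The characteristic-function piece is treated exactly as in the proof of Proposition \ref{prop4.5}: here $\delta_n=\tfrac1{12}B_n^2(\sum_j\E_\xi|Y_{nj}|^3)^{-1}$ stays bounded away from $0$ (numerator and denominator are both $\asymp n$), and by Cram\'er's condition \eqref{cbrweq2-3} together with the Birkhoff ergodic theorem one gets, for a.e. $\xi$, $\sup_{|t|\ge\delta_n}\tfrac1n\sum_{j=k_n}^{n-1}|v_j(t)|\le\tfrac1n\sum_{j=1}^{n-1}c_j\to\E c_0<1$; since also $k_n/n\to0$, the base of the $n$-th power is eventually $\le\rho<1$, so this piece decays geometrically and dominates both the polynomial factor $n^{15}$ and the extra factor $n^{3/2}$.

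The main obstacle is twofold. Analytically, the delicate point is the characteristic-function estimate, i.e. producing a \emph{uniform} bound strictly below $1$ for the averaged modulus of the characteristic functions over the range $|t|\ge\delta_n$; this rests on combining Cram\'er's condition with the ergodicity of the environment exactly as in Proposition \ref{prop4.5}, and is the step most sensitive to the hypotheses. The second, more mechanical, difficulty is the exact algebraic identification of the abstract Edgeworth polynomials with $\sum_{\nu=1}^3\kappa_{\nu,n}D_\nu+R_n$, which must be performed carefully because any error in the coefficients would propagate into the final constants of Theorem \ref{th2}.
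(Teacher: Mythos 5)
Your proposal follows essentially the same route as the paper's own proof: the same auxiliary variables $X_k$ (zero for $k<k_n$, $\widehat L_k$ afterwards), the same application of the Bai--Zhao expansion (Lemma \ref{lem-Edge-exp}) with $k=5$ --- which is exactly where the paper's factor $n^{k(k+1)/2}=n^{15}$ comes from --- the same Birkhoff-ergodic asymptotics for the moment remainder, and the same geometric treatment of the characteristic-function term via Cram\'er's condition plus ergodicity, i.e.\ the paper's \eqref{cbrweq3-22a}. Your cumulant bookkeeping ($\gamma_{3j}=\sigma_j^{(3)}$, $\gamma_{4j}=\sigma_j^{(4)}-3(\sigma_j^{(2)})^2$, $\gamma_{5j}=\sigma_j^{(5)}-10\sigma_j^{(3)}\sigma_j^{(2)}$, and the partition-by-partition identification of $Q_{1,n}$, $Q_{2,n}$, $Q_{3,n}$ with $\kappa_{1,n}D_1$, $\kappa_{2,n}D_2+\kappa_{3,n}D_3$, and the three summands of $R_n$) is correct, and is in fact more explicit than the paper, which leaves this identification implicit in its definitions of $\kappa_{\nu,n}$, $D_\nu$ and $R_n$.

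One step, however, is wrong as stated: the ``analogous bound for the sixth-moment piece.'' For the truncated variables $Z_{nj}^{(x)}=X_j\mathbf{1}_{\{|X_j|\le B_n(1+|x|)\}}$, the inequality $\E_\xi|Z_{nj}^{(x)}|^6\le \bigl(B_n(1+|x|)\bigr)^{6-\eta}\E_\xi|\widehat L_j|^\eta$ fails: on the event $\{|X_j|\le B_n(1+|x|)\}$ one has $|X_j|^{6-\eta}\ge \bigl(B_n(1+|x|)\bigr)^{6-\eta}$ because the exponent $6-\eta$ is negative, so the inequality runs the wrong way. Trading a moment of order $p$ for one of order $\eta>p$ against the truncation level works only on the tail event (your bound for $W_{nj}^{(x)}$, with $p=5$, is correct for exactly this reason); on the truncation event it would require $\eta<6$. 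Hence your claimed rate $O(n^{1-\eta/2})$ for that piece is unjustified. The repair is immediate, and it is precisely what the paper does (``by Markov's inequality''): since $\eta>16>6$, bound $\E_\xi|Z_{nj}^{(x)}|^6\le\E_\xi|\widehat L_j|^6$ and $(1+|x|)^{-6}\le 1$, so this piece is at most $(s_n^2-s_{k_n}^2)^{-3}\sum_{j=k_n}^{n-1}\E_\xi|\widehat L_j|^6$, which by \eqref{cbrweq3-21} is $O(n^{-2})=o(n^{-3/2})$. With this one-line correction your argument coincides with the paper's proof.
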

\begin{proof}
   Let $X_k=0$  for $0\leq k \leq k_n-1$ and  $X_k= \widehat{L}_{k}$ for $k_n\leq k \leq n-1$. Then the random variables $\{X_k\}$  are independent under $P_\xi$. By Markov's inequality and  Lemma \ref{lem-Edge-exp} we obtain the following result:
\begin{align*}
    &\sup_{x\in \R} \Bigg|\P_{\xi} \bigg(\frac{\sum_{k=k_n}^{n-1}\widehat{L}_{k}}{{( s_n^2-s_{k_n}^2)^{1/2}}} \leq x \bigg)-  \Phi (x) - \sum_{\nu=1}^{3} \kappa_{\nu,n}D_\nu(x) -R_n(x)\Bigg|    \\
\leq  & K_\xi  \left\{ (s_n^2-s_{k_n}^2 )^{-3} \sum_{j=k_n }^{n-1} \E_\xi | L_{j}|^6+  n^{15} \left(\sup_{|t| >T } \frac{1}{n} \left( k_n+ \sum_{j=k_n}^{n-1} |v_j(t)| \right)+ \frac{1}{2n}\right)^n\right\}.
\end{align*}
By our conditions on the environment, we know that
\begin{equation}\label{cbrweq3-21}
\lim_{n\rightarrow \infty} n^{2}  {(s_n^2-s_{k_n}^2)^{-3}} \sum_{j=k_n}^{n-1} \E_\xi |\widehat{L}_k|^6  = \E |\widehat{L}_0|^6/ (\E \sigma_0^{(2)})^{3}.
\end{equation}
The required proposition  concludes from \eqref{cbrweq3-21} and \eqref{cbrweq3-22a}.
\end{proof}
Using Proposition \ref{pro4}, we deduce  that
\begin{align*}
  |n\Lambda_{2,n1}|  & \leq  {\sqrt{2\pi} s_n}{n^{-\frac{1}{2}}} W_{k_n} \epsilon_n \xrightarrow{n \rightarrow \infty } 0,
\end{align*}
and  \eqref{eq4-15} is proved.

Next we turn to the proof of \eqref{eq4-16}.
Using Taylor's expansion and the boundedness of  the set $A$, together with  the choice of $ \beta$ and $k_n$, we get that
\begin{equation*}
     \frac{s_n}{({s_n^2-s_{k_n}^2})^{1/2}} \exp\{-\frac{(x-y)^2}{2({s_n^2-s_{k_n}^2})} \}- \exp\{-\frac{x^2}{2s_n^2}\}= \frac{1}{2(s_n^2-s_{k_n}^2) } \big(s_{k_n}^2- y^2 +2 xy  + o(1)\big ),
\end{equation*}
uniformly for all $|y| \leq k_n$  and  $x\in A $ as $n\rightarrow \infty$.
By the same arguments as in the  proof of  \eqref{cbrweq3.22}, we can show that for $\eta>16$, with  $ \beta, k_n$ chosen above,
\begin{equation}\label{cbrweq4.18}
  n \Pi_{k_n}^{-1} \sum_{u\in \T_{k_n}} \mathbf{1}_{\{|S_u|>k_n\}} \xrightarrow{n\rightarrow \infty} 0 \quad \mbox{ and }\qquad \Pi_{k_n}^{-1} \sum_{u\in \T_{k_n}}S_u^2 \mathbf{1}_{\{|S_u|>k_n\}} \xrightarrow{n\rightarrow \infty} 0 \quad \mbox{ a.s.}
\end{equation}
Therefore  as $n$  tends to infinity, we have a.s.
 \begin{align*}
            n\Lambda_{2,n2}  = ~&  n \frac{1}{2(s_n^2-s_{k_n}^2) } \bigg(|A| \Pi_{k_n}^{-1} \sum_{u\in \T_{k_n}} (s_{k_n}^2- S_u^2 ) \mathbf{1}_{\{|S_u|\leq k_n\}} \\&  + 2  \int_A x dx\Pi_{k_n}^{-1} \sum_{u\in \T_{k_n}} S_u \mathbf{1}_{\{|S_u|\leq k_n\}}  +o(1)  \bigg)   \\
              = ~ &   \frac{n}{2(s_n^2-s_{k_n}^2) } \big( N_{2,k_n}|A| + 2|A|\overline{x}_A N_{1,k_n} +o(1) \big) \\
              =  ~ & (2\E \sigma_0^{(2)})^{-1}  (V_2  +2 \overline{x}_A  V_1) |A|  +o(1),
          \end{align*}
which proves \eqref{eq4-16}.

To prove \eqref{eq4-17}, we observe that
\begin{align*}
     & ~\Lambda_{2,n3}=  \frac{ \kappa_{1,n}s_n}{({s_n^2-s_{k_n}^2})^{1/2}} \Pi_{k_n}^{-1} \sum_{u\in \T_{k_n}} \mathbf{1}_{\{|S_u|\leq k_n\}} \int_A \bigg( \frac{(x-S_u)^{3}}{({s_n^2-s_{k_n}^2})^{3/2}}- \frac{3 (x-S_u)}{({s_n^2-s_{k_n}^2})^{1/2}} \bigg) e^{-\frac{(x-S_u)^2}{2({s_n^2-s_{k_n}^2})}}   dx    \\
  & ~ ~~~~~~=\Lambda_{2,n31}+\Lambda_{2,n32}+\Lambda_{2,n33}+\Lambda_{2,n34},
  \end{align*}
{with}  \begin{align*} & \Lambda_{2,n31}=~ \frac{ \kappa_{1,n}s_n}{({s_n^2-s_{k_n}^2})^{1/2}} \Pi_{k_n}^{-1} \sum_{u\in \T_{k_n}}  \mathbf{1}_{\{|S_u|\leq k_n\}} \int_A  \frac{(x-S_u)^{3}}{({s_n^2-s_{k_n}^2})^{3/2}}   e^{-\frac{(x-S_u)^2}{2({s_n^2-s_{k_n}^2})}}   dx ;   \\
&\Lambda_{2,n32}=~ \frac{ \kappa_{1,n}s_n}{({s_n^2-s_{k_n}^2})^{1/2}} \Pi_{k_n}^{-1} \sum_{u\in \T_{k_n}} \mathbf{1}_{\{|S_u|\leq k_n\}}  \int_A \frac{3 (x-S_u)}{({s_n^2-s_{k_n}^2})^{1/2}} \bigg (1- e^{-\frac{(x-S_u)^2}{2({s_n^2-s_{k_n}^2})}} \bigg)  dx ;\\
&\Lambda_{2,n33}=~- \frac{ \kappa_{1,n}s_n}{({s_n^2-s_{k_n}^2})^{1/2}} \Pi_{k_n}^{-1} \sum_{u\in \T_{k_n}} \int_A \frac{3 (x-S_u)}{({s_n^2-s_{k_n}^2})^{1/2}} dx;\\
&\Lambda_{2,n34}=~ \frac{ \kappa_{1,n}s_n}{({s_n^2-s_{k_n}^2})^{1/2}} \Pi_{k_n}^{-1} \sum_{u\in \T_{k_n}}  \mathbf{1}_{\{|S_u|> k_n\}} \int_A \frac{3 (x-S_u)}{({s_n^2-s_{k_n}^2})^{1/2}}   dx.
\end{align*}
It is clear  that \begin{align*}
                               & n|\Lambda_{2,n31}|\leq   \frac{n \kappa_{1,n}s_n}{({s_n^2-s_{k_n}^2})^{2}} \int_A (|x|+ k_n)^3 dx W_{k_n} \xrightarrow{n\rightarrow \infty} 0 \mbox{ a.s.},  \\
                               &  n|\Lambda_{2,n32}|\leq \frac{n \kappa_{1,n}s_n}{({s_n^2-s_{k_n}^2})^{2}} \int_A \frac{3}{2}(|x|+ k_n)^3 dx W_{k_n} \xrightarrow{n\rightarrow \infty} 0 \mbox{ a.s.  }  \quad  (  1- e^{-x} \leq x,  \mbox{ for } x>0  ),\\
                               &  n\/ \Lambda_{2,n33}~=   \frac{n (s_n^{(3)}- s_{k_n}^{(3)})s_n}{6({s_n^2-s_{k_n}^2})^{5/2}}\cdot 3|A|(N_{1,k_n}- \overline{x}_A W_{k_n}) \\
                                 &~~~~~~~~~~~\xrightarrow{n\rightarrow \infty}   \frac{1}{2}{\E \sigma_0^{(3)} } ( \E \sigma_0)^{-2}(V_1-\overline{x}_A  W )|A|  \mbox{ a.s.}, \\
                               &  n|\Lambda_{2,n34}|\leq \frac{3n \kappa_{1,n}s_n}{({s_n^2-s_{k_n}^2})} \bigg( \int_A|x|dx \Pi_{k_n}^{-1} \sum_{u\in \T_{k_n}}  \mathbf{1}_{\{|S_u|> k_n\}} + |A|\Pi_{k_n}^{-1} \sum_{u\in \T_{k_n}}  |S_u|\mathbf{1}_{\{|S_u|> k_n\}} \bigg) \\
                               & ~~~~~~~~~~~\xrightarrow{n\rightarrow \infty} 0 \mbox{ a.s.  }   (\mbox{ by }  \eqref{cbrweq3.22}),
                            \end{align*}
whence \eqref{eq4-17} follows.

By the Birkhoff ergodic theorem, we see that
\begin{equation}\label{eq4-21}
   \lim_{n\rightarrow \infty} \frac{ n \kappa_{2,n}s_n}{({s_n^2-s_{k_n}^2})^{1/2}} = \frac{(\E \sigma_0^{(3)})^2}{72(\E \sigma_0^{(2)})^{3} },  \quad \lim_{n\rightarrow \infty} \frac{ n\kappa_{3,n}s_n}{({s_n^2-s_{k_n}^2})^{1/2}} = \frac{\E(\sigma_0^{(3)} -3(\sigma_0^{(2)})^2 )}{24(\E \sigma_0^{(2)})^{2} }.
\end{equation}
Elementary calculus shows that,  uniformly for $|y|\leq k_n$
   \begin{align}\label{eq4-22}
    \mbox{ if }     \nu\geq 1,    \quad          &  \int_A  \bigg( \frac{x-y}{({s_n^2-s_{k_n}^2})^{1/2}}\bigg)^\nu \exp\bigg(-\frac{(x-y)^2}{2({s_n^2-s_{k_n}^2})}\bigg)dx  \xrightarrow{n\rightarrow \infty} 0 \mbox{ a.s.  }, \\
  \label{eq4-23} \mbox{ and }  ~~~~~~ &  \int_A  \exp\bigg(-\frac{(x-y)^2}{2({s_n^2-s_{k_n}^2})}\bigg)dx   \xrightarrow{n\rightarrow \infty} 1 \mbox{ a.s.  }
                     \end{align}
Combining \eqref{cbrweq4.18},\eqref{eq4-21}, \eqref{eq4-22} and \eqref{eq4-23},  we deduce  \eqref{eq4-18}   and  \eqref{eq4-19}.

By the Birkhoff ergodic theorem and the definition of $H_m(x)$ and $\phi(x)$, we see that
\begin{equation*}
\sup_{x\in \R} |R_n'(x) |=  O(\frac{1}{n^{3/2}}),
\end{equation*}
whence \eqref{eq4-20} follows.

Finally because  $|\Lambda_{2,n7}| $ is bounded by $K_\xi\cdot\Pi_{k_n}^{-1} \sum_{u\in \T_{k_n}} \mathbf{1}_{\{|S_u|>k_n\}}$,
\eqref{cbrweq4.18} implies \eqref{eq4-13}.

So the required result \eqref{eq4-7} follows from \eqref{eq4-15} -- \eqref{eq4-13}.
\end{proof}

\begin{proof}[Proof of Lemma \ref{lem4-3}]
By  Proposition \ref{pro3} ,  under our assumption, we have
\begin{equation*}
W-W_n=o(n^{-\lambda})\qquad a.s.
\end{equation*}
By the choice of $\beta$ and $ k_n$, we see that
\begin{equation*}
  n^{ \frac{3}{2}}(W-W_{k_n}) =o(n^{\frac{3}{2} -\lambda\beta}) \xrightarrow{n\rightarrow\infty} 0.
\end{equation*}

\end{proof}


\section*{Acknowledgments}

The authors are  very grateful to the reviewers for the very valuable remarks and comments  which lead to a significant improvement of our original manuscript.
The work has benefited from a visit of Q. Liu to the School of Mathematical Sciences, Beijing Normal University, and a visit of Z. Gao to Laboratoire de Math\'ematiques de Bretagne Atlantique,
Universit\'e  de Bretagne-Sud. The  hospitality and support of both universities
have been well appreciated.


\end{document}